\documentclass{article}
\usepackage{graphicx} %

\title{Solving Stochastic Fixed-Point Equations with High Probability}
\author{Jelena Diakonikolas\\
Department of Computer Sciences\\
University of Wisconsin-Madison\\
\texttt{jelena@cs.wisc.edu}}
\date{}

\usepackage[margin=1in]{geometry}
 \usepackage{times}
 \usepackage{amsmath,amsfonts,amsthm,bm}

\usepackage{dsfont}
\usepackage{mathtools}
\usepackage[sort]{cite}

\usepackage[colorlinks,urlcolor=blue,citecolor=blue,linkcolor=blue]{hyperref}
 \usepackage{color}

\usepackage{cleveref}

\usepackage{thmtools}
\usepackage{thm-restate}

\usepackage{algorithm}
\usepackage[noend]{algpseudocode}

\usepackage{graphicx}
\usepackage{subcaption}

\newtheorem{theorem}{Theorem}
\newtheorem{lemma}{Lemma}
\newtheorem{corollary}{Corollary}

\newtheorem{proposition}{Proposition}
\newtheorem{assumption}{Assumption}
\newtheorem{fact}{Fact}

\newtheorem{definition}{Definition}

\def\1{\bm{1}}

\def\eps{{\epsilon}}
\def\teps{{\widetilde{\varepsilon}}}

\def\one{{\mathds{1}}}

\def\lambdab{{\overline{\lambda}}}

\def\vzero{{\bm{0}}}

\def\vb{{\bm{b}}}

\def\vx{{\bm{x}}}

\def\vy{{\bm{y}}}

\def\vxi{{\bm{\xi}}}
\def\veta{{\bm{\eta}}}
\def\vetah{\hat{\bm{\eta}}}
\def\etah{\hat{\eta}}
\def\vtau{{\bm{\tau}}}
\def\vsigma{{\bm{\sigma}}}

\def\vxh{\hat{\bm{x}}}

\def\gammab{\overline{\gamma}}

\def\cldel{\mathrm{Cl}^{\gammab}\Delta}
\def\cl{\mathrm{Cl}^{\gammab}}

\def\Rt{\widetilde{R}}

\def\mF{{\bm{F}}}

\def\mS{{\bm{S}}}
\def\mT{{\bm{T}}}
\def\mTt{\widetilde{\bm{T}}}
\def\mTh{\widehat{\bm{T}}}

\newcommand\norm[1]{\left\| #1 \right\|}

\def\cD{{\mathcal{D}}}
\def\cE{{\mathcal{E}}}
\def\cF{{\mathcal{F}}}

\def\cH{{\mathcal{H}}}

\def\cM{{\mathcal{M}}}

\def\cO{{\mathcal{O}}}

\def\sP{{\mathbb{P}}}

\def\sR{{\mathbb{R}}}

\newcommand{\E}{\mathbb{E}}

\DeclareMathOperator*{\argmin}{arg\,min}

\usepackage[colorinlistoftodos]{todonotes}

\newcommand{\prob}{\sP}

\newcommand{\dd}{\mathrm{d}}

\begin{document}

\maketitle

\begin{abstract}
    We study stochastic fixed-point equations $\mT(\vx) = \vx$ over normed spaces $(\cE, \norm{\cdot})$, where the operator $\mT$ is nonexpansive or contractive and is accessed only through unbiased stochastic evaluations with bounded second central moment. Given $\eps > 0, \delta \in (0, 1)$, the goal is to output $\vx \in \cE$ such that  $\norm{\mT(\vx) - \vx} \leq \eps$ with probability at least $1-\delta$. We introduce VR-GHAL, a variance-reduced gradual Halpern method for quadratically smoothable Banach spaces. The key algorithmic ingredient is a recursive stochastic estimator based on clipped differences of oracle evaluations: instead of clipping $\vtau(\vx; \vxi)$ itself, we clip stochastic differences at the Lipschitz scale 
$\gamma\norm{\vx - \vy}$. This makes the estimator pathwise Lipschitz along the algorithmic trajectory while permitting martingale concentration under finite second moments in the native norm. Our main theorem gives an anytime high-probability residual bound: on a single event of probability at least $1 - \delta$, the residual decreases nearly geometrically across epochs, up to lower-order logarithmic factors. Under only bounded variance, displaying only the dependence on the target error $\eps$ and Lipschitz constant $\gamma \in (0, 1]$ of $\mT,$ the resulting oracle complexity is $\min\{\eps^{-5}, (1-\gamma)^{-3}\eps^{-2}\}$. Under a Lipschitz-in-expectation oracle, the dependence improves to the corresponding $\eps^{-3}$
 nonexpansive rate (i.e., for $\gamma = 1$), and under samplewise nonexpansiveness to $\eps^{-2}$. %
\end{abstract}

\section{Introduction}

Fixed-point operator equations $\mT(\vx) = \vx$ are pervasive across scientific disciplines. For instance, they are central to game theory and mathematical programming \cite{facchinei2007finite,yannakakis2009equilibria,chen2025computing,todd2013computation,BauschkeCombettes2017,ParikhBoyd2014}, they arise as primitives in equilibrium problems describing solutions to partial differential equations (PDEs) central to scientific computing \cite{Dudson2025FreeGS,Amorisco2024,liu2021parallel,elman2026surrogate}, and appear in both classical (e.g., reinforcement learning \cite{Bellman1957,Puterman1994,BertsekasTsitsiklis1996,SuttonBarto2018}) and emerging (e.g., deep equilibrium models \cite{bai2019deq} and consistency models \cite{song2023consistency}) areas of machine learning. In many such settings, access to exact operator $\mT$ evaluations is not available due to either computational considerations or the statistical nature of the problem; instead, the operator is accessed through  noisy simulations, sampling, or data. Concrete examples include (i) Bellman equations in reinforcement learning (RL) and stochastic dynamic programming \cite{Bellman1957,Puterman1994,BertsekasTsitsiklis1996,SuttonBarto2018}, (ii) deep equilibrium models and
related implicit-layer architectures in deep learning \cite{bai2019deq, elghaoui2021implicit,
winston2020monotone}, which define their output as the solution to a fixed-point equation solved through a noisy (minibatch) oracle, and (iii) a broad range of applications in extreme-scale scientific computing, such as, for example,  self-consistent field calculations used across materials science and chemistry \cite{ko2023stochastic,kohn1965self}.

Motivated by these applications, we consider the problem of computing an approximate fixed point of an
operator $\mT : \cE \to \cE$ on a real normed vector space $(\cE, \norm{\cdot})$: 
\begin{equation}\label{eq:fp}
  \text{find } \vx \in \cE \text{ such that } \vx = \mT(\vx),
\end{equation}
where $\mT$ is accessed only through an unbiased  \emph{stochastic} oracle $\vtau(\vx;\vxi)$ with bounded variance (more precisely, bounded second central moment), and where $\vxi \sim \cD$ models the underlying randomness from an unknown distribution $\cD$. In particular, we assume
\begin{equation}\label{eq:stochastic-opt-assmpts}
    \E_{\vxi \sim \cD}[\vtau(\vx; \vxi)] = \mT(\vx), \quad \E_{\vxi \sim \cD}[\|\vtau(\vx; \vxi) - \mT(\vx)\|^2] \leq \sigma^2, \quad \forall \vx \in \cE.
\end{equation}
We study such problems in standard settings where the unknown operator $\mT$ is $\gamma$-Lipschitz with $\gamma \in (0, 1]$, meaning that for any $\vx, \vy \in \cE,$ we have $\norm{\mT(\vx) - \mT(\vy)} \leq \gamma \norm{\vx - \vy}$. In particular, the operator is assumed to be either $\gamma$-contractive ($\gamma \in (0,1)$) or nonexpansive ($\gamma = 1$) with respect to
$\norm{\cdot}$. Our goal is to return a point $\vxh$ whose
fixed-point residual $\norm{\mT(\vxh) - \vxh}$\footnote{For contractive operators, $\eps$ fixed-point residual implies $\frac{\eps}{1-\gamma}$ distance to the fixed point. For nonexpansive operators, bounding the distance to fixed points is impossible in general without additional assumptions; see \cite[Theorem 2]{chen2025computing}.} is at most a target $\eps > 0$ \emph{with high probability}, i.e., with probability at least $1-\delta$ for a prescribed confidence $\delta \in (0,1)$, using as few stochastic oracle queries as possible.

There are three requirements for solving such stochastic fixed-point equations that are both natural and, in combination, largely open. First, the aforementioned high-probability guarantee should incur (poly-)logarithmic dependence on the inverse confidence parameter $1/\delta$ in the sample and computational complexities of utilized algorithms to be truly a ``high probability'' guarantee. The reason is that polynomial dependence on $1/\delta$ implied by in-expectation guarantees via Markov inequality leads to either excessively large sample complexity for truly high-probability guarantees, or otherwise forces failure probability $\delta$ to be large, which can hide  heavy upper tails in the fixed-point residual. 

Second, both variance and contraction should be measured in the \emph{native} norm $\norm{\cdot}$ of the space, rather than in $\ell_2$. Operators arising in applications are frequently nonexpansive in a norm other than $\ell_2$---most prominently the $\ell_\infty$ norm in RL  \cite{Bellman1957,Puterman1994,BertsekasTsitsiklis1996,SuttonBarto2018}. Beyond finite-dimensional $\ell_\infty$ problems, fixed-point formulations are a standard language for nonlinear operator equations on function spaces, including differential and integral equations; stochastic fixed-point and splitting methods in Hilbert and Banach spaces likewise model settings where only noisy operator evaluations are available \cite{Zei86,Dei85,CP15,RVV16,mou2022optimal}. 
Observe here that for finite-dimensional $\ell_p$ spaces with $p > 2,$ classical inequalities relating $\ell_p$ norms give $\norm{\cdot}_p \leq \norm{\cdot}_2 \leq d^{\frac{1}{2}-\frac{1}{p}}\norm{\cdot}_p,$ where $d$ is the dimension of the space. Thus, the variance measured in the $\ell_2$ norm can be larger by factors polynomial in the dimension. 
This is precisely the regime one wants to avoid for large state spaces (as in RL applications) and which precludes infinite-dimensional spaces altogether. 

Finally, 
 we ask for a \emph{parameter-free, anytime}
algorithm: beyond the confidence parameter $\delta$, total number of iterations/epochs $K,$ and the Lipschitz constant $\gamma$, the method should require no knowledge of problem parameters such as the variance proxy or the distance to a solution, and the guarantee on the residual should hold for whatever number of epochs the algorithm is run, without fixing the target error $\eps$ upfront. 

\subsection{Technical Overview \& Contributions}\label{sec:contributions}

This paper introduces a new variance reduction strategy that satisfies all three requirements listed above, while leading to the (sample/stochastic oracle) complexity guarantees on par with the lowest available in the literature \cite{bravo2024stochastic} (which apply in expectation, under bounded $\ell_2$ variance, explicitly assume that the iterates remain in a bounded set, and are not parameter-free as defined above). The algorithmic methodology we employ builds upon the Gradual Halpern Algorithm (GHAL), recently introduced in \cite{diakonikolas2025pushing}. GHAL uses, as a subroutine, an iteration defined by:
\begin{equation}\label{eq:main-iteration}\tag{FixHal}
    \vx_{k+1} = \lambda \vx_0 + (1-\lambda) \mT(\vx_k).%
\end{equation}
There are two ways to interpret this iteration. First is as a special case of classical Halpern iteration $\vx_{k+1} = \lambda_k \vx_0 + (1-\lambda_k) \mTt(\vx_k)$ \cite{halpern1967fixed}, where instead of a decreasing schedule, the step size $\lambda_k \in (0, 1)$ is forced to be constant. The second is as solving a ``regularized'' problem, applying Banach-Picard iteration to the operator $\mF(\vx) = \lambda \vx_0 + (1-\lambda)\mT(\vx).$ Observe that if $\mT$ is $\gamma$-Lipschitz, then $\mF$ is $(1-\lambda)\gamma$-Lipschitz (more contractive). 

GHAL alternates between decreasing $\lambda$ by a fixed constant factor and running \eqref{eq:main-iteration} until the fixed-point residual decreases by a constant factor. In that sense, it can be viewed as a Halpern-style iteration with a gradual step size decrease, or as an inexact proximal point (or resolvent)-style method with a geometrically decreasing regularization schedule. Regarding the latter interpretation, it is worth noting that in the present context of fixed-point equations and unlike in minimization contexts where proximal point-based strategies are traditionally used, this methodology applies to arbitrary normed spaces and even generalizes to metric spaces with a convex metric; see \cite{diakonikolas2025pushing}. 

Our approach is based on using GHAL with a novel variance-reduced estimator for $\mT$ introduced in this work. We note here that simply bounding the error of the estimator and plugging it into the existing analysis of GHAL is insufficient here. The reason is that GHAL (or its adaptive version AdaGHAL)  relies on the algorithm iterates remaining in a bounded set (with AdaGHAL adaptively estimating its diameter), which is guaranteed for nonexpansive operators under \emph{exact} operator evaluations as none of the updates can increase the iterate distance to the operator fixed points. However, under \emph{inexact/stochastic} operator evaluations, it is unclear how to guarantee that the iterates remain in a bounded set without imposing additional assumptions on the problem. This issue is particularly pronounced when the operator is nonexpansive (or when its contraction factor is close to one), due to the inability to rely on contraction to correct for stochastic errors. The reason is that the stochastic operator evaluations are not necessarily nonexpansive or even Lipschitz continuous. Controlling stability and boundedness of stochastic iterates is a classical difficulty in stochastic approximation for nonexpansive maps and Q-learning \cite{abounadi2002stochastic}, and it also appears explicitly in recent stochastic fixed-point analyses \cite{bravo2024fixedpoint,bravo2024stochastic}.

The first technical contribution is showing that convergence of GHAL can be guaranteed without knowledge or explicit estimation of the iterate distance to fixed points (or any bound on the diameter of the set containing the algorithm iterates), while remaining robust to (sufficiently slow) iterate drifts away from the fixed points. This is achieved through an intricate inductive argument ensuring that the iterate drift is offset by the progress in reducing the fixed-point residual. Thus, even if the stochastic operator were to be replaced by an exact one, the provided analysis would still be new, as it does not require either knowledge (as in GHAL) or estimation (as in AdaGHAL) of the diameter of the set containing the iterates.%

A core contribution of this paper is the design of a new variance-reduced operator (\Cref{sec:VR+master-thm}). The broad structure of the estimator is built on recursive variance reduction of SARAH/SPIDER type, originally developed for stochastic gradient estimation to solve minimization problems \cite{nguyen2017sarah,fang2018spider}. This estimator is compatible with the GHAL architecture, as both GHAL and SARAH/SPIDER estimators operate in epochs. In particular, high accuracy estimation can naturally coincide with the GHAL epoch starts, while the estimation based on the differences of minibatch stochastic estimators can be combined with the analysis of \eqref{eq:main-iteration} over the epoch, which tracks successive iterate distances. The key idea in our work is the use of a \emph{clipped difference} estimator. Rather than clipping stochastic operator values $\vtau(\vx;\vxi)$ directly (as is common in the literature employing gradient clipping in optimization \cite{gorbunov2020clipping,nguyen2023clippedsgd,cutkosky2021highprob,zhang2022parameter,sadiev2023highprob}), we estimate differences of operator evaluations between successive iterates $\vy_{j-1}, \vy_j$; namely
\(
    \mT(\vy_j)-\mT(\vy_{j-1}), 
\) 
through stochastic differences
\(
    \Delta(\vy_j,\vy_{j-1})
    =
    \vtau(\vy_j;\vxi_1)-\vtau(\vy_{j-1};\vxi_2),
\)
and clip these differences at a radius dictated by the Lipschitz bound on $\mT$:
\(
    \gamma \|\vy_j-\vy_{j-1}\|.
\) 
This choice is crucial. It makes the stochastic estimator behave, pathwise, like a Lipschitz operator along the algorithm trajectory, which is exactly the property needed by the GHAL analysis. At the same time, it allows martingale-based concentration arguments under only finite second moments in the native norm. We note here that while clipping of a difference operator \emph{on a fixed clipping schedule} appears in the context of distributed stochastic convex optimization and monotone variational inequalities \cite{gorbunov2024composite}, we are not aware of prior work explicitly leveraging Lipschitz properties to perform the clipping and carry out the stochastic error analysis.  

Our main result, stated in \Cref{thm:master}, is a master theorem for high-probability convergence of the resulting variance-reduced GHAL (VR-GHAL, \Cref{algo:GHAL}), in \emph{quadratically smoothable} spaces (see \Cref{asspt:Banach}; these spaces include separable smooth Banach spaces of possibly infinite dimension and all finite-dimensional spaces using a quantifiable notion of regularity introduced in \cite{juditsky2008large}). The theorem provides sufficient conditions on the variance of clipped difference operators that lead to near-geometric reduction in the fixed-point residual over the algorithm epochs, which further translates into the number of epochs to reach fixed-point residual of order $\epsilon > 0$ being the sum of a logarithmic in $1/\epsilon$ term (with leading constant one) and lower order (log-log and constant) terms. The theorem is proved using an intricate inductive argument coupling the control of the stochastic error (in high probability) with a near-geometrically decreasing bound on the residual. The established guarantee is \emph{anytime}---the algorithm succeeds with probability $1-\delta$ over any number of epochs with a near-geometric decrease in the residual, without the need to pre-determine the target residual $\eps.$ Further, the only problem parameters needed in the algorithm are the number of epochs for which it is run (or, alternatively, an exit criterion based on target error), the failure probability $\delta,$ and the Lipschitz constant $\gamma \in (0, 1].$ 

To obtain sample/oracle complexity results, it then suffices to apply \Cref{thm:master}, showing that the sufficient condition on the difference operator variance can be satisfied using simple minibatching. In particular, without any additional assumptions about the stochastic oracle beyond what is stated in \eqref{eq:stochastic-opt-assmpts}, the sample complexity of VR-GHAL is $\widetilde \cO\big(\tfrac{D^2}{\eps^2} + \min\{\tfrac{D^6}{\eps^5}, \tfrac{D^3}{(1-\gamma)^3\eps^2}\}\big)$  (\Cref{cor:sample-complexity-gen}), where
  $D = \norm{\vx_* - \vx_0} + \sigma$. Assuming, in addition, that the operator is Lipschitz in expectation (see \Cref{asspt:Lipschitz-in-expectation}), the scaling with $\eps$ improves to
  order-$\eps^{-3}$ for nonexpansive operators (\Cref{cor:exp-Lip}). This scaling improves to $\eps^{-2}$ for
  operators that are nonexpansive per sample
  (\Cref{sec:other-models}). All bounds apply to quadratically smoothable spaces, as noted above. In particular, for standard $\ell_p$ spaces with $p \geq 2,$ the bounds scale polynomially with $\min\{p-1, \ln(d)\},$ where $d$ is the ambient dimension.

\subsection{Related Work}

Our work is related to literature in several distinct areas, including fixed-point solvers, robust mean estimation, concentration in Banach spaces, gradient clipping in stochastic optimization, and recent work on solving stochastic Bellman equations. Below, we review those works that are most closely related to ours.  

\paragraph{Halpern iteration and related methods.}
Classical Halpern iteration was introduced in \cite{halpern1967fixed}. Its convergence properties for solving deterministic fixed-point equations with access to exact operator evaluations have been established in recent literature, over two lines of work: considering general Banach spaces \cite{sabach2017first,
kohlenbach2011quantitative, leustean2007rates, cheval2023modified,contreras2023optimal} and considering the special case of Euclidean/Hilbert spaces \cite{lieder2021convergence,tran2021halpern,lee2021fast,diakonikolas2020halpern,kim2019accelerated,he2024convergence,yoon2021accelerated,diakonikolas2021potential}, where connection to monotone variational inequalities can be exploited. On the algorithm side, our work builds on the recently introduced gradual Halpern framework \cite{diakonikolas2025pushing}, as discussed in \Cref{sec:contributions}.  

\paragraph{Stochastic fixed-point problems and algorithms.}
Stochastic fixed-point equations arise naturally when the operator defining the problem is an expectation over simulation randomness. This viewpoint includes Bellman equations in dynamic programming and RL  \cite{Bellman1957,Puterman1994,BertsekasTsitsiklis1996,SuttonBarto2018}. Classical stochastic-approximation analyses for nonexpansive maps include \cite{abounadi2002stochastic}, which proves almost-sure convergence via ODE/stability arguments and applies the framework to Q-learning. These results are asymptotic and rely on stability/boundedness arguments, whereas our goal is a finite-sample high-probability residual guarantee. Much of the recent work on solving stochastic fixed-point equations with formal sample complexity guarantees has been tailored to RL applications, on one end addressing $\ell_\infty$ settings (going beyond the usual Euclidean-based arguments), but on the other crucially exploiting strong additional structural properties of such problems such as samplewise nonexpansiveness  \cite{lee2025optimal,lee2025near,wainwright2019stochastic}. The goal of our work is not to develop new RL algorithms, but to address general stochastic fixed-point equations with high probability. We note however that under samplewise nonexpansiveness, our sample complexity scales with $\eps^{-2},$ as in the aforementioned line of work.

Several works give finite-sample guarantees for stochastic approximation for \emph{contractive} operators ($\gamma < 1$) in non-Euclidean geometries. For example, \cite{chen2020finite} analyze stochastic approximation with contractive operators in arbitrary norms using a generalized Moreau-envelope Lyapunov function, with applications to RL and with logarithmic dimension dependence in $\ell_\infty$ settings. \cite{mou2022optimal} develop a broad variance-reduced stochastic approximation theory for contractive fixed-point equations in separable Banach spaces, with instance-dependent nonasymptotic guarantees in arbitrary seminorms and applications to several RL problems. Their framework is closest to ours in its use of Banach-space geometry, but it is centered on contractive operators (or multi-step contractivity for linear operators) and assumes sample-level Lipschitzness together with bounded noise at the fixed point. Our focus is different: we allow merely nonexpansive operators, require only bounded second central moment in the native norm in the base model, and obtain high-probability residual bounds through a clipped-difference estimator.

A separate line of work has addressed stochastic monotone inclusion problems in Euclidean spaces, which (for Euclidean/Hilbert spaces only) are closely related (equivalent up to translation and rescaling) to fixed-point equations with monotone operators. In particular, under a condition comparable to \eqref{eq:stochastic-opt-assmpts} and additional Lipschitz-in-expectation assumption (see \Cref{asspt:Lipschitz-in-expectation}), \cite{cai2022stochastic} obtained sample complexity scaling with $\eps^{-3}$ for a guarantee that holds in expectation. We obtain the same sample complexity scaling under effectively equivalent assumptions, but in much broader generality: applying with high probability and in general quadratically smoothable spaces. We also note that for Euclidean spaces, lower sample complexity of the order $\eps^{-2}$ was obtained in \cite{chen2022near} without requiring Lipschitzness in expectation\footnote{\cite{chen2022near} writes their results in the context of monotone variational inequalities, with a guarantee on the operator norm, which is precisely the setting of monotone inclusion. Since every cocoercive operator is monotone and Lipschitz operator, and since minimizing the norm of a cocoercive operator is equivalent to minimizing the residual of a nonexpansive operator (see, e.g., the discussion in \cite{diakonikolas2020halpern}), their result also implies the stated sample complexity for stochastic fixed-point equations with nonexpansive operators.}. It is unclear whether such scaling can be transferred to other norms; in fact, there is evidence that this may not be possible \cite{bravo2024stochastic}. 

The works most closely related to ours are  \cite{bravo2024fixedpoint,bravo2024stochastic}. Both works allow the operator to be nonexpansive in a general finite-dimensional norm, but the stochastic noise is controlled through Euclidean second moments. Their explicit high-probability consequences are Markov-based and therefore have polynomial, rather than (poly-)logarithmic, dependence on the failure probability. In terms of the sample complexity results, and suppressing the dependence on other problem parameters, \cite{bravo2024fixedpoint} leads to sample complexity of the order $\eps^{-6}$ for nonexpansive operators. The subsequent work \cite{bravo2024stochastic} improves this scaling to  $\eps^{-5}$ for nonexpansive operators and $(1-\gamma)^{-3}\eps^{-2}$ for $\gamma$-contractive operators, which is of the same order as the bounds we obtain with high probability and with variance measured in the native norm. We further note that unlike \cite{bravo2024stochastic}, we do not assume that the iterates remain in a bounded set. 

\paragraph{High-probability guarantees and clipping under heavy tails.} 
Gradient clipping is the standard mechanism for converting bounded- or
heavy-tailed-variance assumptions into high-probability guarantees with
logarithmic confidence dependence, rooted in the statistical literature on trimmed/truncated mean estimation \cite{catoni2018dimension,lugosi2021robust,oliveira2019sub,tukey1963less}.  It has been employed in a series of works on
stochastic optimization and variational inequalities to obtain such high-probability guarantees
\cite{gorbunov2020clipping, nguyen2023clippedsgd, sadiev2023highprob,
gorbunov2022clipped, gorbunov2024composite, zhang2022parameter,cutkosky2021highprob}. Our
estimator is most directly related to the \emph{difference}-clipping
 in \cite{gorbunov2024composite}, where stochastic gradient
\emph{differences} are clipped to preserve convergence in composite and distributed
problems. We depart from this literature in two ways. First, all of
these analyses are carried out in standard Euclidean space; we work in the native
norm of a quadratically smoothable space, which requires
Banach-space martingale concentration
\cite{pinelis1994optimum, juditsky2008large, whitehouse2024mean} in
place of the usual $\ell_2$ arguments. Second, and more fundamentally,
prior difference-clipping sets the clipping threshold from the noise
scale, whereas we set it from the \emph{operator's Lipschitz constant}:
because $\E[\Delta(\vx,\vy)] = \mT(\vx)-\mT(\vy)$ has norm at most
$\gamma\norm{\vx-\vy}$, clipping $\Delta(\vx,\vy)$ at radius $\gamma\norm{\vx-\vy}$
controls the bias and variance while exploiting structure that the
optimization-side clipping methods do not have access to.

High-probability and concentration guarantees have also been developed for contractive stochastic approximation. For instance, \cite{chandak2022concentration} derived martingale-based concentration bounds for stochastic approximation with contractive maps and martingale-difference or Markov noise, with applications to asynchronous Q-learning. More recently, \cite{chen2025concentration} proved maximal concentration inequalities for contractive stochastic approximation in arbitrary norms under sub-Gaussian additive noise or bounded multiplicative noise. These results are closest in spirit to our high-probability objective, but they rely on contractivity and stronger tail/noise structure, while our bounds allow for merely finite second moments in the native norm and cover nonexpansive operators.

\paragraph{Mean estimation and concentration in Banach spaces.}
Our high-probability analysis rests on concentration for vector-valued
martingales in smooth Banach spaces. We use the supermartingale
construction of \cite{pinelis1994optimum} and its recent adaptation
for infinite-variance and martingale-dependent data by
\cite{whitehouse2024mean}, together with the $2$-smooth /
$\kappa$-regular space machinery of \cite{juditsky2008large,
juditsky2025aggregating}, which motivated the quadratically smoothable abstraction. At each epoch start, our algorithm uses the classical median-of-means estimator
\cite{nemirovsky1983problem, alon1996space, hsu2016loss}, while we note
that geometric-median-type estimators \cite{minsker2015geometric,
whitehouse2024mean} with better universal constants can be substituted
without changing the rates.

\section{Preliminaries}\label{sec:prelims}

Throughout the paper, we use $(\cE, \norm{\cdot})$ to denote a real normed vector space. We use boldface notation to denote the elements of $\cE,$ and, in particular, $\vzero$ to denote the zero element of this space. We define $[m] := \{1, \dots, m\}.$ 

\subsection{Problem Setup}

As stated in the introduction, the central goal is to solve a fixed point operator equation \eqref{eq:fp}, where $\mT$ is either a contractive or a nonexpansive operator. Whenever $\mT$ is nonexpansive,\footnote{For contractive operators, existence of fixed points is guaranteed by the classical Banach theorem \cite{banach1922operations}.} we assume that the set of solutions to \eqref{eq:fp} is nonempty and denote a generic fixed point by $\vx_*$. None of the results are specific to the choice of a fixed point; thus all will apply to any chosen fixed point. 

\paragraph{Normed vector space.} 
We consider stochastic fixed-point equations defined on a real normed vector space $(\cE, \norm{\cdot})$ that is quadratically smoothable, as defined below. %

\begin{definition}[Quadratically Smoothable Space]\label{asspt:Banach}
    We say that $(\cE, \norm{\cdot})$ is quadratically smoothable if it is a real separable Banach space and the following conditions both hold:
    \begin{itemize}
        \item There exists a compatible norm $\norm{\cdot}_+$ and a constant $C \geq 1$ such that
    \begin{equation}\label{eq:norm-equivalence}
        \norm{\vx} \leq \norm{\vx}_+ \leq C\norm{\vx}, \quad \forall \vx \in \cE.
    \end{equation}
    \item The map $\vx \mapsto \norm{\vx}_+^2$ is continuously Fr\'{e}chet differentiable and satisfies for some fixed $B\geq 1$ and all $\vx, \vy \in \cE:$
    \begin{equation}\label{eq:norm-smoothness}
        \norm{\vx + \vy}_+^2 \leq \norm{\vx}_+^2 + D(\norm{\cdot}_+^2)(\vx)[\vy] + B\norm{\vy}_+^2,
    \end{equation}
    where $D(\norm{\cdot}_+^2)(\vx)[\vy]$ denotes the Fr\'{e}chet derivative of $\norm{\cdot}_+^2$ evaluated at $\vx,$ in the direction of $\vy.$ 
    \end{itemize}
    We refer to $\kappa_\cE := B C^2$ as the smoothness parameter of $(\cE, \norm{\cdot})$, provided this space is quadratically smoothable.
\end{definition}
Throughout the paper, whenever we make the assumption that $(\cE, \norm{\cdot})$ is quadratically smoothable, we fix a witness $(\norm{\cdot}_+, B, C)$ satisfying \eqref{eq:norm-equivalence}, \eqref{eq:norm-smoothness}, and write $\kappa_\cE := BC^2$ for the associated smoothness parameter. 

\Cref{asspt:Banach} covers a broad range of real vector spaces. In particular, it encompasses all $(2, B)$-smooth separable Banach spaces in the special case of $\norm{\cdot} = \norm{\cdot}_+$ and $C = 1$ (in this case, \Cref{asspt:Banach} reduces to the definition of $(2, B)$-smooth separable Banach spaces). Additionally, \Cref{asspt:Banach} includes all (finite-dimensional) $\kappa$-regular spaces as defined by \cite{juditsky2008large}. As a result, all of the following example spaces are covered by our results:
\begin{itemize}
    \item All finite-dimensional normed vector spaces of dimension $d$; in this case $\kappa_\cE \leq d$ \cite[Example 3.1]{juditsky2008large};
    \item All finite-dimensional $\ell_p$ spaces with $p \geq 2$ and $d \geq 3;$ here, $\kappa_\cE \leq  \min\{p-1, 2\ln(d)\}$ \cite[Example 3.2]{juditsky2008large};
    \item All (finite-dimensional) Schatten-$p$ spaces (spaces of real $k \times d$ matrices with Schatten-$p$ norm defined as the $\ell_p$ norm of the matrix singular values) with $p \geq 2$; in this case $\kappa_\cE \leq \min\{\max\{2, p-1\}, (2\ln(\min\{k, d\}+2)-1)\exp(1)\}$ \cite[Example 3.3]{juditsky2008large};
    \item All infinite-dimensional $\ell_p$ spaces and separable $L_p(\mu)$ spaces with $p \in[2, \infty)$; in this case one can take $B = p-1,$ $C=1$ and thus $\kappa_\cE = p-1$ \cite{pinelis1994optimum}.
\end{itemize}
We also refer to \cite{juditsky2025aggregating} for additional examples in finite-dimensional spaces and their generalizations. 

Throughout the paper, whenever we work with any random variables in $\cE,$ we assume that they are Bochner integrable, and that all  (conditional) expectations are Bochner expectations. This is a standard technical assumption employed for Banach spaces; see, e.g., \cite{pinelis1994optimum,whitehouse2024mean}. 

\paragraph{Stochastic oracle.} 
As mentioned earlier, we consider stochastic operator evaluations, where we have access to a stochastic oracle $\vtau(\vx; \vxi)$ with $\vxi$ drawn i.i.d.\ from an unknown distribution $\cD$ and independently of the algorithmic past,  where $(\vx, \vxi)\mapsto \tau(\vx; \vxi)$ is jointly measurable and such that for some constant $\sigma^2 \geq 0,$ \eqref{eq:stochastic-opt-assmpts} holds. 

We emphasize here, that unlike prior work \cite{bravo2024stochastic}, we \emph{do not} require that the variance is bounded w.r.t.\ the $\ell_2$ norm; instead, we can use the norm associated with the considered space. This is essential for avoiding (polynomially) dimension-dependent factors resulting from the conversion between the $\ell_2$ norm and the vector space-endowed norm $\norm{\cdot}$ and for handling even potentially infinite-dimensional vector spaces. %

We provide results considering different assumptions on $\vtau$, from no additional assumption compared to \eqref{eq:stochastic-opt-assmpts}, over mild assumptions such as the ability to query two points $\vx, \vy$ with the same random seed $\vxi$ and requiring ``Lipschitzness in expectation'' for the operator estimate $\vtau,$ to assumptions imposing nonexpansivity or even contractivity of  $\vtau(\cdot; \vxi)$ for any fixed random seed $\vxi,$ motivated by applications in reinforcement learning. These optional and often mild additional assumptions are summarized below. 

Whenever we make additional assumptions about the regularity (Lipschitzness)  of the stochastic oracle, we assume that we have access to a multi-query oracle, as summarized below.

\begin{assumption}[Multi-query Stochastic Oracle]\label{asspt:multi-query-oracle}
    It is possible to query the stochastic oracle $\vtau$ at two or more $\vx, \vy \in \cE$ using the same random seed $\vxi$. 
\end{assumption}

The two optional assumptions about regularity of the stochastic oracle are summarized in the following.

\begin{assumption}[Lipschitzness in Expectation]\label{asspt:Lipschitz-in-expectation}
    There exists $L \in (0, + \infty)$ such that $\E_{\vxi \sim \cD}[\|\vtau(\vx; \vxi) - \vtau(\vy; \vxi)\|^2] \leq L^2 \|\vx - \vy\|^2,$ $\forall \vx, \vy \in \cE.$
\end{assumption}

\begin{assumption}[Nonexpansive/Contractive Stochastic Oracle]\label{asspt:samplewise-Lip}
    For $\cD$-almost every $\vxi$ and any $\vx, \vy \in \cE$, $\|\vtau(\vx; \vxi) - \vtau(\vy; \vxi)\| \leq \gamma \|\vx - \vy\|,$ where $\gamma \in (0, 1].$
\end{assumption}

\subsection{Estimating the Norm of Sample Averages and Sums}

We first state the following simple lemma, which establishes that in the considered quadratically smoothable vector spaces, minibatching reduces the variance. This is essential for establishing concentration results needed for our analysis. The proof of the lemma is provided in \Cref{appx:omitted-prelims}, for completeness.   
\begin{restatable}{lemma}{lemmaminibatchvar}\label{lemma:minibatching-variance}
    Let $(\cE, \norm{\cdot})$ be a quadratically smoothable  space (\Cref{asspt:Banach}). Let $\cF_0 \subseteq \cF_1 \subseteq \dots \subseteq \cF_m$ be a filtration, and let $\veta_i \in L^2(\Omega; \cE)$ be $\cF_i$-measurable random variables that satisfy $\E[\veta_i|\cF_{i-1}] = \vzero,$ $i \in 1, \dots, m.$ Let $\vetah_m = \frac{1}{m} \sum_{i=1}^m \veta_i.$ Then:
    \begin{equation}\notag%
        \E[\|\vetah_m\|^2|\cF_0] \leq \frac{\kappa_\cE}{m}\Big(\frac{1}{m}\sum_{i=1}^m \E[\|\veta_i\|^2|\cF_0]\Big). 
    \end{equation}
    More generally, if there are nonnegative, integrable, $\cF_{i-1}$-measurable $\sigma_i^2$, $i \in [m]$, such that $\E[\|\veta_i\|^2|\cF_{i-1}] \leq \sigma_i^2$ almost surely; then
    \begin{equation}\notag%
        \E[\|\vetah_m\|^2|\cF_0] \leq \frac{\kappa_\cE}{m}\Big(\frac{1}{m}\sum_{i=1}^m \E[\sigma_i^2|\cF_0]\Big). 
    \end{equation}
\end{restatable}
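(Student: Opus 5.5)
We work with the unnormalized martingale $\vsigma_k := \sum_{i=1}^k \veta_i$, $\vsigma_0 = \vzero$, in the smooth norm $\norm{\cdot}_+$. At the end we transfer back to $\norm{\cdot}$ using \eqref{eq:norm-equivalence}. The plan is to show by induction on $k$ that
\[
\E[\norm{\vsigma_k}_+^2 \mid \cF_0] \leq B\sum_{i=1}^k \E[\norm{\veta_i}_+^2 \mid \cF_0].
\]
We then chain the inequalities: $\norm{\vsigma_m}^2 \leq \norm{\vsigma_m}_+^2$, and $\norm{\veta_i}_+^2 \leq C^2\norm{\veta_i}^2$. Dividing by $m^2$ gives $\E[\norm{\vetah_m}^2\mid\cF_0] \leq \frac{BC^2}{m^2}\sum_i \E[\norm{\veta_i}^2\mid \cF_0]$, which is the first claim with $\kappa_\cE = BC^2$.

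For the inductive step, apply \eqref{eq:norm-smoothness} with $\vx = \vsigma_{k-1}$ and $\vy = \veta_k$:
\[
\norm{\vsigma_k}_+^2 \leq \norm{\vsigma_{k-1}}_+^2 + D(\norm{\cdot}_+^2)(\vsigma_{k-1})[\veta_k] + B\norm{\veta_k}_+^2 .
\]
Take $\E[\cdot\mid\cF_{k-1}]$. The point $\vsigma_{k-1}$ is $\cF_{k-1}$-measurable, so the Fréchet derivative $G := D(\norm{\cdot}_+^2)(\vsigma_{k-1})$ is an $\cF_{k-1}$-measurable continuous linear functional. The key step is the identity $\E[G[\veta_k]\mid\cF_{k-1}] = G[\E[\veta_k\mid\cF_{k-1}]] = 0$, i.e., that a bounded linear functional commutes with the Bochner conditional expectation even when the functional itself depends on $\cF_{k-1}$. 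After that, take $\E[\cdot\mid \cF_0]$ using the tower property and sum over $k$.

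This pulling-out step is the main technical point. I would justify it in three parts. First, for a fixed functional, commuting with Bochner conditional expectation is standard. Second, for an $\cF_{k-1}$-measurable random functional, approximate $\vsigma_{k-1}$ by simple $\cF_{k-1}$-measurable random variables; separability of $\cE$ makes this possible. Continuity of the derivative then lets us pass to the limit. Third, integrability comes from the bound $|G[\vy]| \leq 2\norm{\vsigma_{k-1}}_+\norm{\vy}_+$, which follows from the quadratic form of the smoothness inequality (set $\vy \to t\vy$ and let $t\to 0^\pm$ to get $|G[\vy]| \leq 2\norm{\vx}_+\norm{\vy}_+$, or use the derivative formula for $\norm{\cdot}_+^2$ directly). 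Cauchy–Schwarz then gives integrability of $G[\veta_k]$, since $\vsigma_{k-1},\veta_k \in L^2$ (each $\vsigma_k\in L^2$ inductively). If one wants to avoid these measure-theoretic details, one can localize by truncating on events $\{\norm{\vsigma_{k-1}}_+\leq R\}\in\cF_{k-1}$ and let $R\to\infty$ by monotone convergence.

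For the general statement, use the same induction, but before conditioning on $\cF_0$ bound $\E[\norm{\veta_k}_+^2\mid\cF_{k-1}] \leq C^2\E[\norm{\veta_k}^2\mid\cF_{k-1}] \leq C^2\sigma_k^2$. The tower property then gives $\E[\norm{\vsigma_m}_+^2\mid\cF_0]\leq BC^2\sum_i\E[\sigma_i^2\mid\cF_0]$. Dividing by $m^2$ yields the second claim.
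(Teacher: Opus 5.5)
Your proposal is correct and follows essentially the same route as the paper's proof. Both apply \eqref{eq:norm-smoothness} to the partial sums in $\norm{\cdot}_+$, kill the derivative term using $\E[\veta_k\mid\cF_{k-1}]=\vzero$ together with the integrability bound $|D(\norm{\cdot}_+^2)(\vx)[\vy]|\le 2\norm{\vx}_+\norm{\vy}_+$, then apply the tower property, unroll, and convert norms via $C$; for the general claim, the $\cF_{k-1}$-conditional bound $C^2\sigma_k^2$ is inserted before towering. Your extra justification for pulling the $\cF_{k-1}$-measurable functional through the conditional expectation is more careful than the paper, which simply asserts it. One small correction: the bound $2\norm{\vx}_+\norm{\vy}_+$ on the derivative comes from the triangle inequality for $\norm{\cdot}_+$ applied to the difference quotient, not from \eqref{eq:norm-smoothness} alone.
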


To obtain our high-probability results, we rely upon concentration results for martingale sequences in smooth Banach spaces; namely on \cite[Theorem 3.2]{pinelis1994optimum} and its recent adaptation in \cite[Proposition 3.3]{whitehouse2024mean}. Since neither reference provides a concentration bound in a form that is used in our work, we state it in \Cref{prop:Banach-concentration} below, and prove it, for completeness, in \Cref{appx:omitted-prelims}. This specific form allows us to obtain a time-uniform bound (based on an application of classical Ville's inequality), which in turn is useful for avoiding applications of a union bound to bound the joint probability of a sequence of martingales exceeding a pre-specified quantity. %

\begin{restatable}{proposition}{propbanachconc}\label{prop:Banach-concentration}
    Let $(\cE, \norm{\cdot})$ be a quadratically smoothable space (\Cref{asspt:Banach}), and fix the associated constants $C, \kappa_\cE$. Let $n \geq 1$ and let $\veta_i \in \cE$, $i \in [n]$, define a stochastic process adapted to filtration $\{\cF_i\}_{0\leq i\leq n}$, such that $\E[\veta_i|\cF_{i-1}] = \vzero$ and for $\cF_0$-measurable $\sigma_i^2$ and $b_i$, it holds almost surely that $\E[\|\veta_i\|^2 | \cF_{i-1}] \leq \sigma_i^2 < \infty$ and $\norm{\veta_i} \leq b_i < \infty$. Define $\mS_0 = 0$, $\mS_m = {\sum_{i=1}^m \veta_i}$ for $m \geq 1,$ and $\norm{\vb_{1:n}}_{\infty} := \max_{1 \leq i \leq n}b_i.$ Then, for any $\delta \in (0, 1)$ and any $n \geq 1$, we have that with probability at least $1-\delta$ for all $m \in [n]$ simultaneously:
    \begin{equation}\notag
        \norm{\mS_m}\leq \sqrt{2\kappa_\cE \ln(2/\delta) \sum_{i=1}^n \sigma_i^2 } + \frac{C\ln(2/\delta) \norm{\vb_{1:n}}_{\infty}}{3}.
    \end{equation} 
\end{restatable}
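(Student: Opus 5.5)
We follow Pinelis' supermartingale construction, run in the smooth norm $\norm{\cdot}_+$ and combined with Ville's maximal inequality. All work is done in $\norm{\cdot}_+$, and we translate back at the end using $\norm{\cdot}\le\norm{\cdot}_+\le C\norm{\cdot}$. The increments then satisfy $\norm{\veta_i}_+\le Cb_i$ and $\E[\norm{\veta_i}_+^2|\cF_{i-1}]\le C^2\sigma_i^2$.

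Fix $\lambda>0$ and set $b:=\norm{\vb_{1:n}}_\infty$. As in \cite[Theorem 3.2]{pinelis1994optimum}, consider $u(\vx)=\cosh(\lambda\norm{\vx}_+)$, or a smoothed version such as $\cosh(\lambda\sqrt{\norm{\vx}_+^2+\text{const}})$ to avoid nondifferentiability at $\vzero$. Using the smoothness inequality \eqref{eq:norm-smoothness} together with $\E[\veta_i|\cF_{i-1}]=\vzero$, which kills the first-order term $D(\norm{\cdot}_+^2)(\mS_{i-1})[\veta_i]$ in conditional expectation, we derive the one-step bound
\[
\E[u(\mS_i)|\cF_{i-1}]\le u(\mS_{i-1})\Big(1+B\,\E\big[e^{\lambda\norm{\veta_i}_+}-1-\lambda\norm{\veta_i}_+\,\big|\,\cF_{i-1}\big]\Big).
\]
This is Pinelis' Theorem 3 argument, with the $(2,D)$-smoothness constant taken to be $B$. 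Since $\norm{\veta_i}_+\le Cb$, the usual Bernstein-type estimate $e^{t}-1-t\le \frac{t^2}{2}\frac{1}{1-t/3}$ for $0\le t<3$ yields a factor of at most $\exp\big(\frac{\lambda^2 B C^2\sigma_i^2/2}{1-\lambda Cb/3}\big)$. Hence
\[
M_m:=u(\mS_m)\exp\Big(-\frac{\lambda^2\kappa_\cE\sum_{i\le m}\sigma_i^2/2}{1-\lambda Cb/3}\Big)
\]
is a nonnegative supermartingale with $M_0=1$; here we use that the $\sigma_i^2$ and $b_i$ are $\cF_0$-measurable, so they can be treated as constants. The main obstacle is carefully verifying this one-step inequality in the quadratically smoothable setting. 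In particular, the second derivative of $\cosh$ composed with $\norm{\cdot}_+$ must be controlled only through \eqref{eq:norm-smoothness}, which holds for the square of the norm rather than a twice-differentiable norm. To handle this, we would follow Pinelis' integral form, applying \eqref{eq:norm-smoothness} along the segment $\mS_{i-1}+t\veta_i$, as done in \cite[Proposition 3.3]{whitehouse2024mean}.

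Next, apply Ville's inequality: with probability at least $1-\delta/2$, we have $M_m<2/\delta$ for all $m\le n$, which also gives time-uniformity. Using $\cosh(y)\ge e^{y}/2$ and replacing the partial variance sum by the full sum $V:=\sum_{i=1}^n\sigma_i^2$ (which only enlarges the bound), we get for all $m$ simultaneously
\[
\lambda\norm{\mS_m}_+\le \ln(4/\delta)+\frac{\lambda^2\kappa_\cE V/2}{1-\lambda Cb/3}.
\]
Dividing by $\lambda$ and optimizing over the deterministic, $\cF_0$-measurable choice of $\lambda$ in the standard Bernstein way gives $\sqrt{2\kappa_\cE V\ln(\cdot)}+\frac{Cb\ln(\cdot)}{3}$. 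Concretely, we take $\lambda=\frac{\sqrt{2L}}{\sqrt{\kappa_\cE V}+\sqrt{2L}\,Cb/3}$ with $L$ the log term.

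Finally, we deal with the constants. To obtain $\ln(2/\delta)$ rather than $\ln(4/\delta)$, we either use Pinelis' sharper bound $\prob(\sup\norm{\mS_m}_+\ge r)\le 2e^{-\lambda r}\E[\ldots]$, which comes from $\cosh(y)\ge e^{y}/2$ applied with Ville at level $\delta$, giving the $2$; or we absorb the slack. The only step that needs real care is the one-step supermartingale bound above. Converting via $\norm{\mS_m}\le\norm{\mS_m}_+$ then completes the proof.
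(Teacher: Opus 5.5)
Your proposal is correct and follows the paper's proof essentially step for step. Both arguments use Pinelis' $\cosh(\lambda\norm{\cdot}_+)$ supermartingale in the smooth norm, with the one-step bound deferred to Pinelis/Whitehouse exactly as the paper does, then Ville's inequality, $\cosh y\ge e^y/2$, the Bernstein estimate $e^t-1-t\le \frac{t^2}{2(1-t/3)}$, and the same choice of $\lambda$. The only cleanup is to apply Ville directly at threshold $1/\delta$ (failure probability $\delta$), which yields $\ln(2/\delta)$ outright; the $\ln(4/\delta)$ detour is unneeded, and ``absorbing the slack'' would not recover the stated constant.
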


\subsection{Median-of-Means Estimator}\label{sec:mom}

The median-of-means estimator (MoME) is a classical estimator originally introduced in \cite{nemirovsky1983problem} (although not known under this name until it was rediscovered later; see \cite{alon1996space,hsu2016loss}). MoME can be seen as a form of ``boosting'' a weak estimator that succeeds with constant probability to one that succeeds with probability $1-\delta,$ for $\delta \in (0, 1),$ at logarithmic cost in $1/\delta.$ 
We summarize here known results about the MoME, using a strengthening of the results to the ``unknown'' error margin and slightly tighter constants presented in \cite{hsu2016loss}, which is useful for developing a  parameter-free algorithm. More detailed statements and proofs can be found in \cite{hsu2016loss}. 

\paragraph{One-dimensional MoME} 

The MoME utilizes i.i.d.\ samples split into $k = \Theta(\log(1/\delta))$ `buckets,' each containing $m$ samples (so the total number of samples is $mk$). The estimator is summarized in \Cref{algo:MoME-1D}. Notation $\etah_{\mathrm{med}}:= \mathrm{median}(\etah_1, \etah_2, \dots, \etah_k)$ refers to any median (if not unique). 

\begin{algorithm}
\caption{Median-of-Means Estimator, 1D version (MoME-1D)}\label{algo:MoME-1D}
    \begin{algorithmic}
        \State \textbf{Input}: $m \geq 4$, $\delta \in (0, 1),$ sample access to $\eta_i \sim \cD$ drawn i.i.d., where $\E[\eta_i] = \eta$, $\E[(\eta_i - \eta)^2] = \sigma^2$
        \State $k = \lceil 4.5 \ln(1/\delta)\rceil$
        \For{$j = 1$ to $k$}
        \State Draw $m$ i.i.d.\ samples from $\cD$ and let $\etah_j$ be their empirical average
        \EndFor
        \State\Return $\etah_{\mathrm{med}}:= \mathrm{median}(\etah_1, \etah_2, \dots, \etah_k)$
    \end{algorithmic}
\end{algorithm}

\begin{lemma}\label{lemma:1D-MoME}
    Given $m \geq 4$, $\delta \in (0, 1)$, and sample access to i.i.d.\ examples from some distribution $\cD,$ consider the output $\etah_{\mathrm{med}}:= \mathrm{median}(\etah_1, \etah_2, \dots, \etah_k)$ of \Cref{algo:MoME-1D}. Then, with probability at least $1-\delta,$
    \[|\etah_{\mathrm{med}} - \eta| \leq 6\sigma\sqrt{\frac{\lceil \ln(1/\delta) \rceil}{mk}}.\]
\end{lemma}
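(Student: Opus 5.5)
The plan is the standard median-of-means argument: Chebyshev for each bucket, then a Hoeffding (binomial tail) bound on the number of bad buckets, with the threshold chosen so the constants match the statement. Set $k = \lceil 4.5\ln(1/\delta)\rceil$ and let $r > 0$ be a radius to be fixed. Each bucket mean $\etah_j$ averages $m$ i.i.d.\ samples, so $\E[(\etah_j - \eta)^2] = \sigma^2/m$. By Chebyshev's inequality,
\[
\sP\big(|\etah_j - \eta| > r\big) \leq \frac{\sigma^2}{m r^2}.
\]
We choose $r$ so this is at most $1/4$, namely $r = 2\sigma/\sqrt{m}$. Call bucket $j$ \emph{bad} if $|\etah_j - \eta| > r$. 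The buckets use disjoint samples, so the indicators $Z_j = \one\{\text{bucket } j \text{ bad}\}$ are independent Bernoulli variables with mean $p \leq 1/4$.

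Next we bound how many buckets can be bad. If at most $k/2$ buckets are bad, then strictly more than half of the $\etah_j$ lie in $[\eta - r, \eta + r]$. Any median must then lie in this interval too: otherwise, if the median exceeded $\eta + r$, at least half of the values would be at least the median and hence outside the interval, a contradiction. So it suffices to bound $\sP(\sum_j Z_j \geq k/2)$. Hoeffding's inequality with gap $1/2 - p \geq 1/4$ gives
\[
\sP\Big(\sum_j Z_j \geq k/2\Big) \leq \exp\big(-2k(1/4)^2\big) = \exp(-k/8).
\]
This needs $k \geq 8\ln(1/\delta)$, which $k = \lceil 4.5\ln(1/\delta)\rceil$ does not meet. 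We fix this by trading a smaller failure probability per bucket against a larger radius. Take $r = c\,\sigma/\sqrt{m}$ with $p \leq 1/c^2$. Hoeffding then gives $\exp(-2k(1/2 - 1/c^2)^2)$, and we need $2 \cdot 4.5\,(1/2 - 1/c^2)^2 \geq 1$, i.e.\ $1/2 - 1/c^2 \geq 1/3$, i.e.\ $c^2 \geq 6$. With $c = \sqrt{6}$, the event holds with probability at least $1 - \delta$ and $r = \sqrt{6}\,\sigma/\sqrt{m}$.

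It remains to convert $r$ into the stated form $6\sigma\sqrt{\lceil\ln(1/\delta)\rceil/(mk)}$. This amounts to checking
\[
\sqrt{6/m} \leq 6\sqrt{\lceil\ln(1/\delta)\rceil/(mk)}, \quad \text{i.e.}\quad k \leq 6\lceil\ln(1/\delta)\rceil.
\]
This holds because $\lceil 4.5 x\rceil \leq 4.5\lceil x\rceil + 1 \leq 6\lceil x\rceil$ whenever $\lceil x\rceil \geq 1$, which is true for $\delta < 1$.

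The main obstacle is the constant bookkeeping, and it is the step where this plan could fail. If the Hoeffding constants did not close, the fallback would be the sharper relative-entropy (Chernoff) bound $\exp(-k\,\mathrm{KL}(1/2 \,\|\, p))$, as in \cite{hsu2016loss}. Two points remain unused: the condition $m \geq 4$ plays no role in the argument above, and we take it to matter only in the cited sharper version. The case $\sigma = 0$ is trivial, since every bucket mean then equals $\eta$ almost surely.
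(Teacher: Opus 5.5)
Your proposal is correct and is essentially the standard Chebyshev-plus-Hoeffding argument from \cite{hsu2016loss}, to which the paper defers without giving its own proof: Chebyshev at radius $\sigma\sqrt{6/m}$ gives per-bucket failure probability at most $1/6$, Hoeffding then gives $e^{-2k/9}\le\delta$ for $k=\lceil 4.5\ln(1/\delta)\rceil$, and the loosening $k\le 6\lceil\ln(1/\delta)\rceil$ is exactly where the paper's constant $6$ comes from. One small wording fix: the good case should read ``fewer than $k/2$ buckets are bad'' rather than ``at most $k/2$.'' This is precisely the complement of the event $\{\sum_j Z_j \geq k/2\}$ that you actually bound, so the argument itself is unaffected.
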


\paragraph{MoME in Banach Spaces}

The variant of MoME presented in \cite{hsu2016loss} does not explicitly use Chebyshev's inequality, since the property that the variance of a sample average of $m$ i.i.d.\ samples scales with $1/m$ only holds for Rademacher type-2 spaces. In particular, this is true for quadratically smoothable spaces (as in \Cref{lemma:minibatching-variance}), but fails in infinite-dimensional $\ell_p$ spaces with $p < 2.$ The algorithmic procedure from \cite{hsu2016loss} is summarized in \Cref{algo:MoM}. We state the result from \cite{hsu2016loss} assuming access to weak estimators $\vetah_1, \vetah_2, \dots, \vetah_k$, which estimate the mean to error $\epsilon > 0$ with constant probabilities.  We then apply \Cref{lemma:minibatching-variance} to argue such estimators can be obtained using sample averages.

\begin{algorithm}
\caption{Median-of-Means Estimator (MoME)}\label{algo:MoM}
    \begin{algorithmic}
        \State \textbf{Input}: $\delta \in (0, 1),$ independent estimators $\vetah_1, \dots, \vetah_k$ such that $\forall i \in [k],$  $\prob[\norm{\vetah_i - \veta} \geq \epsilon] \leq 1/3$
        \State Let  $\cH := \{\vetah_1, \vetah_2, \dots, \vetah_k\}$
        \State For each $i \in [k],$ let $r_i := \inf\{r \geq 0: \#\{j \in [k]: \norm{\vetah_j - \vetah_i} \leq r\} > k/2\}$
        \State Let $i_* \in \argmin_{i \in [k]}r_i,$ choosing the lowest such $i_*$ if not unique
        \State \Return $\vetah := \vetah_{i_*}$
    \end{algorithmic}
\end{algorithm}

The boosting result from \cite{hsu2016loss} is summarized in the following lemma.
\begin{lemma}[Proposition 9 in \cite{hsu2016loss}]\label{lemma:MoME-HsuSabato}
   If $k \geq  18 \ln(1/\delta),$ then with probability at least $1-\delta,$ the output of \Cref{algo:MoM} satisfies $\norm{\vetah - \veta} \leq 3\epsilon.$ 
\end{lemma}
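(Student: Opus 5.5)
The argument is the standard median trick. The key step is to show that whenever a majority of the weak estimators lie within $\epsilon$ of $\veta$, the selected estimator lies within $3\epsilon$.

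\textbf{Step 1 (probability bound).} Let $Z_i := \one\{\norm{\vetah_i - \veta} \geq \epsilon\}$. These are independent Bernoulli variables with mean at most $1/3$. Let $\cE_{\mathrm{good}}$ be the event $\sum_{i=1}^k Z_i < k/2$. By Hoeffding's inequality applied to the deviation $k/2 - k/3 = k/6$,
\[
\prob\Big[\sum_i Z_i \geq k/2\Big] \leq \exp\big(-2k(1/6)^2\big) = \exp(-k/18) \leq \delta,
\]
where the last inequality uses $k \geq 18\ln(1/\delta)$. So $\cE_{\mathrm{good}}$ holds with probability at least $1-\delta$. On this event, the set $G := \{j : \norm{\vetah_j - \veta} < \epsilon\}$ has $|G| > k/2$.

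\textbf{Step 2 (deterministic argument on $\cE_{\mathrm{good}}$).} First bound $r_{i_*}$. Pick any $i \in G$. For every $j \in G$, the triangle inequality gives $\norm{\vetah_j - \vetah_i} < 2\epsilon$. Hence more than $k/2$ indices lie within radius $2\epsilon$ of $\vetah_i$, so $r_i \leq 2\epsilon$. Since $i_*$ minimizes $r_i$, we get $r_{i_*} \leq 2\epsilon$.

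Next, observe that the infimum defining $r_{i_*}$ is attained, because the count function is right-continuous in $r$ (it jumps at finitely many values and the inequality is non-strict). So more than $k/2$ indices $j$ satisfy $\norm{\vetah_j - \vetah_{i_*}} \leq r_{i_*}$. Two sets of indices, each of size greater than $k/2$, must intersect. Therefore some $j$ lies in $G$ and also satisfies $\norm{\vetah_j - \vetah_{i_*}} \leq r_{i_*}$. The triangle inequality then gives
\[
\norm{\vetah_{i_*} - \veta} \leq r_{i_*} + \epsilon \leq 3\epsilon.
\]

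\textbf{Potential obstacles.} The only subtle points are the two minor technicalities above. The first is strict versus non-strict inequality in defining $G$: using $\prob[\cdot \geq \epsilon] \leq 1/3$ makes the good event have strict inequality $<\epsilon$, which is harmless. The second is attainment of the infimum defining $r_{i_*}$, which the right-continuity observation handles.

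The main step needing care is matching the constant $18$. Hoeffding with the gap $1/6$ gives exactly $\exp(-k/18)$, so no slack is needed. If one instead wants the statement with $k = \lceil 18\ln(1/\delta)\rceil$, the same computation applies verbatim.
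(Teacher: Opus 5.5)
Your proof is correct. The paper does not reprove this lemma but imports it from Hsu and Sabato, and your argument is essentially the proof given there: Hoeffding applied to the indicators $\one\{\norm{\vetah_i-\veta}\geq\epsilon\}$ yields a strict majority of good estimators with probability at least $1-e^{-k/18}\geq 1-\delta$, then the deterministic bound $r_{i_*}\leq 2\epsilon$ and the majority-intersection step give $\norm{\vetah-\veta}\leq r_{i_*}+\epsilon\leq 3\epsilon$.
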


To apply \Cref{lemma:MoME-HsuSabato}, we use it in conjunction with \Cref{lemma:minibatching-variance}, which provides weak estimators based on simple sample averages. The final guarantee used in our results is stated below.

\begin{proposition}\label{prop:MoME-final}
    Let $(\cE, \norm{\cdot})$ be a quadratically smoothable space (\Cref{asspt:Banach}). Given $m = n k$ i.i.d.\ samples $\veta_1, \veta_2, \dots, \veta_m$, let $\vetah_i := \frac{1}{n}\sum_{j = (i-1)n + 1}^{i n} \veta_j,$ for $i \in [k],$ suppose that $\E[\veta_j] = \veta$ and $\E[\norm{\veta_j - \veta}^2] \leq \sigma^2 < \infty$ for $j \in [m]$. If $k \geq 18 \ln(1/\delta),$ then with probability at least $1-\delta,$ the output $\vetah$ of MoME (\Cref{algo:MoM}) applied to $\vetah_1, \dots, \vetah_k$ satisfies $\norm{\vetah - \veta} \leq 3\sigma\sqrt{\frac{3\kappa_\cE}{n}}.$ As a result, given $\epsilon > 0$ and $\delta \in (0, 1)$, and choosing $n = \lceil \frac{27}{\epsilon^2} \rceil$ and $k = \lceil 18\ln(1/\delta)\rceil$, we have that with probability at least $1 - \delta,$ $\norm{\vetah - \veta} \leq \eps \sqrt{\kappa_\cE}\sigma$. The total number of samples utilized in this case is $m = nk %
    = \cO(\frac{\ln(1/\delta)}{\eps^2})$.  
\end{proposition}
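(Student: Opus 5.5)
The plan is to feed the averaged buckets $\vetah_1,\dots,\vetah_k$ into \Cref{lemma:MoME-HsuSabato}, using Chebyshev's inequality together with \Cref{lemma:minibatching-variance} to check its weak-estimator hypothesis. The sample-count statement then follows by direct substitution.

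\textbf{Step 1: the buckets satisfy the hypothesis of \Cref{lemma:MoME-HsuSabato}.} For each $i \in [k]$, write $\vetah_i - \veta$ as the average of the $n$ centered variables $\veta_j - \veta$ with $j$ ranging over bucket $i$. These are i.i.d. with zero mean, so they form a martingale difference sequence under the natural filtration with trivial $\cF_0$. \Cref{lemma:minibatching-variance} then gives
\[
\E\big[\norm{\vetah_i - \veta}^2\big] \le \frac{\kappa_\cE \sigma^2}{n}.
\]
Chebyshev's (Markov's) inequality applied to $\norm{\vetah_i-\veta}^2$ yields
\[
\prob\big[\norm{\vetah_i - \veta} \ge \epsilon\big] \le \frac{\kappa_\cE\sigma^2}{n\epsilon^2}.
\]
The right-hand side equals $1/3$ when $\epsilon := \sigma\sqrt{3\kappa_\cE/n}$. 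The buckets use disjoint samples, so they are independent.

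\textbf{Step 2: apply \Cref{lemma:MoME-HsuSabato}.} Since $k \ge 18\ln(1/\delta)$, with probability at least $1-\delta$ the output satisfies
\[
\norm{\vetah - \veta} \le 3\epsilon = 3\sigma\sqrt{3\kappa_\cE/n}.
\]
This is the first claim. (If $\sigma = 0$, every bucket equals $\veta$ almost surely and the claim is trivial.)

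\textbf{Step 3: the parameter choice.} Take $n = \lceil 27/\epsilon^2\rceil$, where $\epsilon$ now denotes the target accuracy from the statement. Then $n \ge 27/\epsilon^2$, so
\[
3\sigma\sqrt{3\kappa_\cE/n} \le 3\sigma\sqrt{3\kappa_\cE\,\epsilon^2/27} = \epsilon\sigma\sqrt{\kappa_\cE}.
\]
Taking $k = \lceil 18\ln(1/\delta)\rceil$ meets the requirement on $k$. The total sample count is
\[
m = nk = \cO\big(\epsilon^{-2}\ln(1/\delta)\big),
\]
treating $\epsilon \lesssim 1$ and $\delta$ bounded away from $1$, which is implicit in the $\cO$ notation.

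There is no real obstacle here. The only care needed is bookkeeping:
\begin{itemize}
\item keep the weak-estimator radius $\epsilon$ of \Cref{lemma:MoME-HsuSabato} separate from the target accuracy $\epsilon$ in the second part of the statement;
\item confirm that the i.i.d.\ setting meets the conditional hypotheses of \Cref{lemma:minibatching-variance}.
\end{itemize}
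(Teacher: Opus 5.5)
Your proposal is correct and follows the paper's proof essentially step for step. You bound each bucket's second moment via \Cref{lemma:minibatching-variance} applied to the centered samples. Chebyshev at radius $\sigma\sqrt{3\kappa_\cE/n}$ then gives failure probability at most $1/3$, after which you invoke \Cref{lemma:MoME-HsuSabato} and substitute $n=\lceil 27/\epsilon^2\rceil$, $k=\lceil 18\ln(1/\delta)\rceil$. Your separate handling of $\sigma=0$ is a small addition the paper leaves implicit.
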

\begin{proof}
    For quadratically smoothable spaces and under the stated variance assumption, \Cref{lemma:minibatching-variance} applies to centered random variables $\veta_i - \veta$. Thus, for each $\vetah_i,$ we have $\E[\norm{\vetah_i - \veta}^2] \leq \frac{\kappa_\cE \sigma^2}{n}.$ By Chebyshev inequality, $\prob[\norm{\vetah_i - \veta} \geq \sqrt{3\kappa_\cE\sigma^2 /n}] \leq \frac{1}{3}.$ It remains to apply \Cref{lemma:MoME-HsuSabato} with $\epsilon = \sqrt{3\kappa_\cE\sigma^2 /n}.$ The remaining claims follow as a consequence, by a simple calculation.   
\end{proof}

We remark here that there exist alternative estimators such as those from \cite{minsker2015geometric,whitehouse2024mean} that can be computed in time usually polynomial in the sample size (generally requiring higher computational effort than the median-of-means estimator described in \Cref{algo:MoM}\footnote{For instance, \cite{minsker2015geometric} requires solving the convex program $\min_{\vx \in \cE}\sum_{i=1}^m \norm{\vx - \vetah_i}$, where, as in \Cref{algo:MoM}, $\vetah_i$ are empirical means of subgroups of samples.}) but that allow for better universal constants in the sample size $\cO(\frac{\ln(1/\delta)}{\epsilon^2})$ (with the same dependence on $\epsilon, \delta$). Any of those estimators can be used in place of median-of-means in our results.

\section{Variance-Reduced Gradual Halpern Algorithm}\label{sec:GHAL}

In this section, we provide results on the convergence of a variant of GHAL \cite{diakonikolas2025pushing}, under assumptions we later show are satisfied by our variance-reduced estimator $\mTt_{k-1, j}.$ The variant of GHAL that we use is slightly different than the algorithms in \cite{diakonikolas2025pushing}, for two main reasons that have to do with the stochastic nature of the algorithm: (1) we do not want to assume that $\mTt$ maps a convex set of \emph{known diameter} to itself (so \cite[GHAL]{diakonikolas2025pushing} is not directly applicable), and (2) even if we proved that the algorithm iterates stay within a bounded set with high probability, we do not want to make step size updates based on algorithm iterates, which themselves are random variables (so \cite[AdaGHAL]{diakonikolas2025pushing} is not directly applicable). The considered variant of GHAL that incorporates variance reduction (VR-GHAL) is summarized in \Cref{algo:GHAL}. We highlight here that the only problem parameter that the algorithm needs to know (apart from target $\delta \in (0,1)$ and the number of epochs $K$ that, as is standard, are part of the input) is the Lipschitz constant $\gamma \in (0, 1].$ We also stress that the algorithm guarantee is \emph{anytime}: instead of fixing the target value of $\eps$, the algorithm can be run for any number of epochs $k$, after which error $\eps = \eps(k)$ is attained with probability at least $1-\delta.$ %

\begin{algorithm}
\caption{Variance-Reduced Gradual Halpern Algorithm (VR-GHAL)}\label{algo:GHAL}
    \begin{algorithmic}[1]
    \Statex \textbf{Input}: $\vx_0 \in \cE,$ $\beta \in (0, 1),$ $\delta \in (0, 1)$, $\gammab \in [\gamma ,1]$, $K \geq 1$
     \Statex \textbf{Initialization}: $\delta_0 = (1-\beta) \delta/\beta$, $\epsilon_0 = 1,$ $\lambdab_0 = 1,$ $k \leftarrow 0,\, \vxh_0 \leftarrow \vx_0$, $\mTt_{0, 0} = \mathrm{MoME}(\beta\epsilon_0/2, \beta\delta_0/2, \vxh_{0})$
     \For{$k=1$ to $K$}
     \State $\epsilon_k \leftarrow \beta \epsilon_{k-1}$
     \State $\lambdab_k \leftarrow \beta\lambdab_{k-1},$ $\lambda_k = \lambdab_k/(1+\lambdab_k)$
     \State $\delta_k = \beta \delta_{k-1}$
     \State $\vy_0 = \vxh_{k-1}$, $J_k = \lceil\ln(\beta)/\ln((1-\lambda_k)\gammab)\rceil$
     \For{$j = 0$ to $J_k - 1$} 
     \State $\vy_{j+1} = \lambda_k \vy_0 + (1-\lambda_k)\mTt_{k-1, j}$
     \State Compute $\mTt_{k-1,j+1}$, a stochastic estimate of $\mT(\vy_{j+1})$
     \EndFor
     \State $\vxh_k = \vy_{J_k}$
     \State $\mTt_{k, 0} = \mathrm{MoME}(\epsilon_{k+1}/2, \delta_{k+1}/2, \vxh_{k})$
     \EndFor
     \State\Return $\vxh_{k}$
    \end{algorithmic}
\end{algorithm}

Below, we state and prove a simple lemma that is useful for tracking the algorithm's per-epoch progress. We defer the complete convergence analysis to \Cref{sec:VR}, where we fully specify the MoME call in \Cref{algo:GHAL} and where  our variance-reduced estimator $\mTt_{k-1, j}$ is introduced and its properties are analyzed in detail. 

\begin{lemma}\label{lemma:basic-GHAL-inequalities}
    Let $\mT:\cE\to \cE$ be a $\gamma$-Lipschitz operator with $\gamma \in (0, 1]$. Fix any outer iteration $k \geq 1$ in \Cref{algo:GHAL} and consider the algorithm states inside it. Let $\gammab \in [\gamma, 1].$ Suppose that for all indices $j \in [J_k]$ of the for loop, we have $\|\mTt_{k-1, j} - \mTt_{k-1, j-1}\| \leq \gammab \norm{\vy_j - \vy_{j-1}}$ and that there exists a sequence $\teps_{k, j}$ such that $\|\mTt_{k-1, j} - \mT(\vy_j)\| \leq \teps_{k, j},$ for all $j \in \{0, 1, \dots, J_k\}.$ Then both of the following two statements hold:
    \begin{align*}
        \|\mTt_{k-1, J_k} - \vxh_k\| &\leq \beta \|\mTt_{k-1, 0} - \vxh_{k-1}\| + \frac{\lambda_k}{1-\lambda_k}\|\vxh_k - \vxh_{k-1}\|, \text{ and}\\
        \|\vxh_k - \vx_*\| &\leq \|\vxh_{k-1} - \vx_*\| + \teps_k,
    \end{align*}
    where $\teps_k := (1-\lambda_k)\sum_{j=0}^{J_k - 1}((1-\lambda_k)\gamma)^{J_k - j -1}\teps_{k, j}.$ In particular, denoting $\lambdab_k = \frac{\lambda_k}{1-\lambda_k},$ we have
    \begin{equation}\label{eq:GHAL-guarantee}
        \|\mT(\vxh_k) - \vxh_k\| \leq \beta \|\mTt_{k-1, 0} - \vxh_{k-1}\| + \teps_{k, J_k} + \lambdab_k\teps_k + 2\lambdab_k\Big(\|\vx_0 - \vx_*\| + \sum_{i=1}^{k-1}\teps_i\Big). 
    \end{equation}
\end{lemma}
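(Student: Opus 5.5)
The plan is to treat each of the three claims directly. The only tools needed are the recursion $\vy_{j+1} = \lambda_k \vy_0 + (1-\lambda_k)\mTt_{k-1,j}$, the two hypotheses (pathwise $\gammab$-Lipschitzness of $\mTt$ along the trajectory, and the accuracy bounds $\teps_{k,j}$), and the choice of $J_k$. Write $\lambda = \lambda_k$ and $J = J_k$, and note that $\vy_0 = \vxh_{k-1}$ and $\vy_J = \vxh_k$.

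\emph{First inequality.} Subtracting consecutive updates gives $\vy_{j+1} - \vy_j = (1-\lambda)(\mTt_{k-1,j} - \mTt_{k-1,j-1})$ for $j \ge 1$. The Lipschitz hypothesis then yields $\norm{\vy_{j+1}-\vy_j} \le (1-\lambda)\gammab \norm{\vy_j - \vy_{j-1}}$. I would introduce the ``virtual'' next iterate $\vy_{J+1} := \lambda \vy_0 + (1-\lambda)\mTt_{k-1,J}$; it is never computed, but it satisfies the same bound because the hypothesis covers the index $J$. Unrolling, and using $\vy_1 - \vy_0 = (1-\lambda)(\mTt_{k-1,0} - \vy_0)$, gives
\[
\norm{\vy_{J+1} - \vy_J} \le (1-\lambda)\big((1-\lambda)\gammab\big)^J \norm{\mTt_{k-1,0} - \vxh_{k-1}}.
\]
Solving the definition of $\vy_{J+1}$ for $\mTt_{k-1,J}$ gives the identity
\[
\mTt_{k-1,J} - \vy_J = \tfrac{1}{1-\lambda}(\vy_{J+1} - \vy_J) + \tfrac{\lambda}{1-\lambda}(\vy_J - \vy_0).
\]
By the triangle inequality, the first term is bounded by $\big((1-\lambda)\gammab\big)^J\norm{\mTt_{k-1,0}-\vxh_{k-1}}$. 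This is at most $\beta\norm{\mTt_{k-1,0}-\vxh_{k-1}}$, because $J \ge \ln\beta/\ln((1-\lambda)\gammab)$ and the base lies in $(0,1)$. This gives the first claim.

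\emph{Second inequality.} I would prove by induction on $j$ that $\norm{\vy_j - \vx_*} \le \norm{\vy_0 - \vx_*} + e_j$, where
\[
e_j := (1-\lambda)\sum_{i=0}^{j-1}((1-\lambda)\gamma)^{j-1-i}\teps_{k,i}.
\]
For the step, use $\vx_* = \lambda\vx_* + (1-\lambda)\mT(\vx_*)$ and $\norm{\mTt_{k-1,j} - \mT(\vx_*)} \le \gamma\norm{\vy_j - \vx_*} + \teps_{k,j}$. Together with $\lambda + (1-\lambda)\gamma \le 1$, these give
\[
\norm{\vy_{j+1}-\vx_*} \le \norm{\vy_0-\vx_*} + (1-\lambda)\gamma e_j + (1-\lambda)\teps_{k,j} = \norm{\vy_0-\vx_*} + e_{j+1}.
\]
Taking $j = J$ gives exactly $\teps_k = e_J$.

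\emph{Final bound \eqref{eq:GHAL-guarantee}.} Write $\norm{\mT(\vxh_k) - \vxh_k} \le \norm{\mTt_{k-1,J} - \vxh_k} + \teps_{k,J}$, then apply the first claim. The remaining term $\norm{\vxh_k - \vxh_{k-1}}$ is bounded by $\norm{\vxh_k - \vx_*} + \norm{\vxh_{k-1}-\vx_*}$. Each of these is controlled by telescoping the second claim over epochs $1,\dots,k$, which gives $\norm{\vxh_{k-1}-\vx_*} \le \norm{\vx_0 - \vx_*} + \sum_{i<k}\teps_i$ and the same bound plus $\teps_k$ for $\vxh_k$. Summing produces $2\lambdab_k(\norm{\vx_0-\vx_*} + \sum_{i<k}\teps_i) + \lambdab_k\teps_k$, with $\lambdab_k = \lambda_k/(1-\lambda_k)$.

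The step I expect to need the most care is the final telescoping. It implicitly requires the hypotheses of the lemma, or at least the accuracy bounds, to hold in all earlier epochs $i < k$ so that the per-epoch drift bounds $\teps_i$ are valid. I would state this explicitly, interpreting $\teps_i$ for $i<k$ as the corresponding quantities from those epochs. The only other subtle point is the virtual-iterate trick in the first claim, which requires the Lipschitz hypothesis at the index $j = J$ itself; this is covered since the hypothesis holds for all $j \in [J_k]$.
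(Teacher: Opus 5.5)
Your proof is correct and follows the paper's argument. For the first claim you use the same virtual iterate $\vy_{J_k+1}$ and the same rearrangement. For the second you use the same one-step recursion for $\norm{\vy_j-\vx_*}$; your invariant $\norm{\vy_j-\vx_*}\le\norm{\vy_0-\vx_*}+e_j$ is a tidier packaging of the paper's unrolling and its check that the coefficient $\rho_k^j+\lambda_k\sum_{i<j}\rho_k^i$ is at most $1$. The paper leaves \eqref{eq:GHAL-guarantee} implicit, and your derivation of it is right, including the remark that the telescoping needs the accuracy bounds (hence the second claim) in every earlier epoch, which is exactly what the nested event $E_k$ supplies when the lemma is invoked in \Cref{thm:master}.
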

\begin{proof}
    The first claim is a direct consequence of \cite[Lemma 1]{diakonikolas2025pushing} and we provide the proof for completeness. In particular, by the definition of $\vy_j$ and the lemma assumption, we have that $\|\vy_1 - \vy_0\| = (1-\lambda_k)\|\mTt_{k-1, 0} - \vy_0\| = (1-\lambda_k)\|\mTt_{k-1, 0} - \vxh_{k-1}\|$, while for $j \geq 1$:
    \begin{align}
        \|\vy_{j+1} - \vy_j\| &= (1-\lambda_k)\|\mTt_{k-1, j} - \mTt_{k-1, j-1}\|\notag\\
        &\leq (1-\lambda_k)\gammab\|\vy_j - \vy_{j-1}\|\notag\\
        &\leq (1-\lambda_k)^{j+1} \gammab^j\|\mTt_{k-1, 0} - \vxh_{k-1}\|,\notag
    \end{align}
    where the first inequality holds by the lemma assumption, and the second inequality is obtained by recursively applying the first one. 
    In particular, $\|\vy_{J_{k+1}} - \vy_{J_k}\| \leq (1-\lambda_k)^{J_k+1} \gammab^{J_k}\|\mTt_{k-1, 0} - \vxh_{k-1}\| \leq (1-\lambda_k)\beta \|\mTt_{k-1, 0} - \vxh_{k-1}\|,$ where the last inequality follows by the choice of $J_k$ in the algorithm statement, and for the analysis only we define $\vy_{J_k + 1} = \lambda_k \vy_0 + (1-\lambda_k)\mTt_{k-1, J_k}$. Further, subtracting $\vy_{J_k}$ from both sides of $\vy_{J_{k+1}} = \lambda_k \vy_0 + (1-\lambda_k)\mTt_{k-1, J_k},$ after elementary algebra, we have $\|\mTt_{k-1, J_k} - \vy_{J_k}\| \leq \frac{1}{1-\lambda_k}\|\vy_{J_{k+1} - \vy_{J_k}}\| + \frac{\lambda_k}{1-\lambda_k}\|\vy_{J_k}- \vy_0\|,$ which leads to the first claim after using $\|\vy_{J_{k+1}} - \vy_{J_k}\| \leq (1-\lambda_k)\beta \|\mTt_{k-1, 0} - \vxh_{k-1}\|$ that we have already derived and $\vy_{J_k} = \vxh_k,$ $\vy_0 = \vxh_{k-1}.$ Note that this first claim did not make use of any probabilistic assumptions.

    For the second claim, assume that $\|\mTt_{k-1, j} - \mT(\vy_j)\| \leq \teps_{k, j},$ for all $j \in\{0, 1, \dots, J_k\},$ which holds by the lemma statement. By the definition of $\vy_{j+1}$ and triangle inequality, we have that for all $j \geq 1,$
    \begin{align}
        \|\vy_{j} - \vx_*\| &\leq \lambda_k\|\vy_0 - \vx_*\| + (1-\lambda_k)\|\mTt_{k-1, j-1} - \mT(\vy_{j-1})\| + (1-\lambda_k)\|\mT(\vy_{j-1}) - \mT(\vx_*)\|\notag\\
        &\leq \lambda_k\|\vy_0 - \vx_*\| + (1-\lambda_k)\teps_{k, j-1} + (1-\lambda_k)\gamma\|\vy_{j-1} - \vx_*\| \label{eq:dist-to-opt-recursive},
    \end{align}
    where we used $\vx_* = \mT(\vx_*),$ $\|\mTt_{k-1, j-1} - \mT(\vy_{j-1})\| \leq \teps_{k, j-1}$, and $\|\mT(\vy_{j-1}) - \mT(\vx_*)\| \leq \gamma \|\vy_{j-1} - \vx_*\| \leq \|\vy_{j-1} - \vx_*\|,$ all of which hold by assumption. Let $\rho_k := (1-\lambda_k)\gamma$. Observe that \eqref{eq:dist-to-opt-recursive} is a recursive inequality for $\|\vy_{j} - \vx_*\|$ and unroll the recursion to get:
    \begin{equation}\notag
        \norm{\vy_j - \vx_*} \leq \big(\rho_k^{j} + \lambda_k\sum_{i=0}^{j-1}\rho_k^{i}\big)\norm{\vy_0 - \vx_*} + (1-\lambda_k)\sum_{i=0}^{j-1}\rho_k^{j-i-1} \teps_{k, i}\big).
    \end{equation}
    Since $\gamma \leq 1,$ we have $\rho_k \leq 1 - \lambda_k$, and so $\rho_k^{j} \leq (1-\lambda_k)^{j}$ and $\sum_{i=0}^{j-1}\rho_k^{i} \leq \sum_{i=0}^{j-1}(1-\lambda_k)^{i} \leq \frac{1 - (1-\lambda_k)^j}{\lambda_k},$ so the term multiplying $\norm{\vy_0 - \vx_*}$ is at most one.   
    It remains to set $j = J_k$, recall that $\vxh_k = \vy_{J_k}$, $\vy_0 = \vxh_{k-1}$, and use the definition of $\teps_k$.  
\end{proof}

%

\section{Clipped Difference Operator and Convergence of VR-GHAL}\label{sec:VR}

In this section, we introduce our clipped variance-reduced estimator $\mTt(\vx)$ and analyze its properties that determine our stochastic oracle complexity results. One of the core contributions is an estimator that is based on clipping the iterate-difference stochastic oracle $\Delta(\vx, \vy) = \vtau(\vx; \vxi_1) - \vtau(\vy; \vxi_2).$ To our knowledge, while clipped (or trimmed) mean estimators based on constant-value or fixed-schedule clipping of the random variable/stochastic oracle $\vtau(\vx; \vxi)$ itself are common in the literature \cite{gorbunov2020clipping,nguyen2023clippedsgd,cutkosky2021highprob,zhang2022parameter,gorbunov2024composite,sadiev2023highprob}, we are not aware of estimators that explicitly utilize the Lipschitzness of the expected operator to perform the clipping dependent on the iterate-distance bound implied by Lipschitzness while yielding an implementable algorithm. 

We begin the section by providing general, auxiliary results for the properties of the introduced \emph{clipped difference} operator (\Cref{sec:clipping}). We then define our variance-reduced estimator and analyze its properties in \Cref{sec:VR+master-thm}, leading to our Master Theorem (\Cref{thm:master}). In \Cref{sec:gen-bnd-var}--\Cref{sec:other-models} we then apply the Master Theorem to the different stochastic oracle models (bounded variance only, Lipschitz in expectation, other models) to yield quantitative sample complexity results. 

\subsection{Clipped Difference Operator}\label{sec:clipping}

A basic ingredient of our stochastic estimator is a truncation (or clipping) operator, which differs from the use of ``gradient clipping'' in the recent literature in two important aspects. First, unlike the recent literature on ``gradient clipping,'' we do not perform the clipping operation directly on the operator (or on $\mTt(\vx) - \vx$), but on a \emph{difference} operator $\Delta(\vx, \vy)$ that satisfies $\E[\Delta(\vx, \vy)] = \mT(\vx) - \mT(\vy).$ For instance, an operator defined by
\begin{equation}\label{eq:diff-op-ex}
    \Delta(\vx, \vy) := \vtau(\vx; \vxi_1) - \vtau(\vy; \vxi_2),
\end{equation}
or by an empirical average of such operators---where we take $\vxi_1 = \vxi_2 \sim \cD$ whenever \Cref{asspt:multi-query-oracle} is made and otherwise draw $\vxi_1, \vxi_2$ i.i.d.\ from $\cD$---would satisfy such a requirement. Second, we crucially leverage the Lipschitzness of $\mT$ to determine how to clip the difference operator $\Delta.$ In particular, the clipped value of the $\Delta$ operator is defined by
\begin{equation}\label{eq:clipped-diff-op}
    \cldel(\vx, \vy) = \min\Big\{1, \, \frac{\gammab\|\vx - \vy\|}{\|\Delta(\vx, \vy)\|}\Big\}\Delta(\vx, \vy),
\end{equation}
where we interpret the above equation as assigning $\cldel(\vx, \vy) = \vzero$ whenever $\Delta(\vx, \vy) = \vzero.$

The following lemma summarizes the basic properties of this clipped difference estimator that will be used in the subsequent analysis. The proof is provided in \Cref{appx:omitted-clipping}. For similar properties of the constant-clipped gradient estimator, see, e.g., \cite[Lemma 2.1]{nguyen2023clippedsgd}.

\begin{restatable}{lemma}{lemmaclipping}\label{lemma:clipped-op-properties}
    Given a stochastic difference operator $\Delta(\vx, \vy)$ that satisfies $\|\E[\Delta(\vx, \vy)]\| \leq \gamma \|\vx - \vy\|$ for some $\gamma > 0$ and let the clipped difference operator be defined by \eqref{eq:clipped-diff-op} for some $\gammab \geq \gamma$. Given any $\vx, \vy,$ suppose further that $\E[\|\Delta(\vx, \vy) - \E[\Delta(\vx, \vy)]\|^2] \leq \sigma_{\vx \vy}^2 < \infty$. Then:
    \begin{enumerate}
        \item $\|\cldel(\vx, \vy) - \E[\cldel(\vx, \vy)]\| \leq 2\gammab \|\vx - \vy\|;$
        \item $\|\E[\cldel(\vx, \vy)] - \E[\Delta(\vx, \vy)]\| \leq \sigma_{\vx\vy}$; and%
        \item $\E[\|\cldel(\vx, \vy) - \E[\cldel(\vx, \vy)]\|^2] \leq  9 \sigma_{\vx\vy}^2.$
    \end{enumerate}
\end{restatable}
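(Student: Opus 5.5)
The plan is to work pointwise with $\Delta := \Delta(\vx,\vy)$, $\bm{\mu} := \E[\Delta]$, $r := \gammab\norm{\vx-\vy}$ and $\bm{c} := \cldel(\vx,\vy)$. The only structural facts needed are $\norm{\bm{c}} \leq r$ almost surely, which holds by construction in \eqref{eq:clipped-diff-op}, and $\norm{\bm{\mu}} \leq \gamma\norm{\vx-\vy} \leq r$, which holds since $\gammab \geq \gamma$. I will also use the Bochner-integral facts $\norm{\E[\bm{X}]} \leq \E\norm{\bm{X}}$ and linearity of expectation. I will avoid any Hilbert-space tools, such as the variance being minimized at the mean or the projection onto a ball being nonexpansive, since the norm is general.

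\emph{Item 1.} Since $\norm{\bm{c}} \leq r$ almost surely, Jensen's inequality for the Bochner integral gives $\norm{\E[\bm{c}]} \leq \E\norm{\bm{c}} \leq r$. The triangle inequality then gives $\norm{\bm{c} - \E[\bm{c}]} \leq 2r = 2\gammab\norm{\vx-\vy}$.

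\emph{Item 2.} The key pointwise observation is that $\Delta - \bm{c} = \big(1 - \min\{1, r/\norm{\Delta}\}\big)\Delta$. Its norm therefore equals $(\norm{\Delta} - r)_+$. Because $r \geq \norm{\bm{\mu}}$, we have
\[
(\norm{\Delta} - r)_+ \leq (\norm{\Delta} - \norm{\bm{\mu}})_+ \leq \norm{\Delta - \bm{\mu}}
\]
by the reverse triangle inequality. Taking expectations yields
\[
\norm{\E[\bm{c}] - \bm{\mu}} = \norm{\E[\bm{c} - \Delta]} \leq \E\norm{\Delta - \bm{\mu}} \leq \sqrt{\E\norm{\Delta-\bm{\mu}}^2} \leq \sigma_{\vx\vy}.
\]

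\emph{Item 3.} Here I will combine the two pointwise bounds. First, $\norm{\bm{c} - \bm{\mu}} \leq \norm{\bm{c} - \Delta} + \norm{\Delta - \bm{\mu}} \leq 2\norm{\Delta - \bm{\mu}}$, using the bound from Item 2. Hence
\[
\norm{\bm{c} - \E[\bm{c}]} \leq \norm{\bm{c} - \bm{\mu}} + \norm{\bm{\mu} - \E[\bm{c}]} \leq 2\norm{\Delta-\bm{\mu}} + \sigma_{\vx\vy}.
\]
Minkowski's inequality in $L^2(\Omega)$ for real random variables then gives
\[
\big(\E\norm{\bm{c}-\E[\bm{c}]}^2\big)^{1/2} \leq 2\big(\E\norm{\Delta-\bm{\mu}}^2\big)^{1/2} + \sigma_{\vx\vy} \leq 3\sigma_{\vx\vy},
\]
which is the claimed bound of $9\sigma_{\vx\vy}^2$.

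The main obstacle is Item 3 in the Banach setting. The usual argument that the clipped variable has variance at most that of the unclipped one relies on the Hilbert structure, through projection onto a ball or the variance being minimized at the mean, and neither is available here. The constant $9$ signals that the intended route is instead the crude triangle-plus-Minkowski argument above. That argument hinges on the inequality $(\norm{\Delta}-r)_+ \leq \norm{\Delta-\bm{\mu}}$, which is exactly where the Lipschitz-scaled clipping radius $r \geq \norm{\bm{\mu}}$ is used. The degenerate case $\Delta = \vzero$ is harmless, since then $\bm{c} = \Delta$.
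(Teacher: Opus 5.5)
Your proof is correct and follows essentially the same route as the paper: Item 2 rests on the same pointwise bound $(\norm{\Delta}-r)_+ \le \norm{\Delta-\bm{\mu}}$ obtained from $r \ge \norm{\bm{\mu}}$. Item 3 uses the same three-term decomposition into $\bm{c}-\Delta$, $\Delta-\bm{\mu}$, $\bm{\mu}-\E[\bm{c}]$; you combine the terms with Minkowski in $L^2$, whereas the paper uses $\norm{a+b+c}^2 \le 3(\norm{a}^2+\norm{b}^2+\norm{c}^2)$, and both give the constant $9$.
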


\subsection{Variance-Reduced Estimator and Master Theorem}
\label{sec:VR+master-thm}

Our stochastic variance-reduced estimator $\mTt^{(k)}_j$ is defined recursively over each epoch, as described below. Given a fixed epoch $k \geq 1,$ let $\vy_j$ denote iterates of the inner for loop (see \Cref{algo:GHAL}). For $j \geq 1,$ define the minibatch difference operator $\Delta_{k, j}$ as
\begin{equation}\label{eq:delta-kj-def}
    \Delta_{k, j}^{(\ell)}:= \frac{1}{m_{k, j}}\sum_{i=1}^{m_{k, j}}(\vtau(\vy_j, \vxi_{i}^{(1)}) - \vtau(\vy_{j-1}, \vxi_{i}^{(2)})),
\end{equation}
where, as described in \Cref{sec:clipping}, $\vxi_{i}^{(1)} = \vxi_i^{(2)}$ under \Cref{asspt:multi-query-oracle} and are otherwise i.i.d.; all samples $\vxi_1^{(\iota)}, \vxi_2^{(\iota)}, \dots, \vxi_{m_{k, \iota}}^{(\iota)}$ for $\iota \in \{1, 2\}$ are i.i.d. Notice that $\Delta_{k, j}^{(\ell)}$ is itself a random variable. We draw $n_k$ i.i.d.\ copies $\Delta_{k, j}^{(\ell)},$ $\ell \in [n_k]$, for each $j \geq 1$ and define the stochastic variance-reduced operator $\mTt_{k, j}$ as:
\begin{equation}\label{eq:VR-estimator}
    \mTt_{k-1, j} := \begin{cases}
        \mTh_{k-1}(\vy_0; \epsilon_k/2, \delta_{k}/2) = \mTh_{k-1}(\vxh_{k-1}; \epsilon_k/2, \delta_{k}/2), &\text{ if } j = 0,\\
        \mTt_{k-1, j-1} + \frac{1}{n_k}\sum_{\ell =1}^{n_k}\cl\Delta_{k, j}^{(\ell)}, &\text{ if } j \geq 1.
    \end{cases}
\end{equation}
Above, $\mTh_{k-1}$ can be any high-probability estimator for $\mT(\vxh_{k-1})$ that ensures $\|\mT(\vxh_{k-1}) - \mTh_{k-1}\| \leq (\epsilon_k/2)\sqrt{\kappa_\cE}\sigma $ with probability at least $1-\delta_{k}/2.$ For concreteness, let $\mTh_{k} :=\mathrm{MoME}(\epsilon_{k+1}/2, \delta_{k+1}/2, \vxh_{k})$, as in \Cref{algo:GHAL}. In the algorithm, $\mathrm{MoME}(\epsilon_{k+1}/2, \delta_{k+1}/2, \vxh_{k})$ refers to the call of the median-of-means estimator discussed in \Cref{sec:mom} for stochastic estimation on $\mT(\vxh_k)$ based on calls to $\vtau(\vxh_k; \vxi_i)$, $i \in [N_k],$ for failure probability $\delta_{k+1}/2$ and target error $\|\mTt_{k, 0} - \mT(\vxh_k)\| \leq (\eps_{k+1}/2)\sqrt{\kappa_\cE}\sigma.$ Based on \Cref{prop:MoME-final}, $N_k = \cO(\frac{\ln({1}/{\delta_k})}{\eps_k^2})$ oracle calls to $\vtau$ suffice.  %

In the rest of the section, we establish properties of the estimator \eqref{eq:VR-estimator} that are needed for carrying out our analysis. The first property used in \Cref{algo:GHAL}---that $ \|{\mTt_{k-1, j} - \mTt_{k-1, j-1}}\| \leq \gammab\norm{\vy_{j} - \vy_{j-1}}$ for $j \geq 1$---is immediate by its definition, since $\mTt_{k-1, j} - \mTt_{k-1, {j-1}} = \frac{1}{n_k}\sum_{\ell =1}^{n_k}\cl\Delta_{k, j}^{(\ell)}$, where each $\cl\Delta_{k, j}^{(\ell)}$ is a clipped difference operator (thus it satisfies $\|\cl\Delta_{k, j}^{(\ell)}\| \leq \gammab\norm{\vy_j - \vy_{j-1}}$). 

%

The less immediate property concerns bounding $\|\mTt_{k-1, j} - \mT(\vy_j)\|$ with high probability for $j \geq 1$ (for $j = 0$, the result is immediate, as $\mTt_{k-1, 0}$ is a high-probability estimator of $\mT(\vy_0)$ by its definition), which requires more technical effort.  This result is summarized in the following proposition.

\begin{proposition}\label{prop:variance-of-estimator}
    Let $(\cE, \norm{\cdot})$ be a quadratically smoothable space (\Cref{asspt:Banach}). Fix $k \geq 1$ and consider the $k^{\mathrm{th}}$ epoch of GHAL (\Cref{algo:GHAL}). Let $\cF_{k, i}^{(\ell)}$ denote the natural filtration, containing all algorithm randomness up to and including epoch $k$,  inner iteration $i$, and random draw $\ell$ ($i = 0$ is the beginning of the epoch, and $\ell=0$ denotes the state before any samples are drawn). Suppose that $\Rt_{k-1} := \|\mTt_{k-1, 0} - \vy_0\| < \infty$ and that $\|\mTt_{k-1, 0} - \mT(\vy_{0})\| \leq (\eps_k/2)\sqrt{\kappa_\cE}\sigma $. Let $\E[\|\Delta_{k, i}^{(\ell)} - \E[\Delta_{k, i}^{(\ell)}|\cF_{k, i}^{(0)}]\|^2|\cF_{k, i}^{(0)}] \leq \sigma_{k, i}^2 < \infty$ a.s., $\forall \ell \in [n_k]$, where $\sigma_{k, i}^2$ is $\cF_{k, 0}^{(0)}$-measurable. Then,  conditionally on $\cF_{k, 0}^{(0)}$, with probability $1-\delta_k/2$, the following holds simultaneously for all $j \in  [J_k]$:
    \begin{equation}\notag
    \begin{aligned}
        \|\mTt_{k-1, j} - \mT(\vy_j)\| \leq (\eps_k/2)\sqrt{\kappa_\cE}\sigma + \sum_{i=1}^{j} \sigma_{k, i} + 3\sqrt{\frac{2\kappa_\cE \ln(4/\delta_k)}{n_k}\sum_{i=1}^{J_k} \sigma_{k, i}^2} + \frac{2C(1-\lambda_k)\gammab \Rt_{k-1} \ln\big({4}/{\delta_k}\big)}{3 n_k}. 
    \end{aligned}
    \end{equation}
\end{proposition}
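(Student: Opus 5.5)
The plan is to unroll the recursion \eqref{eq:VR-estimator} and split the error into three pieces: the initial estimation error, an accumulated clipping bias, and a martingale term. For $j \geq 1$ we write
\begin{align*}
\mTt_{k-1, j} - \mT(\vy_j) ={}& \big(\mTt_{k-1,0} - \mT(\vy_0)\big) + \sum_{i=1}^{j}\Big(\E\big[\cl\Delta_{k,i}^{(1)}\,\big|\,\cF_{k,i}^{(0)}\big] - \big(\mT(\vy_i) - \mT(\vy_{i-1})\big)\Big)\\
&+ \sum_{i=1}^{j}\frac{1}{n_k}\sum_{\ell=1}^{n_k}\Big(\cl\Delta_{k,i}^{(\ell)} - \E\big[\cl\Delta_{k,i}^{(\ell)}\,\big|\,\cF_{k,i}^{(0)}\big]\Big).
\end{align*}
The expectation is the same for every $\ell$, since the copies are i.i.d.\ given $\cF_{k,i}^{(0)}$. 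Given $\cF_{k,i}^{(0)}$, the point $\vy_i$ is already determined, because $\vy_i$ depends only on $\mTt_{k-1,i-1}$. Hence $\E[\Delta_{k,i}^{(\ell)}|\cF_{k,i}^{(0)}] = \mT(\vy_i)-\mT(\vy_{i-1})$, which has norm at most $\gamma\norm{\vy_i-\vy_{i-1}}$.

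\textbf{First two pieces.} The first piece is at most $(\eps_k/2)\sqrt{\kappa_\cE}\sigma$ by hypothesis. For each summand of the second piece we apply \Cref{lemma:clipped-op-properties}(2) conditionally on $\cF_{k,i}^{(0)}$, with $\vx=\vy_i$ and $\vy=\vy_{i-1}$; this bounds it by $\sigma_{k,i}$. Together these give the terms $(\eps_k/2)\sqrt{\kappa_\cE}\sigma + \sum_{i=1}^j\sigma_{k,i}$.

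\textbf{Martingale term.} We order the pairs $(i,\ell)$ lexicographically and index the increments by this order,
\[
\veta_{i,\ell} := \frac{1}{n_k}\Big(\cl\Delta_{k,i}^{(\ell)} - \E\big[\cl\Delta_{k,i}^{(\ell)}\,\big|\,\cF_{k,i}^{(0)}\big]\Big).
\]
These form a martingale difference sequence with respect to the filtration $\cF_{k,i}^{(\ell)}$. Conditional independence of the copies gives $\E[\veta_{i,\ell}|\cF_{k,i}^{(\ell-1)}]=\vzero$. By \Cref{lemma:clipped-op-properties}(3), $\E[\norm{\veta_{i,\ell}}^2|\cF_{k,i}^{(\ell-1)}]\leq 9\sigma_{k,i}^2/n_k^2$, and this bound is $\cF_{k,0}^{(0)}$-measurable as \Cref{prop:Banach-concentration} requires. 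Summing over the $J_k n_k$ increments gives total variance $9\sum_{i\le J_k}\sigma_{k,i}^2/n_k$. Applying \Cref{prop:Banach-concentration} with failure probability $\delta_k/2$, so that $\ln(2/\delta)=\ln(4/\delta_k)$, yields the term $3\sqrt{2\kappa_\cE\ln(4/\delta_k)\sum_i\sigma_{k,i}^2/n_k}$. Because the proposition holds simultaneously for all prefixes, we read it off at the prefixes ending at $\ell=n_k$, which covers all $j\in[J_k]$ at once with no union bound.

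\textbf{Main obstacle: the almost-sure bound.} \Cref{prop:Banach-concentration} needs an $\cF_0$-measurable almost-sure bound $b$ on each increment. \Cref{lemma:clipped-op-properties}(1) only gives $\norm{\veta_{i,\ell}}\le 2\gammab\norm{\vy_i-\vy_{i-1}}/n_k$, which is random. To make it deterministic, we reuse the pathwise contraction from the proof of \Cref{lemma:basic-GHAL-inequalities}. Clipping holds deterministically, so
\[
\norm{\vy_i-\vy_{i-1}}\le (1-\lambda_k)^i\gammab^{i-1}\Rt_{k-1}\le(1-\lambda_k)\Rt_{k-1}.
\]
Here $\Rt_{k-1}$ is $\cF_{k,0}^{(0)}$-measurable. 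This gives $b=2(1-\lambda_k)\gammab\Rt_{k-1}/n_k$, which produces the final term $2C(1-\lambda_k)\gammab\Rt_{k-1}\ln(4/\delta_k)/(3n_k)$. Since everything is conditional on $\cF_{k,0}^{(0)}$, the quantities $\Rt_{k-1}$ and $\sigma_{k,i}$ act as constants there, and the hypotheses of \Cref{prop:Banach-concentration} are met. Combining the three pieces by the triangle inequality then gives the claim.
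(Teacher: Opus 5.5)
Your proposal is correct and follows essentially the same route as the paper. It uses the same three-way decomposition obtained by unrolling the recursion, bounds the clipping bias by Part 2 of \Cref{lemma:clipped-op-properties}, and applies \Cref{prop:Banach-concentration} to the lexicographically indexed clipped increments with variance proxy $9\sigma_{k,i}^2$ and the deterministic bound $2\gammab(1-\lambda_k)\Rt_{k-1}$ from the pathwise contraction. The only cosmetic difference is that you fold the $1/n_k$ normalization into the increments instead of dividing afterwards, which gives identical constants.
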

\begin{proof}
    As a first step, we use the recursive nature of the definition of $\mTt_{k-1, j}$ to write it in a form that includes a martingale (for which we can use \Cref{prop:Banach-concentration}) and other terms that can be bounded using properties of clipped operators stated in \Cref{lemma:clipped-op-properties}. In particular, observe that we can write for $j \geq 1$
    \begin{align}
        \mTt_{k-1, j} - \mT(\vy_j) &= \mTt_{k-1, j-1} + \frac{1}{n_k}\sum_{\ell = 1}^{n_k}\cldel_{k, j}^{(\ell)} - \mT(\vy_j)\notag\\
        &= \mTt_{k-1, j-1} - \mT(\vy_{j-1}) + \frac{1}{n_k}\sum_{\ell = 1}^{n_k}\cldel_{k, j}^{(\ell)} - (\mT(\vy_j) - \mT(\vy_{j-1}))\notag\\
        &= \mTt_{k-1, 0} - \mT(\vy_{0}) + \frac{1}{n_k}\sum_{i=1}^j \sum_{\ell =1}^{n_k}\big(\cldel_{k, i}^{(\ell)} - (\mT(\vy_i) - \mT(\vy_{i-1}))\big)\notag\\
        &= \mTt_{k-1, 0} - \mT(\vy_{0}) + \frac{1}{n_k}\sum_{i=1}^j  \sum_{\ell = 1}^{n_k}\big(\cldel_{k, i}^{(\ell)} - \E[\cldel_{k, i}^{(\ell)} |\cF_{k, i}^{(0)}]\big) \notag\\
         &\quad+ \sum_{i'=1}^j\frac{1}{n_k}\sum_{\ell = 1}^{n_k}\big(\E[\cldel_{k, i'}^{(\ell)}|\cF_{k, i'}^{(0)}] - \E[\Delta_{k, i'}^{(\ell)}|\cF_{k, i'}^{(0)}]\big), \notag
    \end{align}
    where the third equality comes from unrolling the recursion from the second equality and the last equality uses $\mT(\vy_i) - \mT(\vy_{i-1}) = \E[\Delta_{k, i}^{(\ell)}|\cF_{k, i}^{(0)}],$ which holds by the definition of $\Delta_{k, i}^{(\ell)}$. Thus, by the triangle inequality,
    \begin{equation}\label{eq:vr-bnd-1}
    \begin{aligned}
        \|\mTt_{k-1, j} - \mT(\vy_j)\| \leq \; & \|\mTt_{k-1, 0} - \mT(\vy_{0})\|
        + \frac{1}{n_k}\Big\| \sum_{i=1}^j \sum_{\ell = 1}^{n_k} \big(\cldel_{k, i}^{(\ell)} - \E[\cldel_{k, i}^{(\ell)}|\cF_{k, i}^{(0)}]\big) \Big\|\\
        &+ \frac{1}{n_k}\sum_{\ell = 1}^{n_k}\sum_{i'=1}^j\big\|\E[\cldel_{k, i'}^{(\ell)}|\cF_{k, i'}^{(0)}] - \E[\Delta_{k, i'}^{(\ell)}|\cF_{k, i'}^{(0)}]\big\|.
    \end{aligned}
    \end{equation}
    To prove the claimed bound, we bound each of the terms on the right-hand side of \eqref{eq:vr-bnd-1} separately. First, by the proposition statement, we have that 
    \begin{equation}\label{eq:vr-var-1}
        \|\mTt_{k-1, 0} - \mT(\vy_{0})\| \leq (\eps_k/2)\sqrt{\kappa_\cE}\sigma .
    \end{equation} 
    The last summation contains conditional expectations that are $\cF_{k, i'}^{(0)}$-measurable, thus we can bound each summation term a.s.\ using \Cref{lemma:clipped-op-properties}, Part 2, applied conditionally on $\cF_{k, i'}^{(0)}$ by $\big\|\E[\cldel_{k, i'}^{(\ell)}|\cF_{k, i'}^{(0)}] - \E[\Delta_{k, i'}^{(\ell)}|\cF_{k, i'}^{(0)}]\big\| \leq \sigma_{k, i'}.$ 
Thus, we can conclude that:
\begin{equation}\label{eq:dif-of-exps-sum}
    \frac{1}{n_k}\sum_{\ell = 1}^{n_k}\sum_{i'=1}^j\big\|\E[\cldel_{k, i'}^{(\ell)}|\cF_{k, i'}^{(0)}] - \E[\Delta_{k, i'}^{(\ell)}|\cF_{k, i'}^{(0)}]\big\| \leq %
    \sum_{i=1}^j \sigma_{k, i}. %
\end{equation}

    To apply  \Cref{prop:Banach-concentration}, we need to verify its assumptions. Recall that $\cF_{k, i}^{(\ell)}$ denotes the natural filtration, containing all algorithm randomness up to and including epoch $k$,  inner iteration $i$, and random draw $\ell$. Note that $\cF_{k, i-1}^{(n_k)} = \cF_{k, i}^{(0)}.$  Then, $\E[\cldel_{k, i}^{(1)} - \E[\cldel_{k, i}^{(1)}|\cF_{k, i}^{(0)}]|\cF_{k, i-1}^{(n_k)}] = \vzero$ and $\E[\cldel_{k, i}^{(\ell)} - \E[\cldel_{k, i}^{(\ell)}|\cF_{k, i}^{(0)}]|\cF_{k, i}^{(\ell - 1)}] = \vzero$ for $\ell \geq 2$ is immediate. Thus, using indexing $\iota = (i-1)n_k + \ell$, 
    \[\cM_\iota := \sum_{i'=1}^{i-1}\sum_{\ell' = 1}^{n_k} \big(\cldel_{k, i'}^{(\ell')} - \E[\cldel_{k, i'}^{(\ell')}|\cF_{k, i'}^{(0)}]\big) + \sum_{\ell'' = 1}^{\ell} \big(\cldel_{k, i}^{(\ell'')} - \E[\cldel_{k, i}^{(\ell'')}|\cF_{k, i}^{(0)}]\big)\] 
    defines a martingale. 
    Further, by \Cref{lemma:clipped-op-properties}, Part 1, for all $\ell \in [n_k]$:
    \begin{align}
        \|\cldel_{k, i}^{(\ell)} - \E[\cldel_{k, i}^{(\ell)}|\cF_{k, i}^{(0)}]\| &\leq 2\gammab\|\vy_i - \vy_{i-1}\| \notag\\
        &\leq 2\gammab \|\mTt_{k-1, 0} - \vy_0\|(1-\lambda_k)^i\notag\\
        &= 2\gammab \Rt_{k-1}(1-\lambda_k)^i =: b_i,\notag%
    \end{align}
    where in the second inequality we dropped the factor $\gamma^i$, since $\gamma \leq 1.$ 
    It remains to bound the variance, which follows by \Cref{lemma:clipped-op-properties}, Part 3:
     \begin{align}\notag%
       \E[\|\cldel_{k, i}^{(\ell)} - \E[\cldel_{k, i}^{(\ell)}|\cF_{k, i}^{(0)}]\|^2|\cF_{k, i}^{(0)}] &\leq 9\sigma_{k, i}^2.
    \end{align}
    Thus, \Cref{prop:Banach-concentration} implies that with probability $1- \delta_k/2,$ for all $j \in [J_k]$ simultaneously,
    \begin{equation}\label{eq:vr-martingale}
       \norm{\frac{1}{n_k}\sum_{i=1}^j  \sum_{\ell = 1}^{n_k}\big(\cldel_{k, i}^{(\ell)} - \E[\cldel_{k, i}^{(\ell)}|\cF_{k, i}^{(0)}]\big)} \leq 3\sqrt{\frac{2\kappa_\cE \ln(4/\delta_k)}{n_k}\sum_{i=1}^{J_k} \sigma_{k, i}^2} + \frac{2C(1-\lambda_k)\gammab \Rt_{k-1} \ln\big({4}/{\delta_k}\big)}{3 n_k}.
    \end{equation}
    To complete the proof, it remains to combine \eqref{eq:vr-bnd-1} with \eqref{eq:vr-var-1}, \eqref{eq:dif-of-exps-sum}, and \eqref{eq:vr-martingale}. 
\end{proof}

Our main result, \Cref{thm:master}, is based on the following condition on the variance of clipped difference estimators, stated in \Cref{asspt:variance-minibatch}. To obtain specific results on sample complexity, in the remaining subsections, we then provide sufficient bounds on the sample sizes $m_{k, j}$ for which \Cref{asspt:variance-minibatch} holds. 

\begin{assumption}\label{asspt:variance-minibatch}
    The per-iteration sample sizes $m_{k, j}$'s are chosen such that 
    \begin{equation}\label{eq:per-iteration-variance}
        \sum_{i=1}^{J_k} \sigma_{k, i} + 3\sqrt{{2\kappa_\cE\ln(4/\delta_k)}\sum_{i=1}^{J_k} \sigma_{k, i}^2} \leq \eps_k (a_0 + a_1\sqrt{k}), \quad \forall k \geq 1,
    \end{equation}
    where $a_0, a_1$ are potentially dependent on (fixed) problem parameters $\sigma, \delta, \kappa_\cE$.
\end{assumption}

\begin{theorem}[Master Theorem]\label{thm:master}
    Let $(\cE, \norm{\cdot})$ be a quadratically smoothable space (\Cref{asspt:Banach}). Given  input parameters $\eps > 0, \delta \in (0, 1)$ and access to a stochastic oracle $\vtau(\vx; \vxi),$ $\vxi \sim \cD,$ for a $\gamma$-Lipschitz operator $\mT$ with $\gamma \leq 1$ that satisfies \eqref{eq:stochastic-opt-assmpts}, consider VR-GHAL (\Cref{algo:GHAL}) iterations. Let $n_k = \max\big\{1, \lceil \frac{2(1-\lambda_k)\gammab \Rt_{k-1} \ln({4}/{\delta_k})}{3 \epsilon_k} \rceil\big\}$ and suppose that \Cref{asspt:variance-minibatch} holds.   
    Let $K$ satisfy
\begin{equation}\label{eq:generic-k}
        \begin{aligned}
            K \geq \Big\lceil \max_{p \in \{0, 1, 2, 5/2\}}\Big\{L_p + p\log_{1/\beta}L_p + \cO_\beta(1) \Big\} \Big\rceil,
        \end{aligned}
    \end{equation}
where $\cO_\beta(1)$ is a universal constant whenever $\beta$ is a constant away from one; $L_p = \max\{1, \log_{1/\beta}(8A_p/\epsilon)\},$ and constants $A_p$ are defined as
    \begin{equation}\label{eq:C-params}
    \begin{aligned}
        A_0 &= \|\mT(\vxh_0) - \vxh_0\| + \beta \eps_0 \sqrt{\kappa_\cE}\sigma /2, \;\; && A_1 = 2\lambdab_0 \norm{\vx_* - \vx_0} + \beta \eps_0\sqrt{\kappa_\cE}\sigma /2,\\
        A_2 &= \eps_0(C + \sqrt{\kappa_\cE}\sigma/2 + a_0),\;\; && A_{5/2} = (4\sqrt{2}/5)\eps_0 a_1. 
    \end{aligned}
\end{equation}

Then, with probability at least $1-\delta$, the output $\vxh_K$ of \Cref{algo:GHAL} satisfies
$\|\mT(\vxh_K) - \vxh_K\| \leq \epsilon$. Moreover, the event on which the guarantee holds may be chosen independently of $K$; hence, on an event of probability at least $1-\delta$, the bound holds for every epoch $k$ satisfying the inequality in \eqref{eq:generic-k}.
\end{theorem}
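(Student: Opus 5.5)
We will work on a single good event and run an induction over epochs. Let $\cE_k$ be the event that the MoME estimate $\mTt_{k,0}$ is accurate, $\|\mTt_{k,0}-\mT(\vxh_k)\|\le(\eps_{k+1}/2)\sqrt{\kappa_\cE}\sigma$, together with the event of \Cref{prop:variance-of-estimator} for epoch $k$. Each of these has conditional probability at least $1-\delta_k/2$ given the past, so a union bound over $k\ge 0$ gives failure probability at most $\sum_{k\ge0}\delta_k=\delta_0/(1-\beta)=\delta/\beta\cdot$(scaled). The initialization $\delta_0=(1-\beta)\delta/\beta$ with $\delta_k=\beta^k\delta_0$ is designed exactly so that this sum is at most $\delta$. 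Since this event does not depend on $K$, the anytime claim follows. On this event, \Cref{prop:variance-of-estimator}, \Cref{asspt:variance-minibatch}, and the choice of $n_k$ give for all $j\in\{0,\dots,J_k\}$
\[
\teps_{k,j}\le \eps_k\big(\sqrt{\kappa_\cE}\sigma/2 + a_0+a_1\sqrt{k}+1\big).
\]
The last $+1$ comes from $n_k\ge 2(1-\lambda_k)\gammab\Rt_{k-1}\ln(4/\delta_k)/(3\eps_k)$, which absorbs the $\Rt_{k-1}$ term (multiplied by $C$, hence the $C$ in $A_2$). The Lipschitz hypothesis of \Cref{lemma:basic-GHAL-inequalities} holds pathwise by clipping. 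Since $(1-\lambda_k)\sum_j\rho_k^{J_k-j-1}\le1$, we also get $\teps_k\le \eps_k(C+\sqrt{\kappa_\cE}\sigma/2+a_0+a_1\sqrt k)$.

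Next, plug these into \eqref{eq:GHAL-guarantee}. Write $r_k:=\|\mTt_{k,0}-\vxh_k\|\le \|\mT(\vxh_k)-\vxh_k\|+\eps_{k+1}\sqrt{\kappa_\cE}\sigma/2$. Then
\[
r_k\le \beta r_{k-1} + c\,\eps_k(1+\sqrt k) + 2\lambdab_k\Big(\|\vx_0-\vx_*\|+\sum_{i<k}\teps_i\Big).
\]
Here $\eps_k=\lambdab_k=\beta^k$, and $\sum_{i<k}\teps_i\lesssim \sum_{i<k}\beta^i(1+\sqrt i)=O(1)$, which is bounded uniformly. Unrolling the linear recursion $r_k\le\beta r_{k-1}+\beta^k g_k$ gives $r_k\le\beta^k(r_0+\sum_{i\le k}g_i)$. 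The contributions are as follows. The $r_0$ term gives $\beta^kA_0$. The $\|\vx_0-\vx_*\|$ term summed over $k$ epochs gives $k\beta^kA_1$. The constant-order noise gives roughly $k\beta^k A_2$, and a $k^2\beta^k$ term arises from $\lambdab_k\sum_{i<k}\teps_i$ if that sum is bounded crudely. The $\sqrt i$ terms give $\sum_{i\le k}\sqrt i\le \tfrac23k^{3/2}$, and crudely they give $k^{5/2}\beta^kA_{5/2}$ (where the $4\sqrt2/5$ comes from summing $i^{3/2}$). This yields
\[
\|\mT(\vxh_K)-\vxh_K\|\le \sum_{p\in\{0,1,2,5/2\}}K^p\beta^K A_p\cdot O(1).
\]
Requiring each term to be at most $\eps/4$ amounts to $\beta^{K}K^p\le \eps/(8A_p)$. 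This is solved by $K\ge L_p+p\log_{1/\beta}L_p+\cO_\beta(1)$, using the standard inversion of $x\ge\log(a)+p\log x$.

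The main obstacle is the circularity: $n_k$ depends on $\Rt_{k-1}$, and the distance term $\|\vxh_k-\vx_*\|$ can drift. This is handled above, since the second claim of \Cref{lemma:basic-GHAL-inequalities} bounds the drift by the summable $\teps_i$. One must, however, ensure that \Cref{prop:variance-of-estimator} is applied conditionally with $\Rt_{k-1}$ finite and $\cF_{k,0}^{(0)}$-measurable, which holds because $n_k$ is chosen after $\mTt_{k-1,0}$ is computed. The delicate bookkeeping is getting the exact constants $A_p$. I would carry the induction hypothesis $r_k\le\beta^k\sum_p c_pk^pA_p$ explicitly to track them.
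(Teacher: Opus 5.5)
There is a genuine gap in how you control the drift. Your inequality $(1-\lambda_k)\sum_{j}\rho_k^{J_k-j-1}\le 1$ is false in the nonexpansive regime. For $\gamma=\gammab=1$ we have $\rho_k=1-\lambda_k$, and the choice of $J_k$ gives $\rho_k^{J_k}\le\beta$. Hence
\[
(1-\lambda_k)\sum_{i=0}^{J_k-1}\rho_k^{i}=(1-\lambda_k)\frac{1-\rho_k^{J_k}}{\lambda_k}\ge\frac{(1-\beta)(1-\lambda_k)}{\lambda_k},
\]
which is of order $1/\lambda_k$, not $1$. Let $G_k:=C+\sqrt{\kappa_\cE}\sigma/2+a_0+a_1\sqrt{k}$. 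The correct bound, which the paper uses, is $\teps_k\le\frac{1-\lambda_k}{\lambda_k}\eps_kG_k=\eps_kG_k/\lambdab_k=(\eps_0/\lambdab_0)G_k$. So $\teps_i$ does \emph{not} decay geometrically. The drift $\sum_{i<k}\teps_i$ is therefore not summable: it grows like $\sum_{i<k}G_i=\Theta(k+k^{3/2})$. Your resolution of what you call the main obstacle (the drift is controlled by ``the summable $\teps_i$'') fails, and the iterates can genuinely move away from $\vx_*$ at a polynomial rate.

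The missing idea is that this drift enters \eqref{eq:GHAL-guarantee} only multiplied by $2\lambdab_k=2\lambdab_0\beta^k$. Since $\eps_i/\lambdab_i=\eps_0/\lambdab_0$ is constant, you get $2\lambdab_k\sum_{i<k}\teps_i\le2\eps_0\beta^k\sum_{i<k}G_i\le2\eps_0\beta^k(k-1)G_k$, and also $\lambdab_k\teps_k\le\eps_kG_k$. This yields the paper's recursion
\[
\Rt_k\le\beta\Rt_{k-1}+\beta^k\big(A_1+2A_2(k+1)+\tfrac{5}{2\sqrt2}A_{5/2}(k+1)\sqrt{k}\big).
\]
Unrolling it, which your linear-recursion step does correctly, produces the $k^2$ and $k^{5/2}$ terms. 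Those exponents are forced by the non-summable drift being offset by the geometric decay of $\lambdab_k$; they are not artifacts of crude bounding. Under your claimed summability you would only have obtained degree $3/2$, which is a sign the step was off.

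Once you replace the false inequality with the $1/\lambdab_k$ bound, the rest of your outline goes through as in the paper. That includes pathwise Lipschitzness from clipping, absorbing the $\Rt_{k-1}$ term into $C\eps_k$ via $n_k$, and inverting $\beta^kk^p\le\eps/(8A_p)$.

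A smaller bookkeeping slip concerns the failure probabilities. The MoME call producing $\mTt_{k,0}$ fails with probability $\delta_{k+1}/2$, not $\delta_k/2$. With the correct indexing the total failure probability is $\sum_{k\ge1}\delta_k=\delta$, whereas your $\sum_{k\ge0}\delta_k=\delta/\beta$ exceeds $\delta$.
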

\begin{proof}
    For each $k \geq 0,$ define $\Rt_k := \|\mTt_{k, 0} - \vxh_k\|.$ 
    Let $\teps_{k, j}:= \|\mTt_{k-1, j} - \mT(\vy_j)\|.$ Define events $E_k$ by
    \begin{equation}
    \begin{aligned}
        E_0 &:= \{\teps_{1, 0} \leq (\eps_1/2)\sqrt{\kappa_\cE}\sigma \},\\
        E_k &:= E_{k-1} \cap \big\{\teps_{k+1, 0} \leq (\eps_{k+1}/2)\sqrt{\kappa_\cE}\sigma \text{ and } \teps_{k, j} \leq \eps_k(C + \sqrt{\kappa_\cE}\sigma/2 + a_0 + a_1\sqrt{k}), \forall j \in [J_{k}]\big\}, k \geq 1
    \end{aligned} 
    \end{equation}
Observe that if event $E_k$ is true, then all events $E_i,$ $i \leq k -1,$ must be true as well, by definition. To establish a bound on   $\Rt_k := \|\mTt_{k, 0} - \vxh_{k}\|$, which is needed for bounding the probability of event $E_{k+1}$ (as $\teps_{k+1, j}$'s are bounded dependent on $\Rt_k$), we carry out an inductive argument sequentially conditioning on events $E_k.$ Along the way, we inductively argue $\prob[E_k] \geq 1 - \sum_{i=1}^{k}\delta_i - \delta_{k+1}/2.$ Let us immediately notice here that $\prob[E_k] \geq 1 - \sum_{i=1}^{k}\delta_i - \delta_{k+1}/2$ immediately implies $\prob[E_k] \geq 1- \delta$ no matter how large $k$ is, since
\begin{equation}\label{eq:fail-prob}
    \sum_{i=1}^k \delta_i \leq \sum_{i=1}^\infty\delta_i = (1-\beta)\delta\sum_{i=0}^{\infty}\beta^i \leq  \delta. 
\end{equation}
Thus, the claimed success probability of the algorithm immediately follows from the inductive proof below.

Our proof by induction will establish:
\begin{equation}\label{eq:bnd-on-Rtk}
    \Rt_k \leq \beta^k (A_0 + A_1 k  + A_2 (k+2)^2  + A_{5/2} (k+2)^{5/2}),
\end{equation}
where $A_0, A_1, A_2,$ and $A_{5/2}$ are defined as in the theorem statement.

For the base case, it is immediate (from the definition of $\mTt_{0, 0}$) that event $E_0$ holds with probability $1 - \delta_1/2$, as the guarantee holds from the first call to the MoME in \Cref{algo:GHAL}. Under the event $E_0,$ we have
\begin{equation}
    \Rt_0 := \|\mTt_{0, 0} - \vxh_0\| \leq \|\mT(\vxh_0) - \vxh_0\| + (\eps_1/2)\sqrt{\kappa_\cE}\sigma = A_0. 
\end{equation}

Suppose now that for $k \geq 1$, $E_{k-1}$ holds with probability at least $1 - \sum_{i=1}^{k-1}\delta_i - \delta_{k}/2$, and, under $E_{k-1},$ we have
\begin{equation}\label{eq:ind-hypothesis}
    \Rt_{k-1} \leq \beta^{k-1} (A_0 + A_1 (k-1)  + A_2 (k+1)^2  + A_{5/2} (k+1)^{5/2}).
\end{equation}
Then, by \Cref{prop:variance-of-estimator}, under $E_{k-1},$ simultaneously for all $j \in [J_k]$, with probability $1-\delta_k/2$, we have that %
    \begin{equation}\label{eq:teps-kj}
    \begin{aligned}
       \teps_{k, j} :=  \|\mTt_{k-1, j} - \mT(\vy_j)\| \leq \frac{\eps_k \sqrt{\kappa_\cE}\sigma }{2} + \sum_{i=1}^j \sigma_{k, i} + 3\sqrt{2\kappa_\cE \ln(4/\delta_k)\sum_{i=1}^j \sigma_{k, i}^2} + \frac{2C(1-\lambda_k)\gammab \Rt_{k-1} \ln({4}/{\delta_k})}{3 n_k}. 
    \end{aligned}
    \end{equation}
For $n_k = \lceil \frac{2(1-\lambda_k)\gammab \Rt_{k-1} \ln({4}/{\delta_k})}{3 \epsilon_k} \rceil$, we have that $\frac{2C(1-\lambda_k)\gammab \Rt_{k-1} \ln({4}/{\delta_k})}{3 n_k} \leq C\epsilon_k.$ Further, by \Cref{lemma:minibatching-variance} and using properties of quadratically smoothable spaces and the definition of $\sigma_{k, j}$, we have that 
\begin{equation}\label{eq:sigmakj-bnd}
    \sigma_{k, j}^2 \leq \frac{\kappa_\cE}{m_{k, j}}\Big(\frac{1}{m_{k, j}}\sum_{i=1}^{m_{k, j}}\E\norm{\vtau(\vy_j, \vxi_i^{(1)}) - \vtau(\vy_{j-1}, \vxi_i^{(2)}) - (\mT(\vy_j) - \mT(\vy_{j-1}))}^2\Big).
\end{equation} 
Because the minimum assumption that we make is that the variance is bounded, each summation term in \eqref{eq:sigmakj-bnd} is bounded, which justifies the statement that we can choose sufficiently large (but finite) $m_{k,i}$'s ensuring $\sigma_{k, i}$'s are suitably small so that $\sum_{i=1}^j \sigma_{k, i} + 3\sqrt{2\kappa_\cE\ln(4/\delta_k)\sum_{i=1}^j \sigma_{k, i}^2} \leq \eps_k (a_0 + a_1 \sqrt{k}).$ 

Thus, under $E_{k-1}$ and $\|\mTt_{k-1, 0} - \vxh_{k-1}\| \leq \Rt_{k-1}$, with probability $1-\delta_k/2,$ we have that for all $j \in [J_k],$ $\teps_{k, j} \leq \eps_k(C + \sqrt{\kappa_\cE}\sigma/2 + a_0 + a_1\sqrt{k}).$ Additionally, by the MoME guarantee, we have that $\|\mTt_{k, 0} - \mT(\vxh_k)\| \leq (\eps_{k+1}/2)\sqrt{\kappa_\cE}\sigma ,$ with probability at least $1 - \delta_{k+1}/2$. Thus, by the union bound, event $E_k$ holds with probability at least $1 - \sum_{i=1}^{k-1}\delta_i - \delta_{k}/2 - \delta_k/2 - \delta_{k+1}/2 = 1 - \sum_{i=1}^{k}\delta_i - \delta_{k+1}/2$, as needed for the inductive claim. 

To complete the inductive proof, it remains to argue that under event $E_k$ and $\|\mTt_{k-1, 0} - \vxh_{k-1}\| \leq \Rt_{k-1},$ we additionally have  $\|\mTt_{k, 0} - \vxh_{k}\| \leq \Rt_{k}.$ Observe that under this hypothesis, \Cref{lemma:basic-GHAL-inequalities} applies, and, additionally, $\|\mTt_{k, 0} - \mT(\vxh_k)\| \leq (\eps_{k+1}/2)\sqrt{\kappa_\cE}\sigma $. Thus:
\begin{equation}\label{eq:recursive-fp-res-bnd}
    \|\mTt_{k, 0} - \vxh_k\| \leq \beta \|\mTt_{k-1, 0} - \vxh_{k-1}\| + (\eps_{k+1}/2)\sqrt{\kappa_\cE}\sigma + \teps_{k, J_k} + \lambdab_k\teps_k + 2\lambdab_k\Big(\|\vx_0 - \vx_*\| + \sum_{i=1}^{k-1}\teps_i\Big),
\end{equation}
where, as derived above, for all $j \in [J_k],$ \[\teps_{k, j} \leq \eps_k(C + \sqrt{\kappa_\cE}\sigma/2 + a_0 + a_1\sqrt{k}),\] and by the notation in \Cref{lemma:basic-GHAL-inequalities}, 
\begin{align}
\teps_k &:= (1-\lambda_k)\sum_{j=0}^{J_k - 1}((1-\lambda_k)\gamma)^{J_k - j -1}\teps_{k, j}\notag\\
&\leq \frac{1-\lambda_k}{\lambda_k}\epsilon_k (C + \sqrt{\kappa_\cE}\sigma/2 + a_0 + a_1 \sqrt{k}), \notag
\end{align}
where we have used $\gamma \leq 1$ to crudely bound $\sum_{j=0}^{J_k - 1}((1-\lambda_k)\gamma)^{J_k - j -1} \leq \frac{1}{\lambda_k}$ and the above bound on $\teps_{k, j}.$ Using that $\eps_k = \eps_0 \beta^k$ and $\lambdab_k = \lambdab_0 \beta^k,$ and recalling the notation $\Rt_k = \|\mTt_{k, 0} - \vxh_k\|,$ we can now simplify \eqref{eq:recursive-fp-res-bnd} as follows:
\begin{align}
    \Rt_k &\leq \beta \Rt_{k-1} + 2\lambdab_0 \beta^k \norm{\vx_0 - \vx_*} + \beta^k \eps_0\Big(\beta\sqrt{\kappa_\cE}\sigma/2 + 2\big(C + \sqrt{\kappa_\cE}\sigma/2 + a_0 + a_1 \sqrt{k}\big)(k+1))\Big)\notag\\
    &= \beta \Rt_{k-1} + \beta^k (A_1 + 2A_2(k+1) + \frac{5}{2\sqrt{2}}A_{5/2}(k+1)\sqrt{k}).\label{eq:Rtk-recursion-1}
\end{align}
Plugging in the inductive hypothesis \eqref{eq:ind-hypothesis}, we now have
\begin{align}
    \Rt_k &\leq \beta^k\big(A_0 + A_1 k + A_2((k+1)^2 + 2(k+1)) + A_{5/2}\Big(\frac{5}{2\sqrt{2}}(k+1)\sqrt{k} + (k+1)^{5/2}\Big)\big)\\
    &\leq \beta^k \big(A_0 + A_1 k + A_2(k+2)^2 + A_{5/2}(k+2)^{5/2}\big),
\end{align}
where in the last line we used $(k+2)^2 - (k+1)^2 = 2k + 3 \geq 2(k+1)$ and $(k+2)^{5/2} - (k+1)^{5/2} = \int_{k+1}^{k+2}\frac{5}{2}t^{3/2}\dd t \geq \frac{5}{2}(k+1)^{3/2} \geq \frac{5}{2\sqrt{2}}(k+1)\sqrt{k}$, completing the proof by induction.

To complete the proof of the theorem, it remains to bound the number of iterations $k$ until $\Rt_k \leq \eps/2.$ To do so, we enforce $\beta^k A_p k^p \leq \eps/8$ for each $p \in\{0, 1, 2, 5/2\}$. Let $L_p := \max\{1, \log_{1/\beta}\frac{8A_p}{\eps}\}.$ Then $\beta^k A_p k^p \leq \eps/8$ for $k \geq L_p + p \log_{1/\beta}L_p + \cO_{\beta}(1)$, where $\cO_\beta(1)$ is a small universal constant as long as $\beta$ is not trivially close to one. This leads to the claimed bound on $k.$ 

Finally, on the event $E_k$,
\begin{equation}\notag
    \|\mT(\vxh_k) - \vxh_k\| \leq \Rt_k + \|\mTt_{k, 0} - \mT(\vxh_k)\| <  \Rt_k + \frac{\epsilon_{k+1}}{2}\sqrt{\kappa_\cE}\sigma. 
\end{equation}
By the choice of $k$ in \eqref{eq:generic-k}, the induction bound gives
$\Rt_k \le \epsilon/2$, while the same choice also implies
$(\epsilon_{k+1}/2)\sqrt{\kappa_\cE}\sigma\le \epsilon/2$.
Thus $\|\mT(\vxh_k) - \vxh_k\|\le \epsilon$.
\end{proof}
Although the bound on $k$ may appear somewhat complicated, the important part is that the leading term is logarithmic (with constant one) in $1/\eps.$ All remaining terms are lower-order (constant or $\log\log$). This is important for bounding the polynomial growth in $1/\eps$ in the sample complexity bounds derived in the upcoming subsections. 
Additionally, observe that since $\Rt_k = \cO(\beta^k \mathrm{poly}(k))$ and $\eps_k = \eps_0 \beta^k,$ we have that $n_k$ is polynomial in $k,$ thus it can only contribute poly-logarithmic factors in the resulting sample complexity bounds. 

For completeness, we also bound the total number of samples utilized in the calls to MoME.

\begin{lemma}\label{lemma:MoME-calls-samples}
    The total number of samples that \Cref{algo:GHAL} uses in calls to MoME (over all epochs) is at most
    \[\widetilde{\cO}\Big(\frac{\ln(1/\delta)\max_{p \in \{0, 1, 2, 5/2\}}A_p^2}{\eps^2}\Big).\]
\end{lemma}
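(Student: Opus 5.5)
The plan is to sum the per-call MoME sample counts from \Cref{prop:MoME-final} over all epochs $k = 0, 1, \dots, K$. The final bound then follows from a geometric sum dominated by its last term, together with the bound on $K$ from \Cref{thm:master}.

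First, the per-call cost. In epoch $k$, MoME is called with target accuracy $\epsilon_{k+1}/2$ and failure probability $\delta_{k+1}/2$; the initial call uses $\beta\epsilon_0/2 = \epsilon_1/2$ and $\beta\delta_0/2 = \delta_1/2$. By \Cref{prop:MoME-final}, this call uses
\[
N_k = \lceil 108/\epsilon_{k+1}^2\rceil\cdot\lceil 18\ln(2/\delta_{k+1})\rceil
\]
samples. Substituting $\epsilon_{k+1} = \epsilon_0\beta^{k+1}$ and $\delta_{k+1} = (1-\beta)\delta\beta^{k}$ gives
\[
N_k = \cO\Big(\beta^{-2(k+1)}\big(\ln(1/\delta) + \ln\tfrac{1}{1-\beta} + k\ln(1/\beta)\big)\Big).
\]

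Second, the sum over epochs. The logarithmic factor is at most $\ln(1/\delta) + \cO_\beta(K)$ for every $k \le K$. The remaining geometric sum satisfies $\sum_{k=0}^{K}\beta^{-2(k+1)} \le \frac{\beta^{-2(K+1)}}{1-\beta^2}$. Hence the total is $\cO_\beta\big(\beta^{-2K}(\ln(1/\delta) + K)\big)$.

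Third, converting to $\eps$. Take $K$ as small as \eqref{eq:generic-k} permits, namely $K = \max_p\{L_p + p\log_{1/\beta}L_p\} + \cO_\beta(1)$. Then
\[
\beta^{-K} = \cO_\beta\Big(\max_p \frac{8A_p}{\eps}\, L_p^{p}\Big),
\qquad\text{so}\qquad
\beta^{-2K} = \cO\Big(\frac{\max_p A_p^2}{\eps^2}\,\mathrm{polylog}\Big(\frac{\max_p A_p}{\eps}\Big)\Big),
\]
using $p \le 5/2$. The extra factor $K$ is also logarithmic. Absorbing the polylogarithmic factors into $\widetilde\cO$ yields the claimed bound $\widetilde\cO\big(\ln(1/\delta)\max_p A_p^2/\eps^2\big)$.

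The main subtlety is in this last step. The total is governed by $\beta^{-2K}$, so $K$ must not exceed the minimal value in \eqref{eq:generic-k} by more than an additive $\cO_\beta(1)$. Also, the $p\log_{1/\beta}L_p$ terms must contribute only polylog factors, not a polynomial overhead. I would handle this by writing $\beta^{-p\log_{1/\beta}L_p} = L_p^p$ explicitly, which gives at most $(\log(1/\eps))^{5}$ after squaring. I would also note that the $\ln(2/\delta_{k+1})$ factor only adds logarithms.
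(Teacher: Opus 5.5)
Your proposal is correct and follows the same route as the paper. Both bound the cost of each MoME call via \Cref{prop:MoME-final}, observe that the geometric sum over epochs is dominated by its last term $\beta^{-2K}$, and use the epoch count from \Cref{thm:master} to get $\beta^{-2K} = \widetilde\cO(\max_p A_p^2/\eps^2)$. Your version is simply more explicit about the indexing ($\eps_{k+1},\delta_{k+1}$), the geometric-sum constant, and why the $p\log_{1/\beta}L_p$ terms contribute only the polylogarithmic factor $L_p^{p}$.
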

\begin{proof}
    By \Cref{prop:MoME-final}, a call to MoME in epoch $k$ costs $\cO(\frac{\ln(1/\delta_k)}{\eps_k^2}) = \cO(\frac{\ln(1/\delta) + (k-1) \ln(1/\beta)}{(\beta^k \eps_0)^2})$. The total number of epochs $k$ until \Cref{algo:GHAL} halts, by \Cref{thm:master} is bounded by $\lceil\max_{p \in\{0, 1, 2, 5/2\}}\log_{1/\beta}(\frac{A_p}{\eps})\rceil$ plus lower-order (log-log and constant) terms, thus $\frac{1}{\beta^{2k}} = \widetilde{\cO}\big(\frac{\max_{p \in\{0, 1, 2, 5/2\}}A_p^2}{\eps^2}\big),$ leading to the claimed sample complexity bound.
\end{proof}
\subsection{General, Bounded-Variance Setting}\label{sec:gen-bnd-var}

Let us consider first the most general setting, under which we only make the basic assumption stated in \eqref{eq:stochastic-opt-assmpts}. To bound the sample complexity of \Cref{algo:GHAL}, we primarily need to bound the sample sizes $m_{k, j}.$ The reason is that, as already remarked at the end of the previous subsection, $n_k$ is logarithmic in $1/\delta$ and polynomial in $k,$ thus it contributes polylogarithmic factors that get absorbed in the tilde-Oh notation, while the number of samples consumed by MoME calls was already bounded in \Cref{lemma:MoME-calls-samples}. %
The per-iteration number of samples $m_{k, j}$ needed to satisfy the condition in \Cref{thm:master} is bounded in the following lemma. To avoid overcomplicating expressions, since there is no apparent benefit to choosing $\beta$ as anything other than a universal constant within $(0, 1)$ (e.g., $\beta = 1/2$), in the rest of this section we state all the complexity results treating $1/\beta$ and $1/(1-\beta)$ as constants, so they are absorbed in the big-Oh notation.  

\begin{lemma}\label{lemma:mkj-general}
    Suppose that the stochastic oracle $\vtau(\vx; \vxi)$ satisfies \eqref{eq:stochastic-opt-assmpts} and consider any epoch $k \geq 1$ and iteration $j \geq 1$ in \Cref{algo:GHAL}. Let $\rho_k = \gammab(1-\lambda_k).$ Then taking
    \begin{equation}\label{eq:gen-mkj}
        m_{k, i} := \Big\lceil \frac{4 J_k^2}{\eps_k^2}\Big\rceil = \cO\Big(\frac{1}{\eps_k^2 \max\{(1-\gammab)^2, \lambda_k^2\}}\Big)%
    \end{equation}
    for all $k \geq 1, i \in [J_k],$ is sufficient to satisfy \Cref{asspt:variance-minibatch} used in \Cref{thm:master}, with \[a_0 = \sqrt{\kappa_\cE}\sigma(1  + 3\sqrt{2\kappa_\cE}\sqrt{\ln(4/((1-\beta)\delta))}),\, a_1 = 3\sqrt{2}\kappa_\cE\sigma\sqrt{\ln(1/\beta)}.\] 
\end{lemma}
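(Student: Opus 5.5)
The plan is to bound each $\sigma_{k,i}$ via \eqref{eq:sigmakj-bnd}, i.e., \Cref{lemma:minibatching-variance} applied conditionally on $\cF_{k,i}^{(0)}$. We then sum over $i\in[J_k]$ and check \eqref{eq:per-iteration-variance}.

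\textbf{Step 1 (per-sample variance).} For a single summand of $\Delta_{k,i}^{(\ell)}$ in the i.i.d.\ case, we write
\[
\vtau(\vy_i;\vxi^{(1)})-\vtau(\vy_{i-1};\vxi^{(2)})-(\mT(\vy_i)-\mT(\vy_{i-1}))
\]
as a difference of two centered terms. By the triangle inequality and $(a+b)^2\le 2a^2+2b^2$, together with \eqref{eq:stochastic-opt-assmpts}, its conditional second moment is at most $4\sigma^2$. The same bound holds in the shared-seed case. Plugging this into \eqref{eq:sigmakj-bnd} gives
\[
\sigma_{k,i}^2\le \frac{4\kappa_\cE\sigma^2}{m_{k,i}},
\]
which is a deterministic, hence $\cF_{k,0}^{(0)}$-measurable, bound, as \Cref{prop:variance-of-estimator} requires. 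With $m_{k,i}\ge 4J_k^2/\eps_k^2$ this yields $\sigma_{k,i}\le \sqrt{\kappa_\cE}\sigma\,\eps_k/J_k$.

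\textbf{Step 2 (summation).} Summing gives $\sum_{i=1}^{J_k}\sigma_{k,i}\le \sqrt{\kappa_\cE}\sigma\eps_k$ and $\sum_{i=1}^{J_k}\sigma_{k,i}^2\le \kappa_\cE\sigma^2\eps_k^2/J_k\le \kappa_\cE\sigma^2\eps_k^2$. The left side of \eqref{eq:per-iteration-variance} is therefore at most
\[
\eps_k\sqrt{\kappa_\cE}\sigma\bigl(1+3\sqrt{2\kappa_\cE\ln(4/\delta_k)}\bigr).
\]
Since $\delta_k=\beta^k(1-\beta)\delta/\beta$, we have
\[
\ln(4/\delta_k)=\ln\bigl(4/((1-\beta)\delta)\bigr)+(k-1)\ln(1/\beta)\le \ln\bigl(4/((1-\beta)\delta)\bigr)+k\ln(1/\beta).
\]
Using $\sqrt{u+v}\le\sqrt u+\sqrt v$ splits this into a constant part and a $\sqrt{k}$ part. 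These give exactly the stated $a_0$ and $a_1=3\sqrt2\kappa_\cE\sigma\sqrt{\ln(1/\beta)}$. The constant bookkeeping (the factor 4 in the variance, and whether the $\sigma$ in \eqref{eq:stochastic-opt-assmpts} is doubled) needs care, because the factor of $2$ from Step 1 has to be absorbed into the $\lceil 4J_k^2/\eps_k^2\rceil$ choice.

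\textbf{Step 3 (order bound).} For the $\cO(\cdot)$ expression we bound $J_k=\lceil \ln\beta/\ln\rho_k\rceil$. We use $\ln(1/\rho_k)\ge 1-\rho_k$ and $1-\rho_k=1-\gammab(1-\lambda_k)\ge\max\{1-\gammab,\lambda_k\}$. This gives $J_k\le 1+\ln(1/\beta)/\max\{1-\gammab,\lambda_k\}=\cO(1/\max\{1-\gammab,\lambda_k\})$, and squaring yields the claim.

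I expect the main obstacle to be matching the exact constants in $a_0,a_1$ in Step 2, namely the factor from Step 1's variance of a difference. There is no conceptual difficulty there.
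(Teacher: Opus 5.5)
Your proposal is correct and follows the paper's proof essentially step for step: Young's inequality gives the per-sample bound $4\sigma^2$, \Cref{lemma:minibatching-variance} then gives $\sigma_{k,i}^2\le 4\kappa_\cE\sigma^2/m_{k,i}$, and splitting $\ln(4/\delta_k)$ with $\sqrt{u+v}\le\sqrt u+\sqrt v$ yields exactly the stated $a_0,a_1$. Your closing worry about constants is unfounded, since the $4$ from Young cancels exactly against the $4$ in $m_{k,i}$; your Step 3 additionally verifies the $\cO(\cdot)$ form via $1-\rho_k\ge\max\{1-\gammab,\lambda_k\}$, which the paper asserts without proof.
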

\begin{proof}
    Fix $k \geq 1, i \in [J_k].$ %
    Recall that by \Cref{lemma:minibatching-variance} and the definition of $\sigma_{k, i},$ we have
\begin{equation}\notag%
    \sigma_{k, i}^2 \leq \frac{\kappa_\cE}{m_{k, i}}\Big(\frac{1}{m_{k, i}}\sum_{\ell=1}^{m_{k, i}}\E\norm{\vtau(\vy_i, \vxi_\ell^{(1)}) - \vtau(\vy_{i-1}, \vxi_\ell^{(2)}) - (\mT(\vy_i) - \mT(\vy_{i-1}))}^2\Big).
\end{equation} 
By Young's inequality and the bounded variance assumption in \eqref{eq:stochastic-opt-assmpts}, we have that \[\E\norm{\vtau(\vy_i, \vxi_\ell^{(1)}) - \vtau(\vy_{i-1}, \vxi_\ell^{(2)}) - (\mT(\vy_i) - \mT(\vy_{i-1}))}^2 \leq 4 \sigma^2;\] thus:
\begin{equation}\label{eq:bnd-on-sigma2}
    \sigma_{k, i}^2 \leq \frac{4\kappa_\cE\sigma^2 }{m_{k, i}}. 
\end{equation}
Consider $m_{k, i} = \lceil \frac{4 J_k^2}{\eps_k^2}\rceil.$ 
From \eqref{eq:bnd-on-sigma2}, under the stated choice of $m_{k, i}$, we have $\sigma_{k, i}^2 \leq \kappa_\cE\sigma^2 \eps_k^2/J_k^2.$ Thus:
\begin{align}\label{eq:sum-variance-bnd-gen}
    \sum_{i=1}^{J_k} \sigma_{k, i} &\leq %
    \eps_k \sqrt{\kappa_\cE} \sigma.
\end{align}
Notice similarly that
\(
    \sum_{i=1}^j \sigma_{k, i}^2 \leq %
    \eps_k^2 \kappa_\cE\sigma^2.  
\) 
Thus, recalling that $\delta_k = (1-\beta)\delta \beta^{k-1},$ we have
\begin{align}
    3\sqrt{2\kappa_\cE\ln(4/\delta_k)\sum_{i=1}^j \sigma_{k, i}^2} &\leq 3\sqrt{2}\kappa_\cE\sigma \eps_k\sqrt{\ln(4/((1-\beta)\delta)) + (k-1)\ln(1/\beta)}\notag\\
    &\leq 3\sqrt{2}\kappa_\cE\sigma \eps_k\big(\sqrt{\ln(4/((1-\beta)\delta))} + \sqrt{(k-1)\ln(1/\beta)}\big).\label{eq:2nd-var-term-gen}
\end{align}
To complete the proof, it remains to combine \eqref{eq:sum-variance-bnd-gen} and \eqref{eq:2nd-var-term-gen}. 
\end{proof}

Using \Cref{thm:master} and \Cref{lemma:mkj-general}, we can now bound the total sample complexity of \Cref{algo:GHAL} as follows.

\begin{corollary}\label{cor:sample-complexity-gen}
    Let $\eps > 0$ and $\delta \in (0, 1)$ be given parameters and suppose that $\vtau(\vx; \vxi)$ satisfies \eqref{eq:stochastic-opt-assmpts}. Define $D:= \norm{\vx_* - \vx_0} + \sigma.$ Then, the total number of samples utilized by \Cref{algo:GHAL} invoked with $\gammab = \gamma$, $\eps_0 = \lambdab_0 = 1$, and $\beta = 1/2$ is
    \begin{equation}\label{eq:gen-sample-complexity-total}
        \widetilde\cO_{\kappa_\cE, \ln(1/\delta)}\Big(\frac{D^2}{\eps^2} + \min\Big\{\frac{D^6}{\eps^5}, \, \frac{D^3}{(1-\gamma)^3 \eps^2}\Big\}\Big).
    \end{equation}
\end{corollary}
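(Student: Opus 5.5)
The plan is to add up the oracle calls in four places: the MoME calls, the inner minibatches $m_{k,i}$, the number of clipped copies $n_k$, and the number of epochs $K$. The parameters come from \Cref{lemma:mkj-general} and \Cref{thm:master}, with $\beta = 1/2$ and $\eps_0=\lambdab_0=1$.

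First, I bound the constants $A_p$ from \eqref{eq:C-params}. We have $A_0 = \|\mT(\vx_0)-\vx_0\| + \cO(\sqrt{\kappa_\cE}\sigma)$. Since $\|\mT(\vx_0)-\vx_0\| \le (1+\gamma)\|\vx_0-\vx_*\| \le 2\|\vx_0-\vx_*\|$, this gives $A_0 = \cO(D)$. Also $A_1 = \cO(D)$. With $a_0, a_1$ from \Cref{lemma:mkj-general}, both $A_2$ and $A_{5/2}$ are $\cO_{\kappa_\cE,\ln(1/\delta)}(1+\sigma)$, which is $\cO(D)$ up to an additive constant; I absorb this into the $\widetilde\cO$, treating $D \gtrsim 1$ or folding $C$ into the $\kappa_\cE$-dependence. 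Then \Cref{thm:master} gives $K = \log_2(\max_p A_p/\eps) + \cO(\log\log)$, so $2^K = \widetilde\cO(D/\eps)$. The MoME cost is then $\widetilde\cO(D^2/\eps^2)$ by \Cref{lemma:MoME-calls-samples}.

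Next, I bound the dominant inner-loop cost $\sum_{k=1}^K n_k J_k m_k$, where $m_k = \lceil 4J_k^2/\eps_k^2\rceil$. On the success event, \eqref{eq:bnd-on-Rtk} gives $\Rt_{k-1} = \cO(\beta^{k}\mathrm{poly}(k) D)$ and $\eps_k = \beta^k$, so $n_k = \cO(\mathrm{poly}(k)\ln(1/\delta_k)\, D)$. This contributes a factor of $D$ times polylog terms. Since $\lambda_k \asymp \beta^k$, we get $J_k = \cO(1/\max\{1-\gamma,\lambda_k\})$, and the epoch-$k$ cost is $\widetilde\cO\big(D\,\beta^{-2k}\min\{\beta^{-3k},(1-\gamma)^{-3}\}\big)$. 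Summing the geometric series up to $K$ and using $\beta^{-K} = \widetilde\cO(D/\eps)$ gives the first branch, $D\cdot (D/\eps)^5 = D^6/\eps^5$. The second branch is $D\,(D/\eps)^2(1-\gamma)^{-3} = D^3/((1-\gamma)^3\eps^2)$. The minimum of two geometric sums is bounded by the minimum of the two totals, because each summand is at most either bound.

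The main obstacle is that $n_k$ depends on the random $\Rt_{k-1}$. I would handle this by stating the complexity on the high-probability event of \Cref{thm:master}, where \eqref{eq:bnd-on-Rtk} holds deterministically. A second obstacle is making sure the $\sqrt{k}$ and polynomial-in-$k$ factors, together with the $\ln(1/\delta_k)$ growth, stay polylogarithmic. This holds because $k \le K = \cO(\log(D/\eps))$. Adding the MoME term and the inner-loop term then gives \eqref{eq:gen-sample-complexity-total}.
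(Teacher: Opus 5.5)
Your proposal is correct and follows essentially the same route as the paper. It bounds $\max_p A_p=\cO(D)$ via $\|\mT(\vx_0)-\vx_0\|\le 2\|\vx_0-\vx_*\|$ and charges the MoME calls through \Cref{lemma:MoME-calls-samples}. It then bounds $n_k=\cO(\mathrm{poly}(k)\,D)$ using the inductive bound on $\Rt_{k-1}$ and sums $n_k\sum_j m_{k,j}=n_k\,\cO\big(\eps_k^{-2}\min\{\lambda_k^{-3},(1-\gamma)^{-3}\}\big)$ over $K\approx\log_2(D/\eps)$ epochs. You are slightly more careful than the paper on two points it leaves implicit: the count holds on the high-probability event of \Cref{thm:master} (because $n_k$ depends on the random $\Rt_{k-1}$), and the additive constant $C$ in $A_2$ requires treating $D\gtrsim 1$.
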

\begin{proof}
    Since $\|\mT(\vx_0) - \vx_0\| \leq 2 \|\vx_0 - \vx_*\|,$ under the stated parameter setting, we have that $\max_{p \in \{0, 1, 2, 5/2\}}A_p = \cO_{\kappa_\cE, \ln(1/\delta)}(\|\vx_0 - \vx_*\| + \sigma).$ Observe first that, using \Cref{lemma:MoME-calls-samples} and \Cref{thm:master}, the total number of samples utilized by calls to MoME is $\widetilde{\cO}_{\kappa_\cE, \ln(1/\delta)}\Big(\frac{D^2}{\eps^2}\Big)$, thus it is absorbed by the bound stated in \eqref{eq:gen-sample-complexity-total}.  
    
    Observe further that $J_k = \cO(\max\big\{\frac{1}{\lambda_k},\, \frac{1}{1-\gamma}\big\}\big).$ Thus, using \Cref{lemma:mkj-general}, 
    \begin{equation}\label{eq:sum-mkj-gen}
        \sum_{j=1}^{J_k}m_{k, j} = \cO\Big(\frac{1}{\eps_k^2}\min\Big\{\frac{1}{\lambda_k^3},\, \frac{1}{(1-\gamma)^3}\Big\}\Big) = \cO\Big\{\min\Big\{\frac{1}{\beta^{5k}},\, \frac{1}{\beta^{2k}(1-\gamma)^3}\Big\}\Big\}.
    \end{equation}
    Further, using the results of \Cref{thm:master}, we have $n_k = \cO(\mathrm{poly}(k)\max_{p \in \{0, 1, 2, 5/2\}}A_p) = \widetilde\cO_{\kappa_\cE, \ln(1/\delta)}((\|\vx_* - \vx_0\| + \sigma) \ln(1/\epsilon))$. Multiplying \eqref{eq:sum-mkj-gen} by $n_k,$ we get 
    \begin{equation}\label{eq:per-epoch-total-gen}
        n_k \sum_{j=1}^{J_k}m_{k, j} = \min\Big\{\frac{1}{\beta^{5k}},\, \frac{1}{\beta^{2k}(1-\gamma)^3}\Big\} \widetilde\cO_{\kappa_\cE, \ln(1/\delta)}((\|\vx_* - \vx_0\| + \sigma) \ln(1/\epsilon)),
    \end{equation}
    so now the only terms dependent on $k$ are of the form $1/\beta^{\alpha k}$ for $\alpha \in \{2, 5\}.$ Since the total number of epochs $K$ until the algorithm terminates is equal to $\lceil\log_{1/\beta}(8 \max_{p \in \{0, 1, 2, 5/2\}}A_p/\epsilon)\rceil$ plus lower-order (log-log and constant) terms, we have that $\sum_{k=1}^K \frac{1}{\beta^{\alpha k}} = \widetilde\cO\big(\big(\frac{\max_{p \in \{0, 1, 2, 5/2\}}A_p}{\epsilon}\big)^{\alpha}\big).$  The claimed bound on the sample complexity now follows by summing \eqref{eq:per-epoch-total-gen} over $k$.  
\end{proof}

It is worth pointing out here that we do not expect the true complexity to have a higher polynomial scaling with $\|\vx_* - \vx_0\| + \sigma$ than it does with $1/\eps.$ The technical reason for this comes from the choice of $n_k \propto \Rt_k/\eps_k.$ It seems plausible that taking $n_k \propto \Rt_k$ might lead to the same polynomial scaling in both $\|\vx_* - \vx_0\| + \sigma$ and $1/\eps;$ however, such a choice would result in a much more complicated recursion on $\Rt_k$ than \eqref{eq:Rtk-recursion-1}, involving all prior $\Rt_i,$ $i \leq k-1$ instead of just $\Rt_{k-1}.$ While we see this as a possible path towards tightening the stated complexity, we do not pursue it in this work.   

\subsection{Convergence of VR-GHAL under Expected Lipschitzness}\label{sec:exp-Lip}

Consider now the setting where, in addition to \eqref{eq:stochastic-opt-assmpts}, the stochastic oracle $\vtau$ also satisfies the multi-point oracle and expected Lipschitzness assumptions stated in \Cref{asspt:multi-query-oracle} and \Cref{asspt:Lipschitz-in-expectation}. We argue that in this case, the sample complexity can be improved, as stated in \Cref{cor:exp-Lip}.

As before, to apply \Cref{thm:master}, we first need to bound the sample sizes $m_{k, j}$, which is done in the following lemma.

\begin{lemma}\label{lemma:samples-exp-Lip}
    Suppose that the stochastic oracle $\vtau(\vx; \vxi)$ satisfies \eqref{eq:stochastic-opt-assmpts}, \Cref{asspt:multi-query-oracle}, and \Cref{asspt:Lipschitz-in-expectation}. Consider epochs $k \geq 1$ and iterations $j \geq 1$ in \Cref{algo:GHAL}. Let $\rho_k = \gammab(1-\lambda_k).$ Then taking
    \begin{equation}\label{eq:Lip-exp--mkj}
        m_{k, j} := \Big\lceil \frac{4\Rt_{k-1}^2}{\eps_k^2 (1-\rho_k)^2}\Big\rceil = \cO\Big(\frac{\max_{p \in\{0, 1, 2, 5/2\}}(A_p/\eps_0)^2 \mathrm{poly}(k)}{\max\{(1-\gammab)^2, \lambda_k^2\}}\Big)
    \end{equation}
    for all $k \geq 1, j \in [J_k],$ is sufficient to satisfy \Cref{asspt:variance-minibatch} with \[a_0 = \sqrt{L^2 + \gamma^2}\sqrt{\kappa_\cE}(1 + 3\sqrt{2\kappa_\cE}\sqrt{\ln(4/((1-\beta)\delta))}),\, a_1 = 3\sqrt{\kappa_\cE}\sqrt{2(L^2 + \gamma^2)\ln(1/\beta)}.\] 
\end{lemma}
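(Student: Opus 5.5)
Following the template of \Cref{lemma:mkj-general}, the plan is to bound each $\sigma_{k,i}$ via \Cref{lemma:minibatching-variance}, now using that under \Cref{asspt:multi-query-oracle} we take $\vxi_\ell^{(1)} = \vxi_\ell^{(2)} = \vxi_\ell$. For fixed $k\geq 1$, $i\in[J_k]$, conditionally on $\cF_{k,i}^{(0)}$ the summands $\vtau(\vy_i;\vxi_\ell) - \vtau(\vy_{i-1};\vxi_\ell) - (\mT(\vy_i)-\mT(\vy_{i-1}))$ are i.i.d.\ and centered. Their second moment is at most $2\E\|\vtau(\vy_i;\vxi)-\vtau(\vy_{i-1};\vxi)\|^2 + 2\|\mT(\vy_i)-\mT(\vy_{i-1})\|^2 \leq 2(L^2+\gamma^2)\|\vy_i-\vy_{i-1}\|^2$, by Young's inequality, \Cref{asspt:Lipschitz-in-expectation}, and $\gamma$-Lipschitzness of $\mT$. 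A sharper route is to note that the variance is at most the raw second moment, giving $L^2\|\cdot\|^2$, but the $L^2+\gamma^2$ form matches the stated constants. Hence $\sigma_{k,i}^2 \leq \frac{2\kappa_\cE(L^2+\gamma^2)}{m_{k,i}}\|\vy_i-\vy_{i-1}\|^2$, up to the exact constant, which only affects $a_0,a_1$ by absolute factors.

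Next I will use the pathwise geometric decay of successive differences. As shown in the proof of \Cref{lemma:basic-GHAL-inequalities}, which relies only on the clipping property $\|\mTt_{k-1,i}-\mTt_{k-1,i-1}\|\leq\gammab\|\vy_i-\vy_{i-1}\|$ and therefore holds surely, we have $\|\vy_i-\vy_{i-1}\| \leq (1-\lambda_k)^i\gammab^{i-1}\Rt_{k-1} \leq \rho_k^{i-1}\Rt_{k-1}$. There is one subtlety here: $\sigma_{k,i}^2$ must be $\cF_{k,0}^{(0)}$-measurable for \Cref{prop:variance-of-estimator}. I will therefore \emph{define} $\sigma_{k,i}^2 := \frac{2\kappa_\cE(L^2+\gamma^2)}{m_{k,i}}\rho_k^{2(i-1)}\Rt_{k-1}^2$. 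This is a valid almost-sure upper bound on the conditional variance, and it is $\cF_{k,0}^{(0)}$-measurable because $\Rt_{k-1}$ is determined at the epoch start, and so is $m_{k,i}$ since it depends on $\Rt_{k-1}$.

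With $m_{k,i} = \lceil 4\Rt_{k-1}^2/(\eps_k^2(1-\rho_k)^2)\rceil$, this gives $\sigma_{k,i} \leq \sqrt{\kappa_\cE(L^2+\gamma^2)}\,\eps_k(1-\rho_k)\rho_k^{i-1}$, up to constants. Summing the geometric series yields $\sum_{i=1}^{J_k}\sigma_{k,i} \leq \sqrt{\kappa_\cE(L^2+\gamma^2)}\,\eps_k$. For the squares, $\sum_i\sigma_{k,i}^2 \leq \kappa_\cE(L^2+\gamma^2)\eps_k^2(1-\rho_k)^2/(1-\rho_k^2) \leq \kappa_\cE(L^2+\gamma^2)\eps_k^2$. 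Then, exactly as in \eqref{eq:2nd-var-term-gen}, I split $\sqrt{\ln(4/\delta_k)} \leq \sqrt{\ln(4/((1-\beta)\delta))} + \sqrt{(k-1)\ln(1/\beta)}$. This produces the stated $a_0$ and $a_1$, so \eqref{eq:per-iteration-variance} holds.

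Finally, for the big-Oh form of $m_{k,j}$, I will use the induction bound \eqref{eq:bnd-on-Rtk} from the proof of \Cref{thm:master}, namely $\Rt_{k-1} \leq \beta^{k-1}\mathrm{poly}(k)\max_p A_p$. Together with $\eps_k = \eps_0\beta^k$, this gives $\Rt_{k-1}/\eps_k = \cO(\mathrm{poly}(k)\max_p A_p/\eps_0)$. It remains to note that $1-\rho_k \geq \max\{1-\gammab,\lambda_k\}$. The main obstacle is the measurability and circularity issue: $m_{k,j}$ depends on the random $\Rt_{k-1}$, and the bound on $\Rt_{k-1}$ holds only on the event $E_{k-1}$. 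I will handle this by stating sufficiency of \Cref{asspt:variance-minibatch} pathwise, since it needs no probability, and invoking \eqref{eq:bnd-on-Rtk} only for the order-of-magnitude expression, on $E_{k-1}$.
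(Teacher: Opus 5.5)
Your proposal is correct and follows the paper's proof essentially step for step: \Cref{lemma:minibatching-variance} with shared seeds, Young's inequality with expected Lipschitzness, the sure decay $\norm{\vy_i-\vy_{i-1}}\le\rho_k^{i-1}\Rt_{k-1}$, geometric sums, and the split of $\sqrt{\ln(4/\delta_k)}$; you also make explicit the $\cF_{k,0}^{(0)}$-measurability of $\sigma_{k,i}^2$, which the paper leaves implicit. Two small caveats: your unused aside that the variance is at most the raw second moment is a Hilbert-space fact that fails in general normed spaces (in $(\sR^2,\norm{\cdot}_\infty)$, $X$ uniform on $\{(1,1),(1,-1),(-1,0)\}$ gives $\E\norm{X-\E X}^2=34/27>1=\E\norm{X}^2$), and the calculation actually yields $a_1$ with $\kappa_\cE$ in place of $\sqrt{\kappa_\cE}$ (up to an absolute constant), exactly as the paper's own derivation does, so the stated $a_1$ looks like a typo rather than a flaw in your argument.
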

\begin{proof}
    Recall that under Assumptions \ref{asspt:multi-query-oracle} and \ref{asspt:Lipschitz-in-expectation}, $\vxi_\ell^{(1)} = \vxi_\ell^{(2)}$ in the definition of the difference operator (see \eqref{eq:delta-kj-def}). As a consequence, we have that:
    \begin{align}
        \sigma_{k, i}^2 &\leq \frac{\kappa_\cE}{m_{k, i}}\Big(\frac{1}{m_{k, i}}\sum_{\ell=1}^{m_{k, i}}\E\norm{\vtau(\vy_i, \vxi_\ell^{(1)}) - \vtau(\vy_{i-1}, \vxi_\ell^{(1)}) - (\mT(\vy_i) - \mT(\vy_{i-1}))}^2\Big)\notag\\
        &\leq \frac{2\kappa_\cE}{m_{k, i}}\Big(\frac{1}{m_{k, i}}\sum_{\ell=1}^{m_{k, i}}\big(\E\big[\|\vtau(\vy_i, \vxi_\ell^{(1)}) - \vtau(\vy_{i-1}, \vxi_\ell^{(1)})\|^2\big] + \|\mT(\vy_i) - \mT(\vy_{i-1}))\|^2\big)\Big)\notag\\
        &\leq \frac{4\kappa_\cE(L^2 + \gamma^2) \norm{\vy_i - \vy_{i-1}}^2}{m_{k, i}}.\label{eq:var-Lip-exp-1}
    \end{align}
    where the second inequality is by Young's inequality, and the third inequality follows from $\gamma$-Lipschitzness of $\mT$ and expected Lipschitzness of $\vtau$ (\Cref{asspt:Lipschitz-in-expectation}). 

    Recall (from proof of \Cref{lemma:basic-GHAL-inequalities}) that $\norm{\vy_i - \vy_{i-1}}^2 \leq \rho_k^{2(i-1)}(1-\lambda_k)^2\Rt_{k-1}^2.$ Let $m_{k,i}$ be defined as in the lemma statement. Then, \eqref{eq:var-Lip-exp-1} implies
    \begin{equation}\notag
        \sigma_{k, i}^2 \leq (L^2 + \gamma^2)\kappa_\cE\eps_k^2\rho_k^{2(i-1)}(1-\rho_k)^2.
    \end{equation}
    Thus, $\sum_{i=1}^{J_k}\sigma_{k, i} \leq \eps_k\sqrt{\kappa_\cE(L^2 + \gamma^2)}$ and $\sum_{i=1}^{J_k}\sigma_{k, i}^2 \leq \eps_k^2\kappa_\cE (L^2 + \gamma^2).$ The rest of the calculation is similar to the proof of \Cref{lemma:mkj-general}, with $\sigma^2$ replaced by $L^2 + \gamma^2,$ and is thus omitted.  
\end{proof}

Using \Cref{lemma:samples-exp-Lip} to apply \Cref{thm:master}, we now get the following sample complexities.

\begin{corollary}\label{cor:exp-Lip}
     Let $\eps > 0$ and $\delta \in (0, 1)$ be given parameters and suppose that $\vtau(\vx; \vxi)$ satisfies \eqref{eq:stochastic-opt-assmpts}, \Cref{asspt:multi-query-oracle}, and \Cref{asspt:Lipschitz-in-expectation}. Let $D_L := \norm{\vx_* - \vx_0}+ \sigma + \sqrt{L^2 + \gamma^2}.$ Then, the total number of samples utilized by \Cref{algo:GHAL} invoked with $\gammab = \gamma$, $\eps_0 = \lambdab_0 = 1$, and $\beta = 1/2$ is
    \begin{equation}\label{eq:exp-Lip-sample-complexity-total}
    \begin{aligned}
        \widetilde\cO_{\kappa_\cE, \ln(1/\delta)}\Big(&\frac{D_L^2}{\eps^2} + \min\Big\{\frac{D_L^6}{\eps^3}, \, \frac{D_L^3}{(1-\gamma)^3}\Big\}\Big).
    \end{aligned}
    \end{equation}
\end{corollary}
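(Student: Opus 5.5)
The argument mirrors the proof of \Cref{cor:sample-complexity-gen}, with \Cref{lemma:samples-exp-Lip} in place of \Cref{lemma:mkj-general}. First, with $\gammab = \gamma$, $\eps_0 = \lambdab_0 = 1$, $\beta = 1/2$, and using $\|\mT(\vx_0) - \vx_0\| \leq 2\|\vx_0 - \vx_*\|$ together with the constants $a_0, a_1$ from \Cref{lemma:samples-exp-Lip}, we get $\max_{p} A_p = \cO_{\kappa_\cE, \ln(1/\delta)}(D_L)$. Then \Cref{thm:master} gives, on a single event of probability at least $1-\delta$, termination after $K = \log_2(8\max_p A_p/\eps)$ plus log-log and constant terms. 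Hence $2^K = \widetilde\cO(D_L/\eps)$. \Cref{lemma:MoME-calls-samples} bounds the MoME samples by $\widetilde\cO(D_L^2/\eps^2)$, which is the first term of \eqref{eq:exp-Lip-sample-complexity-total}.

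Next we bound the per-epoch inner cost. On the good event, the induction bound \eqref{eq:bnd-on-Rtk} gives $\Rt_{k-1} = \cO(\beta^{k}\mathrm{poly}(k)\max_p A_p)$, and $\eps_k = \beta^k$. Therefore $n_k = \cO(\mathrm{poly}(k) D_L \ln(1/\delta_k))$, and by \eqref{eq:Lip-exp--mkj}, $m_{k,j} = \cO(\mathrm{poly}(k) D_L^2/\max\{(1-\gamma)^2, \lambda_k^2\})$. The key difference from the bounded-variance case is that $m_{k,j}$ carries no $1/\eps_k^2$ factor, because $\Rt_{k-1}/\eps_k$ is polynomial in $k$. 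Since $1-\rho_k \geq \max\{1-\gamma, \lambda_k\}$, we have $J_k = \cO(\min\{1/\lambda_k, 1/(1-\gamma)\})$. Multiplying gives
\[
n_k\sum_{j=1}^{J_k} m_{k,j} = \widetilde\cO\big(D_L^3 \min\{\beta^{-3k}, (1-\gamma)^{-3}\}\big),
\]
using $\lambda_k = \Theta(\beta^k)$.

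Finally we sum over $k \leq K$. The $\beta^{-3k}$ branch gives $\widetilde\cO(D_L^3 \cdot 2^{3K}) = \widetilde\cO(D_L^6/\eps^3)$. The $(1-\gamma)^{-3}$ branch gives $K \cdot D_L^3/(1-\gamma)^3$, where $K = \cO(\log(D_L/\eps))$ is absorbed into the tilde notation. Taking the minimum of the two branches epoch by epoch, and bounding the sum of minima by the minimum of sums, yields \eqref{eq:exp-Lip-sample-complexity-total}.

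The main subtlety is that $m_{k,j}$ and $n_k$ depend on the random quantity $\Rt_{k-1}$. The bound on total samples therefore holds on the same event $E_K$ from \Cref{thm:master}, where \eqref{eq:bnd-on-Rtk} controls $\Rt_{k-1}$. Since that event has probability at least $1-\delta$, the complexity bound holds with the same probability; there is no need to bound it deterministically. The remaining step is routine: check that the polynomial factors in $k$ and the $\ln(1/\delta_k) = \ln(1/\delta) + \cO(k)$ terms contribute only polylogarithmic factors.
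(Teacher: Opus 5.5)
Your proposal is correct and follows essentially the same route as the paper: bound $\max_p A_p = \cO(D_L)$, charge the MoME calls via \Cref{lemma:MoME-calls-samples}, and use the induction bound on $\Rt_{k-1}$ to make $n_k$ and $m_{k,j}$ polynomial in $k$. You then multiply by $J_k = \cO(1/\max\{1-\gamma,\lambda_k\})$ to get a per-epoch cost $\widetilde\cO(D_L^3\min\{\beta^{-3k},(1-\gamma)^{-3}\})$ and sum over $k \le K$; your explicit remark that the sample count, which depends on the random $\Rt_{k-1}$, is only controlled on the event of \Cref{thm:master} is a point the paper leaves implicit.
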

\begin{proof}
Similar to \Cref{cor:sample-complexity-gen}, under the stated parameter setting, we have that $\max_{p \in \{0, 1, 2, 5/2\}}A_p = \cO_{\kappa_\cE, \ln(1/\delta)}(\|\vx_0 - \vx_*\| + \sigma + \sqrt{L^2 + \gamma^2}).$ Using \Cref{lemma:MoME-calls-samples} and \Cref{thm:master}, the total number of samples utilized by calls to MoME is $\widetilde{\cO}_{\kappa_\cE, \ln(1/\delta)}\Big(\frac{D_L^2}{\eps^2}\Big)$, thus it is absorbed by the bound stated in \eqref{eq:exp-Lip-sample-complexity-total}.     

Thus, in the rest of the proof, we focus on bounding the sample complexity of difference-based estimators, amounting to $\sum_{k=1}^K n_k\sum_{j=1}^{J_k}m_{k, j},$ where $K$ is the total number of epochs, which we have already bounded in \Cref{thm:master}. Recall that $K$ is equal to $\log_{1/\beta}(8\max_{p \in \{0, 1, 2, 5/2\}}A_p/\eps)$ plus lower-order (constant and log-log) terms. Thus, any polynomial dependence on $k \in [K]$ gets absorbed by the tilde-Oh notation in \eqref{eq:exp-Lip-sample-complexity-total}. 

By \Cref{thm:master}, we can conclude that $n_k = \cO(\mathrm{poly}(k)\max_{p \in \{0, 1, 2, 5/2\}}A_p) = \widetilde\cO_{\kappa_\cE, \ln(1/\delta)}((\|\vx_* - \vx_0\| + \sigma + \sqrt{L^2 + \gamma^2}) \ln(1/\epsilon))$. Additionally, since for all $k \geq 1,$ $J_k = \cO(\frac{1}{\max\{1-\gammab, \lambda_k\}}),$ we have, using \Cref{lemma:samples-exp-Lip}, that $\sum_{j=1}^{J_k}m_{k, j} = \cO\Big(\frac{(\|\vx_* - \vx_0\| + \sigma + \sqrt{L^2 + \gamma^2})^2 \mathrm{poly}(K)}{\max\{(1-\gammab)^3, \lambda_k^3\}}\Big)$. Observe that for $1 - \gammab = \Omega(\lambda_k),$ $\sum_{k=1}^K n_k\sum_{j=1}^{J_k}m_{k, j}$ is independent of $k$; otherwise it scales with $1/\beta^{3k}$ due to the $\lambda_k^3$-dependence. As before, it remains to use that $\sum_{k=1}^K \frac{1}{\beta^{3k}} = \cO\big(\big(\frac{1}{\beta}\big)^{3K}\big),$ leading to the claimed bound.  
\end{proof}
\subsection{Convergence under Other Variance Models}\label{sec:other-models}

Finally, we briefly comment on other possible variance models that could be plausibly addressed using the techniques from this work. 

First, if each sample is $\gamma$-Lipschitz, then the clipping operation in the definition of $\mTt_{k, j}$ is not needed (or, if employed, the clipping would never occur). As a result, the bound in \Cref{prop:variance-of-estimator} can be strengthened so that it does not include $\sum_{i=1}^j \sigma_{k, i}$, which came from bounding the clipping bias terms $\E[\cldel_{k, i}^{(\ell)}|\cF_{k, 0}^{(0)}] - \E[\Delta_{k, i}^{(\ell)}|\cF_{k, 0}^{(0)}]$ that in this case are zero. It is not hard to verify that in this case it suffices to take $m_{k, j} := \Big\lceil \frac{4\Rt_{k-1}^2}{\eps_k^2 (1-\rho_k)}\Big\rceil = \cO\Big(\frac{\max_{p \in\{0, 1, 2, 5/2\}}(A_p/\eps_0)^2 \mathrm{poly}(k)}{\max\{(1-\gammab), \lambda_k\}}\Big)$ to satisfy the requirement from \Cref{thm:master}, since $\sum_{i=1}^{\infty}\rho_k^{2i} = \cO(\frac{1}{1 - \rho_k}).$ The result is that the sample complexity corresponds to an improved version of what was obtained in \Cref{cor:exp-Lip}, with $\eps^3$ replaced by $\eps^2$, without needing \Cref{asspt:Lipschitz-in-expectation} and with constants dependent only on the samplewise Lipschitz parameter. In other words, the sample complexity scaling with $1/\eps$ improves to $1/\eps^2$ for nonexpansive operators (for contractive operators, \Cref{cor:exp-Lip} already gives such a scaling). This result is summarized in the following corollary, the proof of which is omitted for brevity (it is immediate based on the aforementioned observations).

\begin{corollary}\label{cor:samplewise-Lipschitz}
     Let $\eps > 0$ and $\delta \in (0, 1)$ be given parameters and suppose that $\vtau(\vx; \vxi)$ satisfies \eqref{eq:stochastic-opt-assmpts}, \Cref{asspt:multi-query-oracle}, and \Cref{asspt:samplewise-Lip}. Let $D := \norm{\vx_* - \vx_0}+ \sigma.$ Then, the total number of samples utilized by \Cref{algo:GHAL} invoked with $\gammab = \gamma$, $\eps_0 = \lambdab_0 = 1$, and $\beta = 1/2$ is
    \begin{equation}\label{eq:samplewise-Lip-sample-complexity-total}
    \begin{aligned}
        \widetilde\cO_{\kappa_\cE, \ln(1/\delta)}\Big(&\frac{D^2}{\eps^2} + \min\Big\{\frac{D^4}{\eps}, \, \frac{D^3}{(1-\gamma)}\Big\}\Big).
    \end{aligned}
    \end{equation}
\end{corollary}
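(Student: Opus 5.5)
The argument parallels \Cref{cor:exp-Lip}: first adapt \Cref{prop:variance-of-estimator} and \Cref{lemma:samples-exp-Lip} to the samplewise Lipschitz model, then feed the resulting minibatch sizes into \Cref{thm:master} and sum the per-epoch costs.

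\textbf{Step 1: no clipping bias.} Under \Cref{asspt:multi-query-oracle} and \Cref{asspt:samplewise-Lip} we use a common seed $\vxi_i^{(1)}=\vxi_i^{(2)}$. Then every summand of $\Delta_{k,j}^{(\ell)}$ has norm at most $\gamma\|\vy_j-\vy_{j-1}\|$, so their average does too. Since $\gammab=\gamma$, we get $\cldel_{k,j}^{(\ell)}=\Delta_{k,j}^{(\ell)}$ almost surely, and the bias term \eqref{eq:dif-of-exps-sum} vanishes. Re-running the proof of \Cref{prop:variance-of-estimator} then gives the same bound without the term $\sum_i\sigma_{k,i}$. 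Part 1 of \Cref{lemma:clipped-op-properties} still supplies the almost-sure increment bound $b_i$. For the variance, $\Delta$ is now unclipped, so we can use $\sigma_{k,i}^2$ directly, with no factor $9$. Consequently \Cref{asspt:variance-minibatch} only needs to control the term $3\sqrt{2\kappa_\cE\ln(4/\delta_k)\sum_i\sigma_{k,i}^2}$, and \Cref{thm:master} goes through verbatim.

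\textbf{Step 2: choice of $m_{k,j}$.} As in \eqref{eq:var-Lip-exp-1}, with $L$ replaced by $\gamma$, we have
\[
\sigma_{k,i}^2\le \frac{8\kappa_\cE\gamma^2\|\vy_i-\vy_{i-1}\|^2}{m_{k,i}}\le \frac{8\kappa_\cE\gamma^2\rho_k^{2(i-1)}\Rt_{k-1}^2}{m_{k,i}}.
\]
Take $m_{k,i}=\lceil 4\Rt_{k-1}^2/(\eps_k^2(1-\rho_k))\rceil$. Because $\sum_i\rho_k^{2(i-1)}\le 1/(1-\rho_k^2)\le 1/(1-\rho_k)$, this gives $\sum_i\sigma_{k,i}^2=\cO(\kappa_\cE\gamma^2\eps_k^2)$. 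The square root is taken of the sum of squares, not the sum of the $\sigma_{k,i}$; that is why one factor of $1-\rho_k$ suffices rather than the $(1-\rho_k)^2$ needed in \Cref{lemma:samples-exp-Lip}. We then read off $a_0,a_1$ in the same form as before, with $\sqrt{L^2+\gamma^2}$ replaced by $\cO(\gamma)\le\cO(1)$. As a result, $\max_pA_p=\cO_{\kappa_\cE,\ln(1/\delta)}(D)$.

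\textbf{Step 3: summing costs.} By \Cref{lemma:MoME-calls-samples}, the MoME calls cost $\widetilde\cO(D^2/\eps^2)$. For the difference estimators, $J_k=\cO(1/\max\{1-\gamma,\lambda_k\})$. Using \eqref{eq:ind-hypothesis}, $\Rt_{k-1}/\eps_k=\cO(D\,\mathrm{poly}(k))$, so
\[
n_k\sum_{j}m_{k,j}=\widetilde\cO\Big(\frac{D^3}{\max\{1-\gamma,\lambda_k\}^2}\Big).
\]
I would then sum over $k\le K$, where $K=\log_2(\cO(D)/\eps)$ plus lower-order terms. In the contractive branch each term is $D^3/(1-\gamma)^2$ up to polylogs. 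In the nonexpansive branch each term is $D^3\beta^{-2k}$, which sums to $D^3(D/\eps)^2$.

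\textbf{Main obstacle: matching the stated exponents.} This naive count gives $D^5/\eps^2$ and $D^3/(1-\gamma)^2$, whereas the claim is $D^4/\eps$ and $D^3/(1-\gamma)$. The discrepancy comes from multiplying $J_k$ by a per-step batch already scaled by $1/(1-\rho_k)$. The fix I would try first is to let the batch size decay with the step size: choose $m_{k,i}\propto \Rt_{k-1}^2\rho_k^{2(i-1)}/\eps_k^2$, times a factor making $\sum_i\sigma_{k,i}^2\lesssim\eps_k^2$ (e.g.\ $\propto (J_k-i+1)^{-2}$-type weights, or simply $\ge1$). Then $\sum_i m_{k,i}$ is $\cO(\Rt_{k-1}^2/(\eps_k^2(1-\rho_k)))+J_k$, i.e.\ one power of $1/(1-\rho_k)$ rather than two. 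This recovers $\min\{D^4/\eps,\,D^3/(1-\gamma)\}$ up to the extra $+J_k$ minimum-batch cost, and I would bound $n_kJ_k$ separately.
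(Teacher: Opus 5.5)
Your Steps 1--2 coincide with the paper's omitted argument. The paper likewise notes that with a common seed and $\gammab=\gamma$ the clipping never activates, so the bias sum $\sum_i\sigma_{k,i}$ drops out of \Cref{prop:variance-of-estimator}. It also takes exactly your $j$-independent batch $m_{k,j}=\lceil 4\Rt_{k-1}^2/(\eps_k^2(1-\rho_k))\rceil$. Your ``naive'' count in Step 3 is correct, and it is what this choice actually gives when summed as in the proof of \Cref{cor:exp-Lip}. Per epoch, $n_k\sum_{j\le J_k}m_{k,j}=\widetilde\cO(D^3/\max\{1-\gamma,\lambda_k\}^2)$, hence $\widetilde\cO\big(D^2/\eps^2+\min\{D^5/\eps^2,\,D^3/(1-\gamma)^2\}\big)$ in total. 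The exponents displayed in the corollary coincide with the order of $\sum_k n_k m_{k,1}$, i.e., what one gets by dropping the factor $J_k$; the paper keeps that factor in \Cref{cor:exp-Lip}. The paper's own prose (``$\eps^3$ replaced by $\eps^2$'', overall $1/\eps^2$ scaling) matches your count, not the display. So the discrepancy you found is real, and it appears to lie in the stated target rather than in your bookkeeping.

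The genuine gap is your proposed repair, which cannot work. Since $J_k$ is the smallest integer with $\rho_k^{J_k}\le\beta$, we have $\rho_k^{i-1}>\beta$ for every $i\le J_k$. So the a priori step bound $\rho_k^{i-1}\Rt_{k-1}$ shrinks by at most the constant factor $\beta$ within an epoch, and $m_{k,i}\propto\rho_k^{2(i-1)}$ is a constant batch up to a factor $\beta^2$. With multiplier one, each $\sigma_{k,i}^2$ is only bounded by order $\kappa_\cE\gamma^2\eps_k^2$, so your bound on $\sum_i\sigma_{k,i}^2$ is of order $J_k\eps_k^2$. Repairing this costs a factor $J_k$, and variance budgets $w_i\propto(J_k-i+1)^{-2}$ are worse: the total is then $\gtrsim\beta^2\Rt_{k-1}^2J_k^3/\eps_k^2$. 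In general, suppose $\sum_i\rho_k^{2(i-1)}\Rt_{k-1}^2/m_{k,i}\le c\,\eps_k^2$. Then by Cauchy--Schwarz
\[
\frac{(1-\beta)^2\Rt_{k-1}^2}{(1-\rho_k)^2}\le\Big(\sum_{i=1}^{J_k}\rho_k^{i-1}\Rt_{k-1}\Big)^2\le\Big(\sum_{i=1}^{J_k}\frac{\rho_k^{2(i-1)}\Rt_{k-1}^2}{m_{k,i}}\Big)\Big(\sum_{i=1}^{J_k}m_{k,i}\Big)\le c\,\eps_k^2\sum_{i=1}^{J_k}m_{k,i},
\]
so every admissible schedule has $\sum_i m_{k,i}\gtrsim\Rt_{k-1}^2/(\eps_k^2(1-\rho_k)^2)$, the same order as the constant one. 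Hence your proposal does not establish the displayed bound, and within this analysis no batch reallocation can. What your argument (and the paper's sketch) supports is the two-power bound above. That bound still yields the $\eps^{-2}$ rate for nonexpansive $\mT$, but gives $(1-\gamma)^{-2}$ rather than $(1-\gamma)^{-1}$ in the contractive branch.
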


Second, different ``unbounded variance'' models exist in the literature. For instance, models replacing the second-moment bound in \eqref{eq:stochastic-opt-assmpts} by
\begin{equation}\label{eq:unbounded-var-alpha}
    \E_{\vxi \sim \cD}[\|\vtau(\vx; \vxi) - \mT(\vx)\|^\alpha] \leq \sigma^\alpha
\end{equation}
for some $\alpha \in (1, 2)$, 
or
\begin{equation}\label{eq:unbounded-var-dist}
    \E_{\vxi \sim \cD}[\|\vtau(\vx; \vxi) - \mT(\vx)\|^2] \leq \sigma_0^2 + \sigma_1^2 \norm{\mT(\vx) - \mT(\vx_*)}^2.
\end{equation}
It seems plausible that our techniques can be adapted to both models, for the following reasons. First, for the model in \eqref{eq:unbounded-var-alpha}, there are recent results for mean estimation in Banach spaces that can be utilized in place of \Cref{prop:Banach-concentration}; see, for instance, \cite{whitehouse2024mean}. We expect the sample complexity to deteriorate in this case as $\alpha$ is decreased, due to the higher sample complexity of mean estimation appearing in such settings. Second, the model in \eqref{eq:unbounded-var-dist} appears similarly possible to address because our analysis already deals with bounding the distance of the iterates to the fixed point (see \Cref{lemma:basic-GHAL-inequalities}), which already had to be accounted for in the error budget in \Cref{thm:master} when bounding $\Rt_k.$  Thus, a more careful analysis additionally keeping track of terms resulting from the $\sigma_1^2 \norm{\mT(\vx) - \mT(\vx_*)}^2$ terms appears reachable. We leave such considerations to future work. 

\section{Conclusion}

We introduced a high-probability framework for solving stochastic fixed-point equations in the native norm of a quadratically smoothable Banach space. The proposed VR-GHAL method combines the gradual Halpern framework from \cite{diakonikolas2025pushing} with a recursive variance-reduced estimator built from Lipschitz-clipped stochastic differences. This clipping rule is tailored to fixed-point problems: by clipping at the scale dictated by the operator Lipschitz bound, the estimator preserves the pathwise stability needed by the gradual Halpern analysis, while Banach-space martingale concentration yields (poly-)logarithmic dependence on the failure probability under only finite second moments.

The resulting guarantees address three limitations of prior stochastic fixed-point methods simultaneously. They are high-probability rather than merely expectation-based, they measure both variance and contraction in the native norm rather than through Euclidean reductions, and they are anytime/parameter-free with respect to problem quantities such as the variance proxy and the distance to a fixed point. In the general bounded-variance setting, the method matches the best known polynomial dependence on the target accuracy up to logarithmic factors while strengthening the guarantee to high probability and extending the geometry beyond finite-dimensional Euclidean variance assumptions. Under additional oracle regularity, the rates improve, recovering $\eps^{-2}$-type behavior in the samplewise nonexpansive regime.

Several questions remain open. First, the dependence on the initial distance and variance proxy in the most general bounded-variance bound is likely not tight; improving the recursion for the epoch-start estimator may reduce the current higher-order dependence on $D = \norm{\vx_* - \vx_0} + \sigma$. Second, it would be valuable to establish matching lower bounds in native Banach-space geometries, especially for nonexpansive operators under finite second moments. Third, the same clipped-difference approach may extend to heavier-tailed oracle models with only finite $\alpha$-moments, state-dependent variance, or Markovian sampling. Finally, developing fully explicit implementations for reinforcement learning and scientific-computing fixed-point problems would help clarify when the native-norm high-probability theory leads to practical improvements over Euclidean variance-reduction methods.

\bibliographystyle{alpha}
\bibliography{references,addl-refs}

\appendix

\section{Omitted Proofs from \Cref{sec:prelims}}\label{appx:omitted-prelims}

In this section, we restate and prove auxiliary claims from \Cref{sec:prelims}.

\lemmaminibatchvar*
\begin{proof}
    Define $\mS_0 := 0,$ $\mS_i := \sum_{j=1}^i \veta_j,$ for $i \in [m].$ By \Cref{asspt:Banach}, there exists a compatible norm $\norm{\cdot}_+$ and constants $B, C \geq 1$ such that
    \begin{equation}\label{eq:sum-step}
        \norm{\mS_i}_+^2 \leq \norm{\mS_{i-1}}_+^2 + D(\norm{\cdot}_+^2)(\mS_{i-1})[\veta_i] + B\norm{\veta_i}_+^2.
    \end{equation}
    By definition, $\mS_{i-1}$ is $\cF_{i-1}$-measurable, $D(\norm{\cdot}_+^2)$ is continuous, and $\E[\veta_i|\cF_{i-1}] = \vzero$. Because $\|D(\norm{\cdot}_+^2)(\vx)\|_{+,*} = 2\norm{\vx}_+$, it is integrable. Thus, $\E[D(\norm{\cdot}_+^2)(\mS_{i-1})[\veta_i]|\cF_{i-1}] = 0.$ Taking conditional expectations w.r.t.\ $\cF_{i-1}$ on both sides of \eqref{eq:sum-step}, we thus have
    \begin{equation}\label{eq:iterated-sum-+}
        \E[\norm{\mS_i}_+^2|\cF_{i-1}] \leq \E[\norm{\mS_{i-1}}_+^2|\cF_{i-1}] + B\E[\norm{\veta_i}_+^2|\cF_{i-1}].
    \end{equation}
    By the tower property of expectation, 
    \[
        \E[\norm{\mS_i}_+^2|\cF_{0}] \leq \E[\norm{\mS_{i-1}}_+^2|\cF_{0}] + B\E[\norm{\veta_i}_+^2|\cF_{0}].
    \]
    Unrolling the recursion in the last inequality and using that $\norm{\cdot} \leq \norm{\cdot}_+ \leq C \norm{\cdot}$, we get
    \begin{align}
        \E[\norm{\mS_i}^2|\cF_{0}]\leq \E[\norm{\mS_i}_+^2|\cF_{0}] &\leq B\sum_{j=1}^i \E[\norm{\veta_j}_+^2|\cF_{0}]\notag\\
        &\leq BC^2\sum_{j=1}^i \E[\norm{\veta_j}^2|\cF_{0}]\notag\\
        &= \kappa_{\cE}\sum_{j=1}^i \E[\norm{\veta_j}^2|\cF_{0}]. \notag%
    \end{align}
    The first lemma statement now follows by taking $i = m$ and dividing both sides by $m^2.$  
    
    The remaining statement follows by plugging $\E[\norm{\veta_i}_+^2|\cF_{i-1}] \leq C^2 \E[\norm{\veta_i}^2|\cF_{i-1}] \leq C^2 \sigma_i^2$ into \eqref{eq:iterated-sum-+} and repeating the same argument.
\end{proof}

\propbanachconc*
\begin{proof}
    Because $(\cE, \norm{\cdot})$ is quadratically smoothable, there exists a compatible norm $\norm{\cdot}_+$ and the associated constants $B, C \geq 1,$ $\kappa_\cE:= BC^2$, such that \Cref{asspt:Banach} ensures that the space $(\cE, \norm{\cdot}_+)$ is $(2, \kappa_\cE)$-smooth. Fix such a norm $\norm{\cdot}_+$ and the associated constants. Assume that $b_i > 0;$ otherwise the corresponding summand $\veta_i$ is a zero element of $\cE$ almost surely, and can thus be omitted from the sum $\mS_m$ (if all $b_i = 0$, the claimed bound holds trivially). Throughout the proof, we condition on $\cF_0,$ so the parameters $b_i, \sigma_i^2$ are fixed. This conditioning is w.l.o.g., as it suffices to use the tower property of expectation at the end to reach the claimed unconditional statement.  
    
     The main idea in the proof, following \cite{pinelis1994optimum},\footnote{Here, we make use of the observation that $(\cE, \norm{\cdot}_+)$ defines a separable smooth Banach space, so the arguments of \cite{pinelis1994optimum} apply. Then, it suffices to use compatibility of $\norm{\cdot}, \norm{\cdot}_+$ from \Cref{asspt:Banach} to translate the results to $\norm{\cdot}.$ A similar idea is also used in \cite{juditsky2008large}.} is to fix an arbitrary $\rho > 0$ and construct a non-negative supermartingale $M_0, M_1, \dots, M_n$ defined by $M_0 = 1$ and $M_m := \cosh(\rho\norm{ S_m}_+) \exp\big(-A_m\big)$ for a suitable choice of a sequence $A_m$. An application of Ville's inequality, stated below, then leads to a time-uniform bound on $M_m, 0\leq m \leq n$, which can then be translated into the claimed bound from the proposition statement.
\begin{fact}[Ville's Inequality \cite{ville1939etude}]\label{fact:Ville}
    Let $M_0, M_1, \dots, M_n$ be a non-negative supermartingale. Then, for any $t > 0,$ 
    \begin{equation}\notag
        \prob[\sup_{0\leq m \leq n}M_m \geq t] \leq \frac{\E[M_0]}{t}. 
    \end{equation}
\end{fact}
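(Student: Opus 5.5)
The plan is the standard stopping-time argument; since the horizon is finite, only the elementary optional stopping theorem for bounded stopping times is needed. Fix $t>0$ and define
\[
\tau := \min\{m \in \{0,1,\dots,n\} : M_m \geq t\},
\]
with the convention $\tau := n$ if no such $m$ exists. Then $\tau$ is a stopping time with respect to the filtration to which $(M_m)$ is adapted, since $\{\tau = m\}$ is determined by $M_0,\dots,M_m$. It is bounded by $n$. The event of interest satisfies $\{\sup_{0\le m\le n} M_m \geq t\} = \{M_\tau \geq t\}$: if the supremum (a maximum over finitely many indices) reaches $t$, then $\tau$ is the first index where this happens; conversely, $M_\tau \ge t$ trivially forces the supremum to be at least $t$.

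Next I would show that $\E[M_\tau] \le \E[M_0]$. I would do this directly rather than cite optional stopping, to keep the argument self-contained. Write
\[
M_{\tau} = M_0 + \sum_{m=1}^{n} \one\{\tau \geq m\}(M_m - M_{m-1}).
\]
The indicator $\one\{\tau\ge m\} = 1 - \one\{\tau \le m-1\}$ is $\cF_{m-1}$-measurable and nonnegative. The supermartingale property $\E[M_m - M_{m-1}\mid \cF_{m-1}] \le 0$ together with the tower property then gives $\E[\one\{\tau\ge m\}(M_m-M_{m-1})] \le 0$ for each $m$. Summing over $m$ yields $\E[M_\tau]\le \E[M_0]$. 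Integrability is not an issue, because each $M_m$ is integrable as part of the definition of a supermartingale, and the sum has finitely many terms.

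Finally, nonnegativity of $M_\tau$ lets me apply Markov's inequality:
\[
\prob\Big[\sup_{0\le m\le n} M_m \ge t\Big] = \prob[M_\tau \ge t] \le \frac{\E[M_\tau]}{t} \le \frac{\E[M_0]}{t}.
\]
Equivalently, one can write $t\,\one\{M_\tau\ge t\}\le M_\tau$ and take expectations. Nonnegativity is exactly what makes this step valid.

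There is no real obstacle in this argument. The only point requiring care is the middle step, namely verifying $\E[M_\tau]\le\E[M_0]$ through the predictable-indicator decomposition, including checking that $\{\tau\ge m\}\in\cF_{m-1}$. In the application within the proof of \Cref{prop:Banach-concentration}, everything is conditional on $\cF_0$. The same argument then goes through verbatim with conditional expectations given $\cF_0$, and since $M_0=1$ it yields the bound $1/t$.
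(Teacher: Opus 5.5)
Your proof is correct. The first-passage time capped at $n$ is a bounded stopping time, and the predictable-indicator decomposition gives $\E[M_\tau]\le \E[M_0]$; Markov's inequality applied to the nonnegative $M_\tau$ then closes the argument. Your remark about conditioning on $\cF_0$ is also sound.

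The paper gives no proof of this fact; it only cites Ville (1939). Your argument is the standard finite-horizon proof, and that is all the paper needs. The supermartingale in the proof of \Cref{prop:Banach-concentration} is defined only up to index $n$, so its ``$\sup_{m\ge 0}$'' there is really a maximum over $0\le m\le n$. The usual infinite-horizon form of Ville's inequality would in any case follow from yours by letting $n\to\infty$ and using continuity of probability from below.
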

    Following similar arguments as in \cite{pinelis1994optimum,whitehouse2024mean}, we first bound $\E[\cosh(\rho\norm{ \mS_m}_+)|\cF_{m-1}]$ by bounding $\phi(\theta) := \E[\cosh(\rho \norm{S_{m-1} + \theta \veta_m}_+)| \cF_{m-1}]$, $\phi: [0, 1] \to [1, \infty)$, noting that $\phi(1) = \E[\cosh(\rho\norm{ \mS_m}_+)|\cF_{m-1}]$. As argued in \cite{pinelis1994optimum}, although the norm $\norm{\cdot}_+$ (and thus also $\phi$) is not  necessarily twice differentiable, it can effectively be treated as everywhere Fr\'echet-differentiable with Fr\'echet derivatives of arbitrary order, using an appropriate Gaussian smoothing with diminishing smoothing radius; see \cite[Lemma 2.2 and Remark 2.4]{pinelis1994optimum}. In particular, the argument used in proving \cite[Theorem 3.2]{pinelis1994optimum} (see also the proof of \cite[Proposition 3.3]{whitehouse2024mean}) implies that $\phi'(0) = 0$ and %
    \begin{align*}
        \phi''(\theta) &\leq \rho^2B \E[\norm{\veta_m}_+^2 \cosh(\rho \norm{\mS_{m-1} + \theta \veta_m}_+)|\cF_{m-1}]\\
        & \leq \rho^2 B \E[\norm{\veta_m}_+^2 \cosh(\rho\norm{\mS_{m-1}}_+)e^{\rho \theta \norm{\veta_m}_+}|\cF_{m-1}]\\
        &\leq \rho^2 B \cosh(\rho\norm{\mS_{m-1}}_+) e^{C b_m \rho \theta}  C^2 \sigma_m^2,
    \end{align*}
    where the last inequality follows from $\norm{\veta_m}_+ \leq C \norm{\veta_m} \leq C b_m$ and $\E[\norm{\veta_m}_+^2|\cF_{m-1}] \leq C^2\E[\norm{\veta_m}^2|\cF_{m-1}] \leq C^2\sigma_m^2$, which both hold by assumption. As a result, applying the second-order Taylor expansion to $\phi$, we have
    \begin{align}
        \E[\cosh(\rho\norm{ \mS_m}_+)|\cF_{m-1}] &= \phi(1) = \phi(0) + \phi'(0) + \int_0^1(1-\theta)\phi''(\theta)\dd \theta \notag\\
        &\leq \cosh(\rho\norm{\mS_{m-1}}_+)\big(1 + \rho^2 B C^2\sigma_m^2 \int_0^1 (1- \theta) e^{C b_m \rho \theta} \dd \theta\big) \notag\\
        &\leq \cosh(\rho\norm{\mS_{m-1}}_+)\Big(1 + \rho^2 \kappa_{\cE} \sigma_m^2  \frac{C e^{b_m \rho} - C b_m \rho - 1}{(C b_m \rho)^2}\Big) \notag\\
        &\leq \cosh(\rho\norm{\mS_{m-1}}_+)\exp\Big( \kappa_\cE (\sigma_m/b_m')^2  \big({e^{b_m' \rho} - b_m' \rho - 1}\big)\Big), \label{eq:nonneg-supm}
    \end{align}
    where we used that $\kappa_\cE = BC^2,$ $\int_0^1(1-\theta)e^{a \theta} \dd \theta = \frac{e^a - a - 1}{a^2}$, and $e^x \geq 1 + x$, $\forall x \in \sR$, and we denoted for simplicity $b_m' := C b_m$. 
    
    Define $A_0 := 0$, $A_m := \kappa_\cE \sum_{i=1}^m (\sigma_i/b_i')^2  ({e^{b_i' \rho} - b_i' \rho - 1})$. A rearrangement of \eqref{eq:nonneg-supm} now implies that \[M_m := \cosh(\rho\norm{ \mS_m}_+) \exp\big(-A_m\big)\] is a non-negative supermartingale with initial value $M_0 = 1$. Thus, by \Cref{fact:Ville}, $\prob[\sup_{m \geq 0}M_m \geq t] \leq \frac{1}{t},$ for any $t > 0.$ Since $\cosh(\rho\norm{ \mS_m}_+) \geq \frac{1}{2}e^{\rho\norm{ \mS_m}_+}$ and $\norm{\mS_m} \leq \norm{\mS_m}_+,$ we get, as a consequence, that for any $t > 0$ and any fixed $n \geq 1,$
    \begin{equation}
        \prob\Big[\sup_{1 \leq m \leq n} e^{\rho\norm{ \mS_m} -A_m } \geq 2 t\Big] \leq \frac{1}{t}.
    \end{equation}
    In particular, setting $t = 1/\delta$, we have that  with probability $1-\delta,$ for all $m \in  [ n],$ 
    \begin{align*}
        \norm{\mS_m} &\leq \frac{A_m + \ln(2/\delta)}{\rho}\\
        &\leq \frac{\kappa_\cE\sum_{i=1}^m(\sigma_i/b_i')^2(e^{b_i' \rho} - b_i' \rho - 1) + \ln(2/\delta)}{\rho}.
    \end{align*}
    To simplify the last inequality, we use the standard derivation common to Bernstein-style inequalities; see, for example, the proof of Proposition 2.1.4 in \cite{wainwright2019high}. Note first that $h(t) = \frac{e^t - t - 1}{t^2}$ is increasing in $t$. As a consequence, $\frac{e^{b_i' \rho} - b_i' \rho - 1}{{(b_i')}^2} \leq \frac{e^{\norm{\vb_{1:n}'}_{\infty} \rho} - \norm{\vb_{1:n}'}_{\infty} \rho - 1}{\norm{\vb_{1:n}'}_{\infty}^2}$, where $\norm{\vb_{1:n}'}_{\infty} = C \norm{\vb_{1:n}}_{\infty},$ and the above inequality simplifies to
    \begin{align*}
        \norm{\mS_m} 
        &\leq \frac{\kappa_\cE\sum_{i=1}^m(\sigma_i/\norm{\vb_{1:n}'}_{\infty})^2(e^{\norm{\vb_{1:n}'}_{\infty} \rho} - \norm{\vb_{1:n}'}_{\infty} \rho - 1) + \ln(2/\delta)}{\rho}.
    \end{align*}
   Further, for $\rho \norm{\vb_{1:n}'}_{\infty} = t <3$, we have that $h(t) \leq \frac{1}{2}(1- t/3)^{-1} = \frac{1}{2(1-\rho \norm{\vb_{1:n}'}_{\infty}/3)},$ and so we have
   \begin{align*}
       \norm{\mS_m} 
        &\leq \frac{\kappa_\cE\sum_{i=1}^m\frac{{\sigma_i}^2 \rho^2}{2(1-\rho \norm{\vb_{1:n}'}_{\infty}/3)} + \ln(2/\delta)}{\rho}\\
        &\leq \frac{\kappa_\cE\sum_{i=1}^n\frac{{\sigma_i}^2 \rho^2}{2(1-\rho \norm{\vb_{1:n}'}_{\infty}/3)} + \ln(2/\delta)}{\rho}\\
        &= \frac{\kappa_\cE \norm{\vsigma}_2^2 \rho}{2(1-\rho \norm{\vb_{1:n}'}_{\infty}/3)} + \frac{\ln(2/\delta)}{\rho},
   \end{align*}
   where, to simplify the notation, we denoted $\norm{\vsigma}_2^2 := \sum_{i=1}^n \sigma_i^2.$ 
   It remains to choose $\rho < 3/\norm{\vb_{1:n}'}_{\infty}$ and simplify the above expression. If $\norm{\vsigma}_2^2 = 0,$ then the claimed bound on $\norm{\mS_m}$ holds trivially, so assume this is not the case. Choosing $\rho = \frac{\sqrt{2\ln(2/\delta)/(\kappa_\cE \norm{\vsigma}_2^2)}}{1 + (\norm{\vb_{1:n}'}_{\infty}/3)\sqrt{2\ln(2/\delta)/(\kappa_\cE \norm{\vsigma}_2^2)}} < 3/\norm{\vb_{1:n}'}_{\infty},$ plugging into the above inequality, and simplifying, we get
   \begin{align*}
       \norm{\mS_m} 
        &\leq \sqrt{2\kappa_\cE \norm{\vsigma}_2^2 \ln(2/\delta)} + \frac{\ln(2/\delta) \norm{\vb_{1:n}'}_{\infty}}{3},
   \end{align*}
   completing the proof.
\end{proof}

\section{Omitted Proofs from \Cref{sec:VR}}\label{appx:omitted-clipping}

This section restates and proves \Cref{lemma:clipped-op-properties}. 

\lemmaclipping*
\begin{proof}
    Part 1 is immediate by the triangle inequality and, since, by \eqref{eq:clipped-diff-op}, $\|\cldel(\vx, \vy)\| \leq \gammab\|\vx - \vy\|$.

    For Parts 2 and 3, define $\chi = \one\{\|\Delta(\vx, \vy)\| > \gammab\|\vx - \vy\|\}$---this is the indicator of the event that the clipping operation has any effect on $\Delta(\vx, \vy)$, in which case $\Delta(\vx, \vy)$ gets truncated to a vector of length $\gammab\|\vx - \vy\|$ while not changing its direction. Observe that %
    \begin{align}
        \cldel(\vx, \vy) &= \Delta(\vx, \vy)(1 - \chi) + \frac{\gammab \|\vx - \vy\|}{\|\Delta(\vx, \vy)\|}\Delta(\vx, \vy) \chi\notag\\
        &= \Delta(\vx, \vy) + \Big(\frac{\gammab \|\vx - \vy\|}{\|\Delta(\vx, \vy)\|} - 1 \Big)\Delta(\vx, \vy) \chi. \notag
    \end{align}
    As a consequence,
    \begin{equation}\label{eq:clipped-minus-exp}
        \cldel(\vx, \vy) - \Delta(\vx, \vy) =   \Big(\frac{\gammab \|\vx - \vy\|}{\|\Delta(\vx, \vy)\|} - 1 \Big)\Delta(\vx, \vy) \chi.
    \end{equation}
    Observe that, by the definition of $\chi,$ we have
    \begin{align}
        \Big\|\Big(\frac{\gammab \|\vx - \vy\|}{\|\Delta(\vx, \vy)\|} - 1 \Big)\Delta(\vx, \vy) \chi\Big\| &= \Big(1 - \frac{\gammab \|\vx - \vy\|}{\|\Delta(\vx, \vy)\|} \Big)\|\Delta(\vx, \vy)\|\chi\notag\\
        &= \big(\|\Delta(\vx, \vy)\| - \gammab\|\vx - \vy\|\big)\chi \notag\\
        &\stackrel{(i)}{\leq} \big(\|\Delta(\vx, \vy)\| - \|\E[\Delta(\vx, \vy)]\|\big)\chi\notag\\
        &\stackrel{(ii)}{\leq} \|\Delta(\vx, \vy) - \E[\Delta(\vx, \vy)]\|\chi, \label{eq:clipped-portion-of-delta}
    \end{align}
    where $(i)$ holds because $\E[\Delta(\vx, \vy)]$ is $\gamma$-Lipschitz and $\gamma \leq \gammab,$ thus $\gammab\|\vx - \vy\| \geq \|\E[\Delta(\vx, \vy)]\|$, and $(ii)$ is by the triangle inequality (a rearrangement of $\|\Delta(\vx, \vy)\| \leq \|\E[\Delta(\vx, \vy)]\| + \|\Delta(\vx, \vy) - \E[\Delta(\vx, \vy)]\|$). %

    To prove Part 2 of the lemma, %
    notice that taking expectations on both sides of \eqref{eq:clipped-minus-exp}, we get $\E[\cldel(\vx, \vy)] - \E[\Delta(\vx, \vy)] = \E\big[ \big(\frac{\gammab \|\vx - \vy\|}{\|\Delta(\vx, \vy)\|} - 1 \big)\Delta(\vx, \vy) \chi\big].$ Thus, using \eqref{eq:clipped-portion-of-delta}, we have
    \begin{align}
        \|\E[\cldel(\vx, \vy)] - \E[\Delta(\vx, \vy)]\| &\leq  \Big\|\E\Big[\Big(\frac{\gammab \|\vx - \vy\|}{\|\Delta(\vx, \vy)\|} - 1 \Big)\Delta(\vx, \vy) \chi\Big]\Big\|\notag\\
        &\leq \E\Big[\Big\|\Big(\frac{\gammab \|\vx - \vy\|}{\|\Delta(\vx, \vy)\|} - 1 \Big)\Delta(\vx, \vy) \chi\Big\|\Big]\notag\\
        &\stackrel{(i)}{\leq} \E\big[\|\Delta(\vx, \vy) - \E[\Delta(\vx, \vy)]\|\chi\big]\notag\\
        &\stackrel{(ii)}{\leq} \sqrt{\E\big[\|\Delta(\vx, \vy) - \E[\Delta(\vx, \vy)]\|^2]\E[\chi^2]}\notag\\
       &\stackrel{(iii)}{\leq} \sigma_{\vx\vy}, %
       \label{eq:diff-of-expectations}
    \end{align}
    where $(i)$ is by \eqref{eq:clipped-portion-of-delta}, $(ii)$ is by Cauchy-Schwarz inequality, and $(iii)$ is by $\chi^2 \leq 1.$ %
    This completes the proof of Part 2.

    Further, using \eqref{eq:clipped-minus-exp}, squaring both sides of \eqref{eq:clipped-portion-of-delta} and taking expectations on both sides, we also have
    \begin{align}
        \E\big[\|\cldel(\vx, \vy) - \Delta(\vx, \vy)\|^2\big] &\leq \E\big[\|\Delta(\vx, \vy) - \E[\Delta(\vx, \vy)]\|^2\chi^2\big]\notag\\
        &\leq \E\big[\|\Delta(\vx, \vy) - \E[\Delta(\vx, \vy)]\|^2\big]\notag\\
        &\leq \sigma_{\vx \vy}^2. \label{eq:exp-cldel-del-squared}
    \end{align}

    For Part 3, %
    by Young's inequality,
    \begin{align*}
        \|\cldel(\vx, \vy) - \E[\cldel(\vx, \vy)]\|^2 \leq  3\big(&\|\cldel(\vx, \vy) - \Delta(\vx, \vy) \|^2 + \|\Delta(\vx, \vy) - \E[\Delta(\vx, \vy)]\|^2\\
        &+ \|\E[\Delta(\vx, \vy)]- \E[\cldel(\vx, \vy)]\|^2\big).
    \end{align*}
    Taking expectations on both sides of last inequality and using \eqref{eq:diff-of-expectations}, \eqref{eq:exp-cldel-del-squared}, and the lemma assumptions, we get that each summand is bounded by $\sigma_{\vx \vy}^2,$ completing the proof of Part 3. 
\end{proof}

\end{document}